\undefined \DeclareGraphicsRule{*}{eps}{*}{} \else
\newcommand {\cross } {\nu_D }
\newtheorem{definition}{Definition}
\newtheorem{lemma}{Lemma}
\newtheorem{theorem}{Theorem}
\newtheorem{proposition}{Proposition}
\newtheorem{observation}{Observation}
\numberwithin{figure}{section} \numberwithin{definition}{section}
\numberwithin{observation}{section} \numberwithin{lemma}{section}
\numberwithin{theorem}{section} \numberwithin{proposition}{section}
\numberwithin{conjecture}{section} \numberwithin{table}{section}
\DeclareSymbolFont{largesymbol}{OMX}{yhex}{m}{n}
\DeclareMathAccent{\Widehat}{\mathord}{largesymbol}{"62}
\begin{document}

\title{
{An upper bound for the crossing number of augmented cubes} \footnote{The
research is supported by NSFC (11001035, 60973014, 60803034) and
SRFDP (200801081017)}
\author{
Guoqing Wang$^a$, \ \ Haoli
Wang$^b$, \ \ Yuansheng
Yang$^{b,}$\thanks{Corresponding
author's E-mail : yangys@dlut.edu.cn}, \ \ Xuezhi Yang$^b$, \ \ Wenping Zheng$^c$\\
\\
$^a$Department of Mathematics,\\
Tianjin Polytechnic University, Tianjin 300387, China\\
\\
\\
$^b$Department of Computer Science, \\
Dalian University of Technology, Dalian 116024, China\\
\\
\\
$^c$Key Laboratory of Computational Intelligence and Chinese Information\\
Processing of Ministry of Education,\\
Shanxi University, Taiyuan 030006, China\\
}}

\date{}
\maketitle
\begin{abstract}

A {\it good drawing} of a graph $G$ is a drawing where the edges are
non-self-intersecting and each two edges have at most one point in
common, which is either a common end vertex or a crossing. The {\it
crossing number} of a graph $G$ is the minimum number of pairwise
intersections of edges in a good drawing of $G$ in the plane. The
{\it $n$-dimensional augmented cube} $AQ_n$, proposed by S.A. Choudum and V. Sunitha, is an important interconnection network with good
topological properties and applications. In this paper, we
obtain an upper bound on the crossing number of $AQ_n$ less than
$\frac{26}{32}4^{n}-(2n^2+\frac{7}{2}n-6)2^{n-2}$.

\bigskip

\noindent {\bf Keywords:} {\it Drawing}; {\it Crossing number}; {\it
Augmented cube}; {\it Hypercube}; {\it Interconnection network}
\end{abstract}

\section{Introduction}

Let $G$ be a simple connected graph with vertex set $V(G)$ and edge
set $E(G)$. The {\it crossing number} $cr(G)$ of a graph $G$ is the
minimum possible number of edge crossings in a drawing of $G$ in the
plane. The notion of crossing number is a central one for
Topological Graph Theory and has been studied extensively  by
mathematicians including Erd\H{o}s, Guy, Tur\'{a}n and Tutte, et al.
(see \cite{EG73,Guy60,Turan77,Tutte70}). The study of {\it crossing
number} not only is of theoretical importance, but also has many
applications in, for example,  VLSI theory and wiring layout
problems (see \cite{BL84,S05,L81,L83}).

However, the investigation on the crossing number of graphs is an
extremely difficult problem. In 1973, Erd\H{o}s and Guy \cite{EG73}
wrote, ``{\sl Almost all questions that one can ask about crossing
numbers remain unsolved.}'' Actually, Garey and Johnson \cite{GJ83}
proved that computing the crossing number is NP-complete. Not
surprisingly, there are only a few infinite families of graphs for
which the exact crossing numbers are known (see for example
\cite{LiYaZh,PanRich07,RichterThomassen95}). Therefore,
it is more practical to determine the upper and lower bounds of the
crossing number of a graph. In particular, the bounds of crossing
number of some popular parallel network topologies with good
topological properties and applications in VLSI theory would be of
theoretical importance and practical value. Among all the network
topologies, the hypercube $Q_n$ is one of the most popular
interconnection network because of its attractive properties, such
as strong connectivity, small diameter, symmetry, recursive
construction, relatively small degree, and regularity
\cite{BA84,L92}. Naturally, the crossing number of hypercubes has
attracted many researches for the past several decades (see
\cite{DR95,Egg70,FF00,FFSV08,M91,SV93}).

Concerned with upper bound of crossing number of hypercube, Eggleton
and  Guy \cite{Egg70} in 1970 established a drawing of $Q_n$ to show
\begin{equation}\label{equation conjeture of Erdos and Guy} {\rm
cr}(Q_n)\leq \frac{5}{32}4^n-\lfloor\frac{n^2+1}{2}\rfloor 2^{n-2}.
\end{equation}
However, a gap was found in their constructions. Erd\H{o}s and Guy
\cite{EG73} in 1973 stated the above inequality again as a
conjecture. In fact, Erd\H{o}s and Guy further conjectured the
\emph{equality} of \eqref{equation conjeture of Erdos and Guy}
holds. With
regard to the latest progress of this conjecture, the interested
readers are referred to \cite{FF00,FFSV08,YangWang}.

The {\it $n$-dimensional augmented
cube} $AQ_n$ proposed by S.A. Choudum and V. Sunitha \cite{CS02} in
2002 is an important variation of $Q_n$. The augmented cube not
only retains some favorable properties of $Q_n$ but also processes
some embedding properties that $Q_n$ does not
\cite{HCTH05,HLT07,MLX07}. Thus, it has drawn a great deal of
attention of research \cite{CCWH09,CH12,HH12,HC10,HS07,WMX07,LTTH09,XX07}.
Hence, to determine the bounds of the crossing number of the {\it
$n$-dimensional augmented cube} $AQ_n$ would be highly interesting.

Our main result in this paper is Theorem \ref{Theorem
Upper Bound for general n}, which gives a general upper bound of
$cr(AQ_n)$.

\begin{theorem}\label{Theorem Upper Bound for general n}
For $n\geq 8$,
$$cr(AQ_n)<\frac{26}{32}4^{n}-(2n^2+\frac{7}{2}n-6)2^{n-2}.$$
\end{theorem}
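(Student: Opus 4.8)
The plan is to exploit the recursive self-similar structure of the augmented cube: build a good drawing of $AQ_n$ inductively from a good drawing of $AQ_{n-1}$, and then track the number of crossings through a recurrence whose solution produces the stated bound. First I would fix the standard decomposition. Writing each vertex as a binary string $x_nx_{n-1}\cdots x_1$, the subgraphs induced by $x_n=0$ and by $x_n=1$ are two disjoint copies of $AQ_{n-1}$, and the remaining edges form exactly two perfect matchings between the copies: the \emph{identity matching} $M_I$ joining $0w$ to $1w$ (the dimension-$n$ hypercube edges) and the \emph{complement matching} $M_C$ joining $0w$ to $1\overline{w}$ (the dimension-$n$ augmenting edges). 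Thus $AQ_n=AQ_{n-1}^0\cup AQ_{n-1}^1\cup M_I\cup M_C$. Since the hypercube $Q_n$ sits inside $AQ_n$ as the subgraph of all single-bit-flip edges, I can organize the construction so that the $Q_n$-part reuses an Eggleton--Guy-type layout achieving the coefficient $\tfrac{5}{32}$ in \eqref{equation conjeture of Erdos and Guy}, and then account separately for the augmenting edges.

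Next I would specify the explicit layout. I would place a good drawing $D_{n-1}$ of one copy together with a mirror image of it for the second copy, positioned so that the two vertex sets are ``aligned'' and the identity matching $M_I$ can be drawn as short, nearly parallel arcs incurring few crossings. The delicate part is routing the complement matching $M_C$: because $0w$ and $1\overline{w}$ differ in every one of the lower $n-1$ coordinates, these edges are ``long'' in the layout and must sweep across both copies. I would route them in a dedicated channel, in the reverse order dictated by the complementation, so that the $M_C$-edges cross each other in a controlled, countable pattern and cross only a predictable set of edges of the two copies.

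With the drawing fixed, I would set up the crossing recurrence $cr(D_n)\le 2\,cr(D_{n-1})+C(n)$, where $C(n)$ collects all new crossings: $M_I$ with $M_I$, $M_C$ with $M_C$, $M_I$ with $M_C$, and each matching with the edges of the two copies. The dominant contribution is of order $4^n$ and comes from $M_C$, and the bookkeeping must be arranged so that $C(n)$ has leading term $\tfrac{13}{32}4^n$; solving the linear recurrence (whose homogeneous solution is $O(2^n)$) then yields the leading coefficient $2\cdot\tfrac{13}{32}=\tfrac{26}{32}$, while the accumulated lower-order savings assemble into the subtracted polynomial $(2n^2+\tfrac72 n-6)2^{n-2}$. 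A base case, which forces the hypothesis $n\ge 8$, would be checked by exhibiting an explicit drawing of $AQ_8$ built from smaller augmented cubes and verifying the inequality there directly.

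The main obstacle is the exact combinatorial count of the crossings created by the complement matching $M_C$. Unlike the hypercube case, where only the identity matching must be inserted, here each augmenting edge traverses almost the entire layout, so it can cross many edges of both copies as well as every other complement edge. Obtaining the constant $\tfrac{26}{32}$, rather than a weaker bound, requires choosing the routing of $M_C$ so that these crossings are minimized and then counting them exactly, classifying the pairs of edges by how their endpoints interleave under complementation. Ensuring that the count is tight enough at every scale, so that the lower-order terms aggregate to precisely $(2n^2+\tfrac72 n-6)2^{n-2}$, is where the heaviest calculation lies.
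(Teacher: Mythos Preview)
Your high-level strategy is natural, but it diverges from the paper's proof and, more importantly, leaves the decisive step unjustified.

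\textbf{The gap.} Your entire argument hinges on the assertion that the new-crossing term $C(n)$ can be arranged to have leading coefficient $\tfrac{13}{32}$. But you obtain that number by reading it off the target bound and dividing by $2$, not by exhibiting a drawing and counting. ``Place a mirror image, route $M_I$ as short nearly-parallel arcs, route $M_C$ in a dedicated channel in reverse order'' does not pin down a drawing precisely enough to compute anything; in particular you never specify what $D_{n-1}$ looks like internally, so the interaction of $M_C$ with the edges of the two copies is completely uncontrolled. The honest sentence at the end of your proposal (``this is where the heaviest calculation lies'') is exactly the whole proof: without it you have only a recurrence template, not a bound.

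\textbf{How the paper actually proceeds.} The paper does \emph{not} use the two-copy doubling recursion $AQ_n=AQ_{n-1}^0\cup AQ_{n-1}^1\cup M_I\cup M_C$ that you propose. Instead it builds, for every $n\ge 8$, an explicit global drawing $\Gamma_n$ from eight isomorphic ``linear'' blocks: the vertex set is partitioned into $U_1^n,\dots,U_4^n,V_1^n,\dots,V_4^n$, and on each block the induced subgraph is drawn as a fixed auxiliary drawing $\Upsilon_{(n-5)}$ (vertices on a line, edges as semicircles, governed by an inductive rule proved in Lemmas~\ref{Lemma enlarging the edges}--\ref{Lemma crossing of E-black}). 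Edges are classified not as $M_I$ versus $M_C$ but by dimension into three colour classes $E_{black}^n,E_{red}^n,E_{blue}^n$, and the total crossing number is obtained as an \emph{exact} sum of six closed-form pieces $\nu(E_{blue}),\nu(E_{red}),\nu(E_{black})$ and the three pairwise interactions, each evaluated via the precise combinatorics of $\Upsilon_{(m)}$. The leading constant $\tfrac{26}{32}$ falls out only after this exact calculation; it is not produced by a single step of a doubling recurrence. The hypothesis $n\ge 8$ arises because the construction references the base drawing $\Gamma_8$ explicitly (e.g.\ routing rules indexed by $\lceil j/2^{n-8}\rceil$), not because $AQ_8$ serves as the base case of your recursion.

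In short: your outline is a reasonable heuristic for why a bound of the shape $c\cdot 4^n - (\text{poly}(n))2^n$ should exist, but it does not supply the drawing or the count that determines $c=\tfrac{26}{32}$, and the paper achieves that constant by a substantially different, non-recursive-at-the-top-level construction.
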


\section{Definitions and tools}

Let $\mathbb{R}$ be the set of real numbers. For any real numbers
$r<s$, let $\mathbb{R}_{r}^s=\{t\in \mathbb{R}: r\leq t\leq s \}.$
For integers $c,d\in \mathbb{Z}$ we set $[c,d]=\{t\in \mathbb{Z}:
c\leq t\leq d\}$. Let $G$ be a graph, and let $A$ and $B$ be sets of
vertices (not necessarily disjoint) of $G$. We denote by $E[A, B]$
the set of edges of $G$ with one end in $A$ and the other end in
$B$. If $A = B$, we simply write $E(A)$ for $E[A,A]$. For any vertex
subset $A\subseteq V(G)$, let $\langle A\rangle$ be the induced
subgraph of $A$. Suppose that the graph $G$ is drawn in the
$2$-dimensional Euclidean plane $\mathbb{R}\times \mathbb{R}$. Let
$u$ be a vertex of $G$. By $X_u$ and $Y_u$ we denote the $X$ and
$Y$-coordinates of $u$ on $\mathbb{R}\times \mathbb{R}$.

A drawing of $G$ is said to be a {\it good} drawing, provided that
no edge crosses itself, no adjacent edges cross each other, no two
edges cross more than once, and no three edges cross in a point. It
is well known that the crossing number of a graph is attained only
in {\it good} drawings of the graph. So, we always assume that all
drawings throughout this paper are good drawings. Let  $A$ and $B$ be two disjoint subsets of $E(G)$. In a drawing $D$ of a graph $G$,
the number of the crossings formed by an edge in $A$ and another
edge in $B$ is denoted by $\cross(A,B)$, the number of the crossings
that involve a pair of edges in $A$ is denoted by $\cross (A)$. For convenience,  $\nu_D(E(G))$ is abbreviated to $\nu_D(G)$.
Then the following statement is straightforward.
\begin{lemma}\label{lemma counting of crossings} Let $A$, $B$, $C$ be mutually disjoint subsets of
$E(G)$. Then,
$$\begin{array}{llll}
\nu_D(C,A\cup B)&=&\nu_D(C,A)+\nu_D(C,B),\\
\nu_D(A\cup B)&=&\nu_D(A)+\nu_D(B)+\nu_D(A,B).\\
\end{array}$$
\end{lemma}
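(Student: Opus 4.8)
The plan is to treat both displayed equations as elementary counting identities obtained by partitioning the relevant set of crossings. The crucial starting point is the standing assumption that $D$ is a \emph{good} drawing: no two edges cross more than once and no three edges cross at a common point. Consequently every crossing in $D$ is determined by, and determines, a unique unordered pair of edges whose arcs meet transversally, so that counting crossings is the same as counting the crossing pairs of edges. Both identities then reduce to splitting a set of crossing pairs into disjoint classes according to which of $A$, $B$, $C$ the two edges of each pair belong to.

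For the first identity I would argue as follows. By definition $\nu_D(C, A\cup B)$ counts exactly those crossings formed by one edge of $C$ and one edge of $A \cup B$; this is well defined since $C$ is disjoint from $A \cup B$ by hypothesis. Fix such a crossing and let $e$ denote its edge lying in $A \cup B$. Because $A \cap B = \emptyset$, the edge $e$ belongs to precisely one of $A$ or $B$. Hence each crossing counted by $\nu_D(C, A\cup B)$ is counted by exactly one of $\nu_D(C,A)$ or $\nu_D(C,B)$, and conversely every crossing counted by either of the latter two is a crossing between an edge of $C$ and an edge of $A\cup B$. This establishes $\nu_D(C, A\cup B) = \nu_D(C,A) + \nu_D(C,B)$.

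For the second identity, each crossing counted by $\nu_D(A \cup B)$ is formed by an unordered pair $\{e, f\}$ of distinct edges both lying in $A \cup B$. I would classify the pair according to where its two edges sit: both in $A$, both in $B$, or one in $A$ and one in $B$. Since $A$ and $B$ are disjoint, these three cases are mutually exclusive and together exhaust all possibilities, so they partition the crossings counted by $\nu_D(A\cup B)$. The three classes are counted respectively by $\nu_D(A)$, $\nu_D(B)$, and $\nu_D(A,B)$, which yields $\nu_D(A \cup B) = \nu_D(A) + \nu_D(B) + \nu_D(A,B)$.

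There is essentially no hard step here; the only point that requires any care is the bookkeeping guaranteeing that each classification is a genuine partition, i.e. that no crossing is counted twice and none is omitted. This is exactly what the mutual disjointness of $A$, $B$, $C$ together with the good-drawing hypothesis (each crossing corresponding to a single unordered edge pair) supply, so the additivity of the crossing counts over the partition follows immediately.
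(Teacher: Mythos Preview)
Your argument is correct and is precisely the straightforward partition-of-crossings reasoning the paper has in mind; indeed, the paper offers no proof at all, merely remarking that the statement is straightforward. There is nothing to add or compare.
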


The following observation will be useful for the calculations in
Section 3.

\begin{observation}\label{Observation two bunches cross} For any $m\geq
1$, let $R$ and $S$ be two non-horizontal bunches of $m$ parallel
lines starting from points $(0,0),(1,0),\ldots,(m-1,0)$ respectively
(see Figure 2.1), which are above the real $X$-axis. Then the number
of crossings between $R$ and $S$ is ${a\choose 2}$.
\end{observation}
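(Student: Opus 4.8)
The plan is to reduce the geometric count to a purely combinatorial one by exploiting the fact that each bunch consists of mutually parallel lines. First I would fix coordinates: write the line of $R$ emanating from $(i,0)$ as the upward ray $\{(i+pt,\,t):t\ge 0\}$ and the line of $S$ emanating from $(j,0)$ as $\{(j+qt,\,t):t\ge 0\}$, where $p$ and $q$ are the common horizontal drifts of the two bunches. The non-horizontal hypothesis guarantees that each of these is a genuine upward half-line (so it can be parametrized by its height $t$, possibly with $p=0$ in the vertical case), and for there to be any crossings at all the two bunches must point in different directions, i.e. $p\neq q$. Since the lines inside $R$ are parallel to one another, and likewise inside $S$, no two lines of the same bunch meet; hence every crossing is between some line of $R$ and some line of $S$, and I need only count those.

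Next I would determine, for each ordered pair $(i,j)$ with $i,j\in[0,m-1]$, whether the $R$-line from $(i,0)$ and the $S$-line from $(j,0)$ actually cross above the $X$-axis. Setting the two parametrizations equal forces equal heights and yields the unique intersection parameter $t=\frac{i-j}{q-p}$. The intersection lies strictly above the $X$-axis precisely when $t>0$, which—after fixing the sign of $q-p$, say $q>p$ by relabelling the bunches—happens exactly when $i>j$. When $i=j$ the two lines share the common endpoint $(i,0)$, which by the good-drawing convention is not counted as a crossing, and when $i<j$ the would-be intersection falls below the $X$-axis and is therefore absent from the drawing.

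Finally I would tally the admissible pairs: the crossings above the $X$-axis are in bijection with the pairs $(i,j)$ satisfying $0\le j<i\le m-1$, of which there are exactly ${m \choose 2}$, giving the claimed value independently of the particular slopes.

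The point requiring care is the directional bookkeeping rather than any heavy computation: one must verify that exactly one of the two strict inequalities ($i>j$ or $i<j$) selects the upper-half-plane crossings, and that both the shared-endpoint pairs $i=j$ and the lower-half-plane pairs are correctly excluded. Once the sign of $q-p$ is pinned down, the count ${m \choose 2}$ is immediate.
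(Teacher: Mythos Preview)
Your argument is correct and in fact supplies more than the paper does: the paper offers no proof of this observation at all, merely stating it with a reference to Figure~2.1 and leaving it as evident from the picture. Your parametrization by height $t$ and the reduction to counting ordered pairs $(i,j)$ with $0\le j<i\le m-1$ is the natural way to make the claim precise, and you have handled the edge cases (shared endpoints at $i=j$, the need for $p\neq q$) correctly. One cosmetic point: the paper's statement has a typo, writing ${a\choose 2}$ where ${m\choose 2}$ is intended; you have silently corrected this.
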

\begin{figure}[ht]
\centering
\includegraphics[scale=1.0]{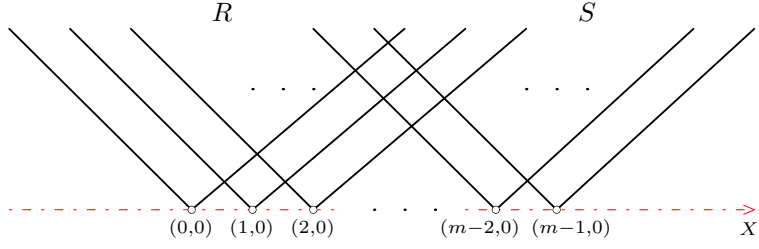}
\caption{\small{The crossings between two bunches of $m$ parallel
lines $R$ and $S$}}
\end{figure}

\medskip

%We follow the standard terminology of Choudum and Sunitha \cite{CS02}.

Now we give the definition of the augmented cubes $AQ_n$. As with
augmented hypercubes, there are many ways to describe it, one of
which is as follows.

\begin{definition}\label{definition augmented cube}
The {\it $n$-dimensional augmented cube} $AQ_n$ is defined
recursively as follows: $AQ_1$ is a complete graph $K_2$ with the
vertex set $\{0,1\}$. For $n\geq 2$, $AQ_n$ is obtained by taking
two copies of the augmented cube $AQ_{n-1}$, denoted by $AQ_{n-1}^0$
and $AQ_{n-1}^1$, and adding $2\times 2^{n-1}$ edges between the two
as follows:\\
\indent Let $V(AQ_{n-1}^0)=\{0a_{n-1}\cdots a_2a_1:a_i\in\{0,1\}\}$
and $V(AQ_{n-1}^1)=\{1b_{n-1}\cdots b_2b_1:b_i\in\{0,1\}\}$. A
vertex $a=0a_{n-1}\cdots a_2a_1$ of $AQ_{n-1}^0$ is joined to a
vertex $b=1b_{n-1}\cdots b_2b_1$ of $AQ_{n-1}^1$ if and only if,
either $a_i=b_i$ for all $i\in [1,n-1]$, or $a_i=\overline{b}_i$ for
all $i\in [1,n-1]$.
\end{definition}

The graphs shown in Figure 2.2 are $AQ_1$, $AQ_2$ and $AQ_3$,
respectively.
\begin{figure}[ht]
\centering
\includegraphics[scale=1.0]{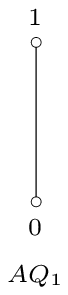} \hspace{50pt}
\includegraphics[scale=1.0]{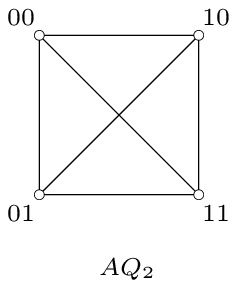} \hspace{50pt}
\includegraphics[scale=1.0]{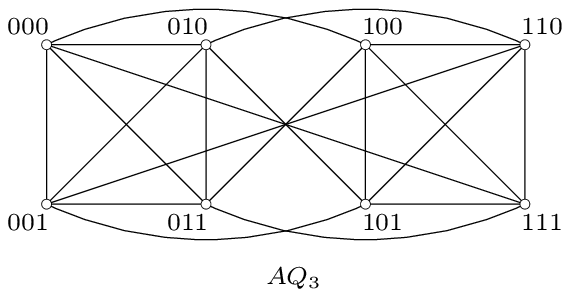}
\caption{\small{Augmented cubes $AQ_1$, $AQ_2$ and $AQ_3$}}
\end{figure}

%Throughout the paper, we use lowercase letters without subscript to
%denote the binary strings, and use the corresponding lowercase
%letters with subscripts to denote the bits of the strings. For
%example, $x=x_nx_{n-1}\ldots x_1$ where $x_i\in\{0,1\}$ for $1\leq
%i\leq n$. Further, let
%$\overline{x}=\overline{x}_n\overline{x}_{n-1}\ldots
%\overline{x}_1$.

Let $a=a_na_{n-1}\cdots a_1$ and $b=b_nb_{n-1}\cdots b_1$ be two
vertices of $AQ_n$. We define
$$\theta_i(a)=a_i \ \ \mbox{ for }\  i\in [1,n],$$
and
$$\begin{array}{llll} & \mbox{Dim}(a,b)=\left \{\begin{array}{llll}
               t, & \mbox{ if } t=1 \mbox{ or } \theta_{t-1}(a)=\theta_{t-1}(b);\\
               -t, &  \mbox{ otherwise},\\
              \end{array}
           \right. \\
\end{array}$$\\
where $t$ is the largest integer $i\in[1,n]$ such that
$\theta_i(a)\neq \theta_i(b)$. For convenience, let
$\mbox{Dim}(a,a)=0$. In particular, if $ab\in E(AQ_n)$, let
$\mbox{Dim}(ab)=\mbox{Dim}(a,b)$ and say the edge $ab$ is of
Dimension $\mbox{Dim}(ab)$. It is easy to see that
$${\rm Dim}(e)\in [-n,-2]\cup [1,n] \ \ \mbox{ for any edge }
e\in E(AQ_n).$$ For any $t\in [-n,-2]\cup [1,n]$, we define
$$\mathscr{I}_t: V(AQ_n)\rightarrow E(AQ_n)$$ to be a map such that
 $\mathscr{I}_t(a)\in E(AQ_n)$ is an
edge incident to $a$ with $${\rm Dim}(\mathscr{I}_t(a))=t.$$ In
particular, for any vertex subset $A\subseteq V(AQ_n)$, let
$$\mathscr{I}_t(A)=\{\mathscr{I}_t(a):a\in A\}.$$

Next we shall introduce a partition of $V(AQ_n)$ and $E(AQ_n)$ for
$n\geq 5$, together with a way to obtain such a partition
inductively, which will be used in the drawing of $AQ_n$ in Section
3.

Let
\begin{equation}\label{equation definition Un}
U^n=\{a\in V(AQ_n): \theta_{n-1}(a)=0\}
\end{equation}
 and
\begin{equation}\label{equation definition Vn}
V^n=\{a\in V(AQ_n): \theta_{n-1}(a)=1\}.
\end{equation}
Hence, we have the following partition
$$V(AQ_n)=U^n\cup V^n.$$
Let $\widehat{a}$ be the vertex of $AQ_n$ which is adjacent to $a$
with ${\rm Dim}(\widehat{a},a)=-(n-2)$. We define
$$\pi:V(AQ_n)\rightarrow V(AQ_{n+1})$$ to be a map such that
\begin{equation}\label{equation definition theta}
\begin{array}{llll} & \theta_i(\pi(a))=\left \{\begin{array}{llll}
               \theta_i(a), & \mbox{ if } \ \ i\in [1,n-1];\\
                \theta_{i-1}(a), & \mbox{ if } \ \ i\in \{n,n+1\}.\\
              \end{array}
           \right. \\
\end{array}
\end{equation}
For any subset $A\subseteq V(AQ_n)$,
 we define
\begin{equation}\label{equation definition omega}
\Omega(A)=\{\pi(a):a\in A\}\cup \{\widehat{\pi(a)}:a\in A\}
\end{equation}
 and
$$\Omega^{(m)}(A)=\underbrace{\Omega(\Omega(\cdots
(\Omega}\limits_{m}(A))\cdots))\ \ \mbox{ for }  m\geq 1.$$ For
convenience, let $$\Omega^{(0)}(A)=A.$$ It is easy to show that
\begin{equation}\label{equation a cap b=empty}
\Omega(\{a\})\cap \Omega(\{b\})=\emptyset
\end{equation}
for any two distinct vertices $a,b\in V(AQ_n)$ with $n\geq 5$.
 Since $|U^{n}|=|V^n|=2^{n-1}$, it follows from \eqref{equation definition Un}, \eqref{equation definition Vn}, \eqref{equation definition theta},
 \eqref{equation definition omega} and \eqref{equation a cap
 b=empty} that $$U^{n+1}=\Omega(U^n)$$ and  $$V^{n+1}=\Omega(V^n)$$
 where $n\geq 5$.

Now we define eight vertex subsets
$U_1^n,U_2^n,U_3^n,U_4^n,V_1^n,V_2^n,V_3^n,V_4^n$ of $V(AQ_n)$
inductively for $n\geq 5$ as follows:

If $n=5$, let
$$\begin{array}{rlll}
U_1^5&=\{00100,00011,00111,00000\},\\
U_2^5&=\{00101,00010,00110,00001\},\\
U_3^5&=\{10100,10011,10111,10000\},\\
U_4^5&=\{10101,10010,10110,10001\},\\
V_1^5&=\{01011,01100,01000,01111\},\\
V_2^5&=\{01010,01101,01001,01110\},\\
V_3^5&=\{11011,11100,11000,11111\},\\
V_4^5&=\{11010,11101,11001,11110\}.\\
\end{array}$$
If $n>5$, let
$$U_i^{n}=\Omega(U_i^{n-1})$$
and
$$V_i^{n}=\Omega(V_i^{n-1})$$
where $i\in \{1,2,3,4\}$.

By \eqref{equation a cap b=empty}, we have that the above eight
vertex subsets are pairwise disjoint. Moreover, by \eqref{equation
definition Un}, \eqref{equation definition Vn}, \eqref{equation
definition theta} and \eqref{equation definition omega}, we have
that
$$U^n=\bigcup\limits_{i=1}^{4} U_i^n$$
and $$V^n=\bigcup\limits_{i=1}^{4} V_i^n.$$ Observe
$$E(AQ_n)=E[U^n,V^n]\cup E(U^n)\cup E(V^n).$$

By induction on $n$ and straight verifications, we can prove the
following lemma which gives a partition of $E(AQ_n)$ and a way to
obtain $E(AQ_{n+1})$ from $E(AQ_n)$.

\begin{lemma}\label{Lemma enlarging the edges} Let $n\geq 5$. Then,

1. $E[U^n,V^n]=\{e\in E(AQ_n): {\rm Dim}(e)\in \{-n,-(n-1),n-1\}\}$.
Moreover, for any $uv\in E[U^n,V^n]$,
$$\begin{array}{llll}
E[\Omega(\{u\}),\Omega(\{v\})]=\left \{\begin{array}{llll}
               \{\pi(u)\pi(v),\widehat{\pi(u)}\widehat{\pi(v)},\pi(u)\widehat{\pi(v)},\widehat{\pi(u)}\pi(v)\},  &  \mbox{ if }\  {\rm Dim}(uv)=-(n-1);\\
               \{\pi(u)\pi(v),\widehat{\pi(u)}\widehat{\pi(v)}\},  &  \mbox{ if } \  {\rm Dim}(uv)=-n;\\
               \emptyset,  &  \mbox{ if } \  {\rm Dim}(uv)=n-1,\\
                             \end{array}
           \right. \\
\end{array}$$
with $${\rm Dim}(\widehat{\pi(u)}\widehat{\pi(v)})={\rm
Dim}(\pi(u)\pi(v)) =
               {\rm Dim}(uv)-1 \mbox{ if } {\rm Dim}(uv)\in
               \{-n,-(n-1)\}$$
and
$${\rm
Dim}(\pi(u)\widehat{\pi(v)})={\rm Dim}(\widehat{\pi(u)}\pi(v)) =
               n \mbox{ if } {\rm Dim}(uv)=-(n-1).$$

2. $E(U^n)\cup E(V^n)=\{e\in E(AQ_n): {\rm Dim}(e)\in
[-(n-2),-2]\cup [1,n-2]\cup \{n\}\}$. Moreover, for any $uv\in
E(U^n)\cup E(V^n)$,
$$\begin{array}{llll}
E[\Omega(\{u\}),\Omega(\{v\})]=\left \{\begin{array}{llll}
               \{\pi(u)\pi(v),\widehat{\pi(u)}\widehat{\pi(v)},\pi(u)\widehat{\pi(v)},\widehat{\pi(u)}\pi(v)\},  &  \mbox{ if } \ \ {\rm Dim}(uv)=-(n-2);\\
               \{\pi(u)\pi(v),\widehat{\pi(u)}\widehat{\pi(v)}\},  &  \mbox{ if } \ \ {\rm Dim}(uv)\in [-(n-3),-2]\cup[1,n-2]\cup \{n\},\\
              \end{array}
           \right. \\
\end{array}$$
with $$\begin{array}{llll} {\rm
Dim}(\widehat{\pi(u)}\widehat{\pi(v)})={\rm Dim}(\pi(u)\pi(v))
=\left \{\begin{array}{llll}
               {\rm Dim}(uv), & \mbox{ if } \ \ {\rm Dim}(uv)\in [-(n-2),-2]\cup [1,n-2];\\
               n+1, & \mbox{ if } \ \ {\rm
                Dim}(uv)=n,
              \end{array}
           \right. \\
\end{array}$$
and $${\rm Dim}(\pi(u)\widehat{\pi(v)})={\rm
Dim}(\widehat{\pi(u)}\pi(v)) =
               n-1  \mbox{ if }  {\rm Dim}(uv)=-(n-2).$$

3. For any two distinct vertices $u,v\in V(AQ_n)$ such that $u$ and
$v$ are not adjacent, $$E[\Omega(\{u\}),\Omega(\{v\})]=\emptyset.$$

4. $E(AQ_n)$ has a partition as follows:
$$\begin{array}{llll}
E[U_1^n,U_2^n]\cup E[U_3^n,U_4^n]\cup E[V_1^n,V_2^n]\cup E[V_3^n,V_4^n]&=&\{e\in E(AQ_n):{\rm Dim}(e)\in \{1,2\}\},\\
E[U_1^n,U_3^n]\cup E[U_2^n,U_4^n]\cup E[V_1^n,V_3^n]\cup E[V_2^n,V_4^n]&=&\{e\in E(AQ_n):{\rm Dim}(e)=n\},\\
E[U_1^n,V_3^n]\cup E[U_2^n,V_4^n]\cup E[V_1^n,U_3^n]\cup E[V_2^n,U_4^n]&=&\{e\in E(AQ_n):{\rm Dim}(e)=-n\},\\
\end{array}$$
$$\begin{array}{llll}
(\cup_{i=1}^{4}E(U_i^n))\bigcup (\cup_{i=1}^{4}E(V_i^n))&=&\{e\in E(AQ_n): {\rm Dim}(e)\in[-(n-2),-2]\cup[3,n-2]\},\\
\cup_{i=1}^4 E[U_i^n,V_i^n]&=&\{e\in E(AQ_n):{\rm Dim}(e)\in\{-(n-1),n-1\}\}.\\
\end{array}$$
\end{lemma}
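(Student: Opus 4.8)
The plan is to prove Lemma \ref{Lemma enlarging the edges} by induction on $n\geq 5$, establishing all four parts simultaneously since they are tightly coupled: the recursive edge-generation formulas in Parts~1 and~2 are precisely the engine that drives the dimension-based classification in Parts~3 and~4. The base case $n=5$ is a finite verification: using the explicit vertex sets $U_i^5,V_i^5$ listed above together with Definition~\ref{definition augmented cube} and the formula for $\mathrm{Dim}$, one checks directly which pairs are adjacent, computes the dimension of each edge, and confirms the claimed partition. This is routine but tedious; I would organize it by first tabulating $\pi$ and the ``hat'' operation on the relevant vertices and then reading off dimensions.

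For the inductive step, the central tool is the recursive structure $U^{n+1}=\Omega(U^n)$, $V^{n+1}=\Omega(V^n)$, and $U_i^{n+1}=\Omega(U_i^n)$, $V_i^{n+1}=\Omega(V_i^n)$. The key computation is to understand how an edge $uv\in E(AQ_n)$ of a given dimension $\mathrm{Dim}(uv)=t$ gives rise, under $\Omega$, to edges in $E[\Omega(\{u\}),\Omega(\{v\})]\subseteq E(AQ_{n+1})$ and to determine the dimensions of those new edges. Concretely, each vertex $a\in V(AQ_n)$ spawns the two vertices $\pi(a)$ and $\widehat{\pi(a)}$ in $V(AQ_{n+1})$; one must analyze the four candidate pairs $\pi(u)\pi(v)$, $\widehat{\pi(u)}\widehat{\pi(v)}$, $\pi(u)\widehat{\pi(v)}$, $\widehat{\pi(u)}\pi(v)$, decide which are genuinely edges of $AQ_{n+1}$, and compute each dimension. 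The definition \eqref{equation definition theta} inserts a duplicated coordinate at positions $n$ and $n+1$, so comparing the coordinate strings of the four candidate pairs and feeding them through the definition of $\mathrm{Dim}$ yields the dimension shifts: an edge of dimension $t\in[-(n-2),-2]\cup[1,n-2]$ keeps its dimension, dimension $n$ shifts to $n+1$, dimensions $-(n-1)$ and $-n$ shift down by one, and the ``cross'' edges $\pi(u)\widehat{\pi(v)},\widehat{\pi(u)}\pi(v)$ acquire dimension $n$ or $n-1$ exactly when the originating edge has the special dimension $-(n-1)$ or $-(n-2)$. Part~3 follows because $\Omega$ only links vertices descended from adjacent vertices, so non-adjacent $u,v$ produce no edges; this is where \eqref{equation a cap b=empty} guarantees no spurious coincidences.

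Once the dimension-shift rules of Parts~1 and~2 are in hand, Parts~3 and~4 are bookkeeping: every edge of $AQ_{n+1}$ arises from some edge of $AQ_n$ (or is one of the newly created cross edges), so one partitions $E(AQ_{n+1})$ by tracing each dimension class in $AQ_n$ forward through $\Omega$ and collecting the images according to which of $U_i,V_i$ the endpoints land in. The inductive hypothesis supplies the classification $E[U_i^n,U_j^n]$, etc., by dimension, and applying $\Omega$ componentwise—using $U_i^{n+1}=\Omega(U_i^n)$—transports each piece to its successor class in $AQ_{n+1}$ with the shifted dimension. I expect the main obstacle to be not any single deep idea but the combinatorial care required to track, for every one of the finitely many dimension regimes, exactly which of the four $\Omega$-generated pairs are edges and what their new dimensions are, and then to verify that these images exhaust and disjointly cover the target dimension classes in $AQ_{n+1}$. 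The dimension $-(n-2)$ case is the most delicate, since it is the unique dimension whose edges generate all four pairs (including two cross edges of dimension $n-1$) rather than two, so the accounting there must be done with particular attention to avoid double-counting or omission.
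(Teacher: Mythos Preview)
Your proposal is correct and matches the paper's approach exactly: the paper states only that the lemma follows ``by induction on $n$ and straight verifications,'' and your outline fleshes out precisely how such an argument proceeds. One small slip in your commentary: you call $-(n-2)$ ``the unique dimension whose edges generate all four pairs,'' but you yourself correctly noted a few lines earlier that $-(n-1)$ also generates all four pairs (with cross edges of dimension $n$); presumably you meant unique within the $E(U^n)\cup E(V^n)$ context of Part~2.
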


In the rest of this section, we shall introduce a particular drawing
of some induced subgraph of $AQ_n$ together with some properties of
the drawing, which will be useful in Section 3.

Fix a positive integer $$\mathcal {N}\geq 5.$$  Take four vertices
in $V(AQ_{\mathcal {N}})$, denoted $z_1^{\mathcal {N}+(0)}$,
$z_2^{\mathcal {N}+(0)}$, $z_3^{\mathcal {N}+(0)}$,
$z_4^{\mathcal{N}+(0)}$, such that \eqref{equation Initial condition
1} and \eqref{equation Initial condition 2} hold:
\begin{equation}\label{equation Initial condition 1}
\begin{array}{llll}
E(\{z_1^{\mathcal{N}+(0)},z_2^{\mathcal {N}+(0)},z_3^{\mathcal {N}+(0)},z_4^{\mathcal{N}+(0)}\})&=&\{z_1^{\mathcal {N}+(0)}z_2^{\mathcal{N}+(0)},z_3^{\mathcal {N}+(0)}z_4^{\mathcal{N}+(0)},z_1^{\mathcal {N}+(0)}z_4^{\mathcal {N}+(0)}, \\
& &\ \ z_2^{\mathcal {N}+(0)}z_3^{\mathcal {N}+(0)},z_1^{\mathcal{N}+(0)}z_3^{\mathcal {N}+(0)},z_2^{\mathcal{N}+(0)}z_4^{\mathcal{N}+(0)}\}.\\
\end{array}
\end{equation}
\begin{equation}\label{equation
Initial condition 2}
\begin{array}{llll}
{\rm Dim}(z_1^{\mathcal {N}+(0)}z_2^{\mathcal {N}+(0)})&={\rm Dim}(z_3^{\mathcal {N}+(0)}z_4^{\mathcal {N}+(0)})=-(\mathcal{N}-2),\\
{\rm Dim}(z_1^{\mathcal {N}+(0)}z_4^{\mathcal {N}+(0)})&={\rm Dim}(z_2^{\mathcal {N}+(0)}z_3^{\mathcal {N}+(0)})=t_1, \\
{\rm Dim}(z_1^{\mathcal {N}+(0)}z_3^{\mathcal {N}+(0)})&={\rm Dim}(z_2^{\mathcal {N}+(0)}z_4^{\mathcal {N}+(0)})=t_2,\\
\end{array}
\end{equation}
where $$\{t_1,t_2\}=\{-(\mathcal{N}-3),\mathcal {N}-2\}.$$

By \eqref{equation definition Un}, \eqref{equation definition Vn}
and \eqref{equation Initial condition 2}, we see that either
$$z_j^{\mathcal {N}+(0)}\in U^{\mathcal {N}} \mbox{ for all }j\in [1,4],$$ or
$$z_j^{\mathcal {N}+(0)}\in V^{\mathcal {N}} \mbox{ for all }j\in [1,4].$$

For $m\geq 0$, we define $\Upsilon_{(m)}^{\mathcal {N}}$ to be the
drawing of the induced subgraph $\langle\Omega^{(m)}(\{z_1^{\mathcal
{N}+(0)},z_2^{\mathcal {N}+(0)},z_3^{\mathcal {N}+(0)},z_4^{\mathcal
{N}+(0)}\})\rangle$ of $AQ_{\mathcal {N}+m}$ such that all the
vertices are drawn precisely at some axis, say the real X-axis, and
all the edges drawn to be semi-circles above or below the X-axis,
and satisfies Inductive Rule A below. For convenience, we shall
denote all the vertices of $\Omega^{(m)}(\{z_1^{\mathcal
{N}+(0)},z_2^{\mathcal {N}+(0)},z_3^{\mathcal {N}+(0)},z_4^{\mathcal
{N}+(0)}\})$ by $z_1^{\mathcal {N}+(m)},z_2^{\mathcal {N}+(m)},
\ldots, z_{2^{m+2}}^{\mathcal {N}+(m)}$ such that
$$X_{z_1^{\mathcal {N}+(m)}}<X_{z_2^{\mathcal {N}+(m)}}<\cdots<X_{z_{2^{m+2}}^{\mathcal
{N}+(m)}}.$$

\medskip

\noindent\textbf{Inductive Rule A.}

We consider the case when $m=0$. Draw $\langle\{z_1^{\mathcal {N}+(0)},z_2^{\mathcal
{N}+(0)},z_3^{\mathcal {N}+(0)},z_4^{\mathcal {N}+(0)}\}\rangle$
such that Conditions (i) and (ii) hold.

\noindent (i) $$X_{z_i^{\mathcal {N}+(0)}}=i\mbox{ for }\  i\in
[1,4].$$
\noindent (ii) $$\mathcal {O}(z_1^{\mathcal
{N}+(0)}z_2^{\mathcal {N}+(0)})=\mathcal {O}(z_3^{\mathcal
{N}+(0)}z_4^{\mathcal {N}+(0)})=-1$$
 and $$\mathcal {O}(z_1^{\mathcal
{N}+(0)}z_3^{\mathcal {N}+(0)})=\mathcal {O}(z_2^{\mathcal
{N}+(0)}z_4^{\mathcal {N}+(0)})=\mathcal {O}(z_1^{\mathcal
{N}+(0)}z_4^{\mathcal {N}+(0)})=\mathcal {O}(z_2^{\mathcal
{N}+(0)}z_3^{\mathcal {N}+(0)})=1,$$ where
$$\begin{array}{llll} & \mathcal {O}(e)=\left \{\begin{array}{llll}
               1, & \mbox{ if $e$ is drawn above the X-axis};\\
               -1, & \mbox{ if $e$ is drawn below the X-axis},\\
              \end{array}
           \right. \\
\end{array}$$
for any $e\in E(\{z_1^{\mathcal {N}+(0)},z_2^{\mathcal
{N}+(0)},z_3^{\mathcal {N}+(0)},z_4^{\mathcal {N}+(0)}\})$.

Now we consider the case when $m>0$.

We first arrange the vertices of
$\langle\Omega^{(m)}(\{z_1^{\mathcal {N}+(0)},z_2^{\mathcal
{N}+(0)},z_3^{\mathcal {N}+(0)},z_4^{\mathcal {N}+(0)}\})\rangle$.
Take an arbitrary vertex $a= z_i^{\mathcal {N}+(m-1)}$ where $i\in
[1,2^{m+1}]$. We set
$$X_{\pi(a)}=X_{a}$$ and
$$X_{\widehat{\pi(a)}}
=X_{z_i^{\mathcal {N}+(m-1)}}+\frac{X_{z_{i+(-1)^{i-1}}^{\mathcal
{N}+(m-1)}}-X_{z_i^{\mathcal {N}+(m-1)}}}{3}.$$ It follows that
\begin{equation}\label{equation zi enlarging two}
\pi(z_i^{\mathcal
{N}+(m-1)})=z_{2i-\frac{1+(-1)^{i-1}}{2}}^{\mathcal {N}+(m)}
\end{equation}
and
\begin{equation}\label{equation zi enlarging pi} \Widehat{\pi(z_i^{\mathcal
{N}+(m-1)})}=z_{2i-\frac{1+(-1)^{i}}{2}}^{\mathcal {N}+(m)}
\end{equation} for all
$i\in [1,2^{m+1}]$.

Next we arrange the edges of $\langle\Omega^{(m)}(\{z_1^{\mathcal
{N}+(0)},z_2^{\mathcal {N}+(0)},z_3^{\mathcal {N}+(0)},z_4^{\mathcal
{N}+(0)}\})\rangle$. By \eqref{equation Initial condition 1},
\eqref{equation Initial condition 2} and applying Conclusions 2, 3
of Lemma \ref{Lemma enlarging the edges} repeatedly, we can show
that
\begin{eqnarray}\label{equaiton dimensions special drawing}
& &\{{\rm Dim}(e): e\in E\big{(}\Omega^{(m)}(\{z_1^{\mathcal
{N}+(0)},z_2^{\mathcal {N}+(0)},z_3^{\mathcal {N}+(0)},z_4^{\mathcal
{N}+(0)}\})\big{)}\}\nonumber \\
&=&[-(\mathcal {N}+m-2),-(\mathcal {N}-3)]\cup [\mathcal {N}-2,
\mathcal {N}+m-2].
\end{eqnarray}
Combined with \eqref{equation zi enlarging two} and \eqref{equation
zi enlarging pi}, we have
\begin{eqnarray}\label{equaiton gm dim -(N+m-2)}
& &\{e\in E\big{(}\Omega^{(m)}(\{z_1^{\mathcal
{N}+(0)},z_2^{\mathcal {N}+(0)},z_3^{\mathcal {N}+(0)},z_4^{\mathcal
{N}+(0)}\})\big{)}: {\rm Dim}(e)=-(\mathcal {N}+m-2)\}\nonumber\\
&=&\{z_{2i-1}^{\mathcal {N}+(m)} z_{2i}^{\mathcal {N}+(m)}: i\in
[1,2^{m+1}]\}.
\end{eqnarray}
Draw $$\mathcal {O}(z_{2i-1}^{\mathcal {N}+(m)} z_{2i}^{\mathcal
{N}+(m)})=-\mathcal {O}(z_{i}^{\mathcal {N}+(m-1)}
z_{i+(-1)^{i-1}}^{\mathcal {N}+(m-1)}) \mbox{ for all }
i\in[1,2^{m+1}].$$

Let $z_{k}^{\mathcal {N}+(m-1)}z_{\ell}^{\mathcal {N}+(m-1)}$ be an
arbitrary edge of $\langle\Omega^{(m-1)}(\{z_1^{\mathcal
{N}+(0)},z_2^{\mathcal {N}+(0)},z_3^{\mathcal {N}+(0)},z_4^{\mathcal
{N}+(0)}\})\rangle$, where $$1\leq k<\ell\leq 2^{m+1}.$$

Suppose $\ell=k+1$ and $k\equiv 1 \pmod 2$. By  Conclusion 2 of
Lemma \ref{Lemma enlarging the edges}, \eqref{equation zi enlarging
two}, \eqref{equation zi enlarging pi} and \eqref{equaiton gm dim
-(N+m-2)}, we see
$$E[\Omega(\{z_{k}^{\mathcal {N}+(m-1)}\}),\
\Omega(\{z_{\ell}^{\mathcal {N}+(m-1)}\})] = \{z_{2k-1}^{\mathcal
{N}+(m)}z_{2\ell-1}^{\mathcal {N}+(m)}, \ z_{2k-1}^{\mathcal
{N}+(m)}z_{2\ell}^{\mathcal {N}+(m)}, \ z_{2k}^{\mathcal
{N}+(m)}z_{2\ell-1}^{\mathcal {N}+(m)}, \ z_{2k}^{\mathcal
{N}+(m)}z_{2\ell}^{\mathcal {N}+(m)}\},$$ and draw
$$\mathcal {O}(z_{2k-1}^{\mathcal {N}+(m)}z_{2\ell-1}^{\mathcal {N}+(m)})
=\mathcal {O}(z_{2k-1}^{\mathcal {N}+(m)}z_{2\ell}^{\mathcal
{N}+(m)}) =\mathcal {O}(z_{2k}^{\mathcal
{N}+(m)}z_{2\ell-1}^{\mathcal {N}+(m)}) =\mathcal
{O}(z_{2k}^{\mathcal {N}+(m)}z_{2\ell}^{\mathcal {N}+(m)}) =\mathcal
{O}(z_{k}^{\mathcal {N}+(m-1)}z_{\ell}^{\mathcal {N}+(m-1)}).$$

Suppose $\ell=k+2$ and $k\equiv 1,2 \pmod 4$. By  Conclusion 2 of
Lemma \ref{Lemma enlarging the edges}, \eqref{equation zi enlarging
two}, \eqref{equation zi enlarging pi} and \eqref{equaiton
dimensions special drawing}, we see
$$E[\Omega(\{z_{k}^{\mathcal {N}+(m-1)}\}),\
\Omega(\{z_{\ell}^{\mathcal {N}+(m-1)}\})]=
\{z_{2k-\frac{1+(-1)^{k-1}}{2}}^{\mathcal
{N}+(m)}z_{2\ell-\frac{1+(-1)^{\ell-1}}{2}}^{\mathcal {N}+(m)}, \ \
z_{2k-\frac{1+(-1)^{k}}{2}}^{\mathcal
{N}+(m)}z_{2\ell-\frac{1+(-1)^{\ell}}{2}}^{\mathcal {N}+(m)}\},$$
and draw
$$\mathcal {O}(z_{2k-\frac{1+(-1)^{k-1}}{2}}^{\mathcal {N}+(m)}z_{2\ell-\frac{1+(-1)^{\ell-1}}{2}}^{\mathcal {N}+(m)})
=-\mathcal {O}(z_{k}^{\mathcal {N}+(m-1)}z_{\ell}^{\mathcal
{N}+(m-1)})$$ and
$$\mathcal {O}(z_{2k-\frac{1+(-1)^{k}}{2}}^{\mathcal {N}+(m)}z_{2\ell-\frac{1+(-1)^{\ell}}{2}}^{\mathcal {N}+(m)})
=\mathcal {O}(z_{k}^{\mathcal {N}+(m-1)}z_{\ell}^{\mathcal
{N}+(m-1)}).$$

Suppose otherwise. By  Conclusion 2 of Lemma \ref{Lemma enlarging
the edges}, \eqref{equation zi enlarging two}, \eqref{equation zi
enlarging pi}, \eqref{equaiton dimensions special drawing} and
\eqref{equaiton gm dim -(N+m-2)}, we see
$$E[\Omega(\{z_{k}^{\mathcal {N}+(m-1)}\}),\
\Omega(\{z_{\ell}^{\mathcal {N}+(m-1)}\})]=
\{z_{2k-\frac{1+(-1)^{k-1}}{2}}^{\mathcal
{N}+(m)}z_{2\ell-\frac{1+(-1)^{\ell-1}}{2}}^{\mathcal {N}+(m)}, \ \
z_{2k-\frac{1+(-1)^{k}}{2}}^{\mathcal
{N}+(m)}z_{2\ell-\frac{1+(-1)^{\ell}}{2}}^{\mathcal {N}+(m)}\},$$
and draw
$$\mathcal {O}(z_{2k-\frac{1+(-1)^{k-1}}{2}}^{\mathcal {N}+(m)}z_{2\ell-\frac{1+(-1)^{\ell-1}}{2}}^{\mathcal {N}+(m)})
=\mathcal {O}(z_{2k-\frac{1+(-1)^{k}}{2}}^{\mathcal
{N}+(m)}z_{2\ell-\frac{1+(-1)^{\ell}}{2}}^{\mathcal {N}+(m)})
=\mathcal {O}(z_{k}^{\mathcal {N}+(m-1)}z_{\ell}^{\mathcal
{N}+(m-1)}).$$

This completes the characterization of the drawing
$\Upsilon_{(m)}^{\mathcal {N}}$.

\noindent $\bullet$  We say the {\sl initial positive order} of the
above characterized drawing $\Upsilon_{(m)}^{\mathcal {N}}$ is
$(z_1^{\mathcal {N}+(0)},z_2^{\mathcal {N}+(0)},z_3^{\mathcal
{N}+(0)},z_4^{\mathcal {N}+(0)})$.

By the above arguments, we see that the drawing of
$\Upsilon_{(m)}^{\mathcal {N}}$ is independent of the value of
$\mathcal {N}$. Therefore, for the convenience, we shall write
$\Upsilon_{(m)}$ for $\Upsilon_{(m)}^{\mathcal {N}}$,
and write $z_i^{(m)}$ for $z_i^{\mathcal {N}+(m)}$, 
and write $\mathcal
{G}^{(m)}$ for the induced subgraph
$\Omega^{(m)}(\{z_1^{(0)},z_2^{(0)},z_3^{(0)},z_4^{(0)}\})$ when it
is unambiguous. While,
 in the rest of this paper, we always mean
$\{z_1^{(0)},z_2^{(0)},z_3^{(0)},z_4^{(0)}\}\subseteq V(AQ_{\mathcal
{N}})$ when we use $\mathcal {N}$.

To make the above notations clear, we give the drawings
$\Upsilon_{(0)},\Upsilon_{(1)},\Upsilon_{(2)}$ and $\Upsilon_{(3)}$
as examples shown in Figure 2.3.
\begin{figure}[ht]
\centering
\includegraphics[scale=1.0]{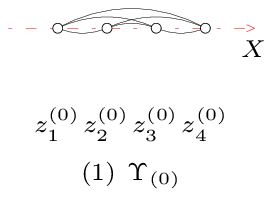}\hspace{4pt}
\includegraphics[scale=1.0]{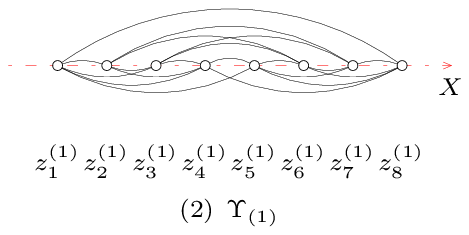}\hspace{4pt}
\includegraphics[scale=1.0]{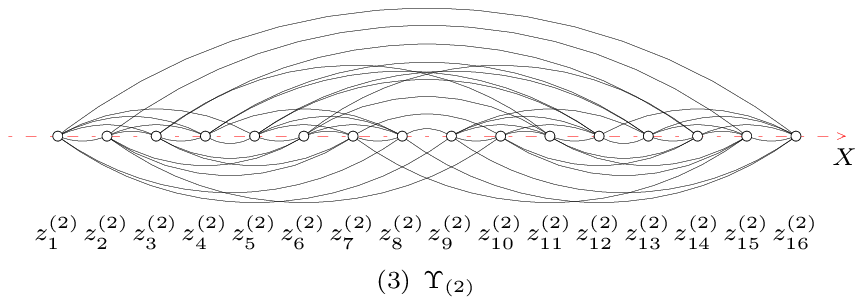}

\vspace{10pt}

\includegraphics[scale=1.0]{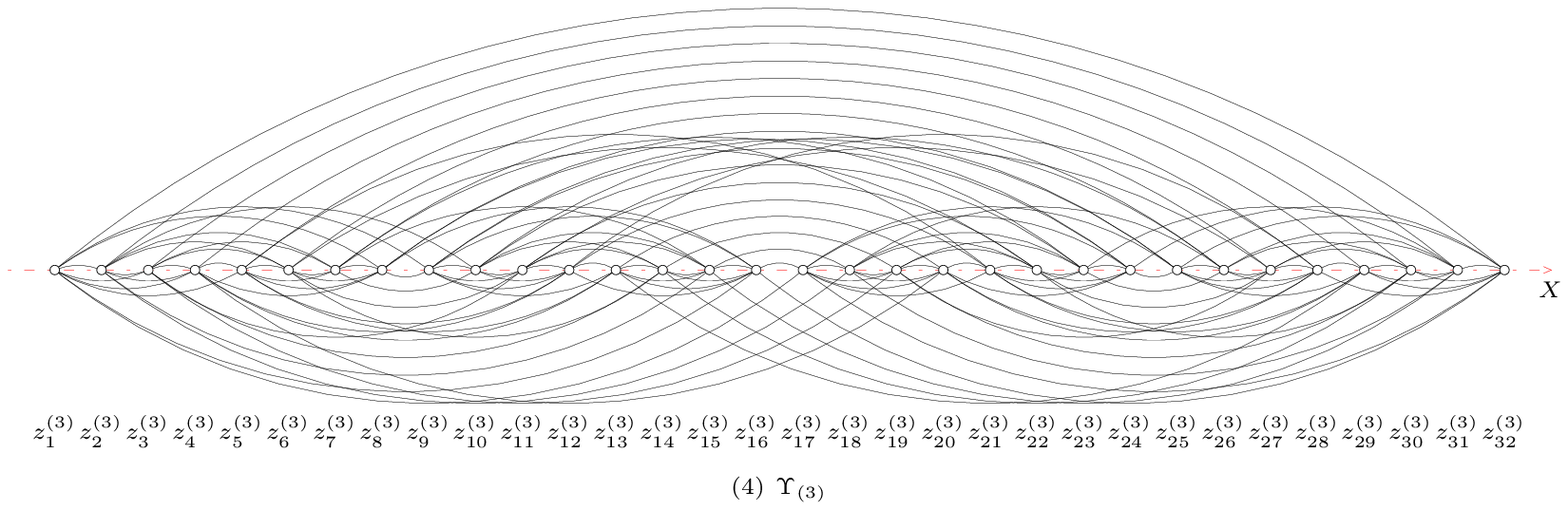}
\caption{\small{The drawings $\Upsilon_{(0)}$, $\Upsilon_{(1)}$,
$\Upsilon_{(2)}$ and $\Upsilon_{(3)}$}}
\end{figure}

Before giving the properties of the drawing $\Upsilon_{(m)}$, we
need to fix some notations.

Let $$\begin{array}{llll}
\mathcal {E}^{(m)}&=&E(\mathcal {G}^{(m)}),\\
\mathcal {E}_{\ell}^{(m)}&=&E(\{z_i^{(m)}: i\in [1,2^{m+1}]\}),\\
\mathcal {E}_{r}^{(m)}&=&E(\{z_i^{(m)}: i\in [2^{m+1}+1,2^{m+2}]\}),\\
\mathcal {H}^{(m)}&=&E[\{z_i^{(m)}: i\in [1,2^{m+1}]\},\{z_i^{(m)}:
i\in [2^{m+1}+1,2^{m+2}]\}].
\end{array}$$

Let $z_i^{(m)}z_j^{(m)}$ be an edge of $\mathcal {E}^{(m)}$, where
$i<j$. We say the edge $z_i^{(m)}z_j^{(m)}$ covers vertex
$z_t^{(m)}$ for any $t\in [i+1,j-1]$. Let $F$ be an edge subset of
$\mathcal {G}^{(m)}$, and let $v$ be a vertex of $\mathcal
{G}^{(m)}$. For any $i\in [1,2^{m+2}]$, we define
$$\begin{array}{llll}
\alpha_i^{(m),F}&=&|\{e\in F: e \mbox{ is incident to } z_i^{(m)} \mbox{ with } \mathcal {O}(e)=1\}|,\\
\beta_i^{(m),F}&=&|\{e\in F: e \mbox{ is incident to } z_i^{(m)} \mbox{ with } \mathcal {O}(e)=-1\}|,\\
\gamma_i^{(m),F}&=&|\{e\in F: e \mbox{ covers } z_i^{(m)} \mbox{ with } \mathcal {O}(e)=1\}|,\\
\xi_i^{(m),F}&=&|\{e\in F: e \mbox{ covers } z_i^{(m)} \mbox{ with }\mathcal {O}(e)=-1\}|.\\
\end{array}$$
We define
$$\mathscr{C}_{+}^{(m)}(F,v)=|\{e\in F: e \mbox{ covers } v \mbox{ under the drawing } \Upsilon_{(m)} \mbox{ with } \mathcal
{O}(e)=1\}|$$ and
$$\mathscr{C}_{-}^{(m)}(F,v)=|\{e\in F: e \mbox{ covers } v \mbox{ under the drawing } \Upsilon_{(m)} \mbox{ with } \mathcal {O}(e)=-1\}|.$$
In particular, let
$$\mathscr{C}_{+}^{(m)}(v)=\mathscr{C}_{+}^{(m)}(\mathcal {E}^{(m)},v)$$
and
$$\mathscr{C}_{-}^{(m)}(v)=\mathscr{C}_{-}^{(m)}(\mathcal {E}^{(m)},v).$$
Moreover, let
$$\mathcal {C}_{+}^{(m)}=\sum\limits_{i\in[1,2^{m+2}]}\mathscr{C}_{+}^{(m)}(z_i^{(m)})$$
and
$$\mathcal {C}_{-}^{(m)}=\sum\limits_{i\in[1,2^{m+2}]}\mathscr{C}_{-}^{(m)}(z_i^{(m)}).$$

Now we are ready to give some properties of the drawing
$\Upsilon_{(m)}$.

By Lemma \ref{Lemma enlarging the edges}, it is not hard to derive
the following two lemmas.

\begin{lemma}\label{Lemma counting upsion} Let $m\geq 0$, and let $i,j$ be two integers of
$[1,2^{m+2}]$. Then,

\noindent 1. Conclusions {\rm (i)}, {\rm (ii)} and {\rm (iii)} are
equivalent:

{\rm (i)} $z_i^{(m)}z_j^{(m)}\in \mathcal {E}^{(m)}$;

{\rm (ii)} $z_{2^{m+2}+1-i}^{(m)}z_{2^{m+2}+1-j}^{(m)}\in \mathcal
{E}^{(m)}$;

{\rm (iii)} $z_i^{(m+1)}z_j^{(m+1)}\in \mathcal {E}^{(m+1)}$.

\noindent Furthermore,

\noindent 2. If $z_i^{(m)}z_j^{(m)}\in \mathcal {E}^{(m)}$, then
$\mathcal {O}(z_i^{(m+1)}z_j^{(m+1)})=-\mathcal
{O}(z_i^{(m)}z_j^{(m)})=-\mathcal
{O}(z_{2^{m+2}+1-i}^{(m)}z_{2^{m+2}+1-j}^{(m)}).$
\end{lemma}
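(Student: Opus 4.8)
The plan is to run a single induction on $m$, whose engine is a \emph{self-similarity} of the construction: the prescription in Inductive Rule A that turns a parent edge $z_k^{\mathcal{N}+(m-1)}z_\ell^{\mathcal{N}+(m-1)}$ into its child edges in $E[\Omega(\{z_k^{(m-1)}\}),\Omega(\{z_\ell^{(m-1)}\})]$, together with the ratios $\mathcal{O}(\text{child})/\mathcal{O}(\text{parent})\in\{+1,-1\}$, depends \emph{only on the indices} $k,\ell$ and not on the level $m$. First I would isolate this as a preliminary observation: reading Rule A, the three cases ($\ell=k+1$ with $k$ odd; $\ell=k+2$ with $k\equiv1,2\pmod 4$; otherwise) and the resulting child-index formulas $2k-\tfrac{1+(-1)^{k-1}}{2}$, etc., are manifestly index-only, and by \eqref{equaiton gm dim -(N+m-2)} the ``four-child'' case (the dimension $-(\mathcal{N}+m-1)$ edges produced by Conclusion~2 of Lemma~\ref{Lemma enlarging the edges} with ${\rm Dim}=-(n-2)$) is \emph{exactly} the consecutive odd-start pairs $\ell=k+1$, $k$ odd, uniformly in $m$. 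Thus ``which index-pairs are edges'' and ``the sign attached to each child'' are governed by one $m$-independent rule $\Phi$ on pairs, bottoming out at $\mathcal{G}^{(0)}=K_4$ (all six pairs, with the orientations of condition (ii) of Rule A).

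For the equivalence of (i) and (iii) I would use that the first $2^{m+2}$ vertices $z_1^{(m+1)},\dots,z_{2^{m+2}}^{(m+1)}$ are precisely the $\Omega$-children of the left half $z_1^{(m)},\dots,z_{2^{m+1}}^{(m)}$, via \eqref{equation zi enlarging two}--\eqref{equation zi enlarging pi}, with the same index bookkeeping by which $z_1^{(m)},\dots,z_{2^{m+1}}^{(m)}$ are the children of $z_1^{(m-1)},\dots,z_{2^m}^{(m-1)}$ inside level $m$. Fix $i<j$ in $[1,2^{m+2}]$ and set $p=\lceil i/2\rceil$, $q=\lceil j/2\rceil\in[1,2^{m+1}]$. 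By Conclusion~3 of Lemma~\ref{Lemma enlarging the edges}, $z_i^{(m+1)}z_j^{(m+1)}\in\mathcal{E}^{(m+1)}$ forces either $p=q$ (the always-present new edge $\{i,j\}=\{2p-1,2p\}$) or $z_p^{(m)}z_q^{(m)}\in\mathcal{E}^{(m)}$; the same holds one level down relating $z_i^{(m)}z_j^{(m)}$ to $z_p^{(m-1)}z_q^{(m-1)}$. Since $\Phi$ is level-independent and the induction hypothesis gives $z_p^{(m)}z_q^{(m)}\in\mathcal{E}^{(m)}\iff z_p^{(m-1)}z_q^{(m-1)}\in\mathcal{E}^{(m-1)}$, the two membership statements agree. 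The base case $m=0$ is the direct check that the subgraph of $\mathcal{G}^{(1)}$ on $\{z_1^{(1)},\dots,z_4^{(1)}\}$ is $K_4=\mathcal{G}^{(0)}$.

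For (i) $\iff$ (ii) I would show the reflection $\rho_m(i)=2^{m+2}+1-i$ is an edge-preserving involution, again by induction. The key compatibility is that $\rho_m$ intertwines with $\Omega$: a short computation gives $\{\rho_m(2p-1),\rho_m(2p)\}=\{2p'-1,2p'\}$ with $p'=2^{m+1}+1-p=\rho_{m-1}(p)$, so $\rho_m$ carries the child-pair of $z_p^{(m-1)}$ onto the child-pair of $z_{\rho_{m-1}(p)}^{(m-1)}$. Granting that $\Phi$ is \emph{equivariant} under reflection, the induction hypothesis ($\rho_{m-1}$ preserves $\mathcal{E}^{(m-1)}$) propagates through $\Omega$ to $\rho_m$; the base case is the reflection symmetry $1\leftrightarrow4$, $2\leftrightarrow3$ of $K_4$. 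Part~2 then falls out of the same two inductions: its first equality holds because the child-sign ratio is level-independent while the two parent orientations differ by $-1$ (Part~2 at level $m-1$), and its second equality holds because reflection preserves both the child-sign ratio and, by induction, the parent orientation, using the orientation data $\mathcal{O}(z_{2k-1}^{(m)}z_{2\ell}^{(m)})=\mathcal{O}(z_k^{(m-1)}z_\ell^{(m-1)})$, etc., recorded in Rule A and \eqref{equaiton dimensions special drawing}.

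I expect the single genuine obstacle to be the \emph{reflection-equivariance of $\Phi$}: one must verify that sending a parent pair $(k,\ell)$ to $(\rho(\ell),\rho(k))$ permutes the three cases of Rule A among themselves and matches up the child indices and their $\pm$ signs (including the asymmetric Case~B, where one child is $-\mathcal{O}$ and one is $+\mathcal{O}$). This is a finite but delicate parity computation with the $(-1)^{k}$ and $\pmod 4$ conditions, and it is the step where I would spend the most care; everything else is the bookkeeping that the authors summarize as ``straight verifications.''
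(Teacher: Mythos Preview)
Your proposal is correct and is essentially the approach the paper has in mind: the paper gives no proof beyond the sentence ``By Lemma~\ref{Lemma enlarging the edges}, it is not hard to derive the following two lemmas,'' and your induction on $m$ via the level-independence of Inductive Rule~A is precisely how one unpacks that sentence. Your identification of the reflection-equivariance of the three cases of Rule~A (and in particular the asymmetric sign assignment in the $\ell=k+2$ case) as the one step needing care is accurate; once that parity check is done, the rest is exactly the ``straight verification'' the paper invokes.
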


\begin{lemma}\label{lemma m,k} Let $m,k\geq 0$, and
let $z_i^{(m)}z_j^{(m)}\in\mathcal {E}^{(m)}$ such that $i<j-2$. For
any $t\in [1,2^{m+2}]$,
$$\begin{array}{llll}
& & \sum\limits_{v\in\Omega^{(k)}(\{z_t^{(m)}\})}\mathscr{C}_{+}^{(m+k)}\big{(}E[\Omega^{(k)}(\{z_i^{(m)}\}),\Omega^{(k)}(\{z_j^{(m)}\})], \ v\big{)}\\
&=& \left \{\begin{array}{llll}
               4^{k}, & \mbox{ if } \ i<t<j \mbox{ and } \mathcal {O}(z_i^{(m)}z_j^{(m)})=1;\\
               2^{k-1}\cdot (2^k-1), & \mbox{ if } \ t\in \{i,j\} \mbox{ and } \mathcal {O}(z_i^{(m)}z_j^{(m)})=1;\\
               0, & \mbox{ otherwise,}\\
              \end{array}
           \right. \\
\end{array}$$
and
$$\begin{array}{llll}
& & \sum\limits_{v\in\Omega^{(k)}(\{z_t^{(m)}\})}\mathscr{C}_{-}^{(m+k)}(E[\Omega^{(k)}(\{z_i^{(m)}\}),\Omega^{(k)}(\{z_j^{(m)}\})],v)\\
&=& \left \{\begin{array}{llll}
               4^{k}, & \mbox{ if } \ i<t<j \mbox{ and } \mathcal {O}(z_i^{(m)}z_j^{(m)})=-1;\\
               2^{k-1}\cdot (2^k-1), & \mbox{ if } \ t\in \{i,j\} \mbox{ and } \mathcal {O}(z_i^{(m)}z_j^{(m)})=-1;\\
               0, & \mbox{ otherwise.}\\
              \end{array}
           \right. \\
\end{array}$$
\end{lemma}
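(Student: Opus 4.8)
The plan is to proceed by induction on $k$, with the base case $k=0$ handled directly and the inductive step driven by Lemma \ref{Lemma counting upsion}, which controls how covering relations and orientations transform when one passes from $\Upsilon_{(m+k)}$ to $\Upsilon_{(m+k+1)}$. Throughout I will treat only the first identity (the one involving $\mathscr{C}_{+}^{(m+k)}$); the second follows by the identical argument with all orientations reversed, since the whole construction is symmetric under $\mathcal{O}\mapsto-\mathcal{O}$.

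First I would dispose of the base case $k=0$. Here $\Omega^{(0)}(\{z_t^{(m)}\})=\{z_t^{(m)}\}$ and the edge set $E[\Omega^{(0)}(\{z_i^{(m)}\}),\Omega^{(0)}(\{z_j^{(m)}\})]$ is just the single edge $z_i^{(m)}z_j^{(m)}$ (using $i<j-2$ to know it is a genuine edge of $\mathcal{E}^{(m)}$). The sum then counts whether this one edge covers $z_t^{(m)}$ with positive orientation. By the definition of covering, it covers $z_t^{(m)}$ exactly when $i<t<j$, and then it contributes $1=4^0$ precisely when $\mathcal{O}(z_i^{(m)}z_j^{(m)})=1$; when $t\in\{i,j\}$ the edge is incident to but does not cover $z_t^{(m)}$, so the count is $0=2^{-1}\cdot(2^0-1)$; and otherwise it is $0$. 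This matches all three cases.

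For the inductive step, assume the formula holds for $k$ and consider $k+1$. The key structural input is that $\Omega^{(k+1)}(\{z_s^{(m)}\})=\Omega(\Omega^{(k)}(\{z_s^{(m)}\}))$, so each vertex $v=z_p^{(m+k)}$ in $\Omega^{(k)}(\{z_t^{(m)}\})$ splits into the two vertices $\pi(v)$ and $\widehat{\pi(v)}$ at level $m+k+1$, and each edge $z_p^{(m+k)}z_q^{(m+k)}$ of $E[\Omega^{(k)}(\{z_i^{(m)}\}),\Omega^{(k)}(\{z_j^{(m)}\})]$ expands into the two or four edges described by Conclusion 2 of Lemma \ref{Lemma enlarging the edges}. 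I would use Lemma \ref{Lemma counting upsion}(1)(iii) to identify exactly which expanded edges lie in $\mathcal{E}^{(m+k+1)}$, and then feed the orientation rule of Lemma \ref{Lemma counting upsion}(2), namely $\mathcal{O}(z_p^{(m+k+1)}z_q^{(m+k+1)})=-\mathcal{O}(z_p^{(m+k)}z_q^{(m+k)})$, into a careful bookkeeping of which of the new edges cover which of the split vertices $\pi(v),\widehat{\pi(v)}$. The goal is to show that a single edge contributing a weight $w$ to the level-$(m+k)$ sum contributes exactly $4w$ to the level-$(m+k+1)$ sum in the covering case and that the incidence ($t\in\{i,j\}$) case transforms $2^{k-1}(2^k-1)$ into $2^{k}(2^{k+1}-1)$. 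This last arithmetic reconciliation, $4\cdot 4^k=4^{k+1}$ and the growth of the boundary term, is where the recursion must close.

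The hard part will be the middle, $t\in\{i,j\}$ boundary case, where an incident edge at level $m+k$ can spawn edges at level $m+k+1$ that do cover some of the newly created vertices; here the orientation flip and the precise geometric placement rule $X_{\widehat{\pi(a)}}=X_a+\tfrac13(X_{a'}-X_a)$ (which determines, for each split pair, which vertex sits nearer $z_i$ versus $z_j$) must be tracked exactly, because the count changes from a pure $4^k$ to the strictly smaller $2^{k-1}(2^k-1)$, reflecting the fact that half of the doubling is ``absorbed'' into incidences rather than coverings at the endpoints. I expect the bulk of the verification to reduce, after invoking the two preceding lemmas, to a short parity computation in the subscripts $2p-\tfrac{1\pm(-1)^{p-1}}{2}$, but isolating the correct case split there will require the most care.
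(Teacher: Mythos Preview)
Your induction-on-$k$ plan is sound and matches the spirit of what the paper intends (the paper itself gives no proof, merely asserting that this lemma and Lemma~\ref{Lemma counting upsion} are ``not hard to derive'' from Lemma~\ref{Lemma enlarging the edges}). However, you have misidentified the key tool, and this leads to a concrete error in your sketch.

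Lemma~\ref{Lemma counting upsion} is \emph{not} about the $\Omega$-expansion. Its part~(1)(iii) and part~(2) say that the vertex at position $p$ in $\mathcal{G}^{(m)}$ and the vertex at position $p$ in $\mathcal{G}^{(m+1)}$ (two genuinely different vertices, in different ambient cubes) have the same adjacency pattern, with orientations flipped; this is a self-similarity statement used later to identify $\mathcal{E}_\ell^{(m)}$ with a sign-reversed copy of $\mathcal{E}^{(m-1)}$. It does \emph{not} describe what happens to the edges in $E[\Omega(\{z_p\}),\Omega(\{z_q\})]$, whose endpoints sit at positions near $2p$ and $2q$, not at $p$ and $q$. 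The correct reference for the expansion is Inductive Rule~A together with Conclusion~2 of Lemma~\ref{Lemma enlarging the edges}.

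Once you look there, you will see the crucial point you have backwards: because $j-i>2$, the edge $z_i^{(m)}z_j^{(m)}$ falls into the ``Suppose otherwise'' clause of Inductive Rule~A, which produces exactly \emph{two} edges (never four) and \emph{preserves} the orientation, not flips it. Moreover the dimension is preserved and stays strictly below the threshold $-(\mathcal{N}+m+k-2)$ at each subsequent level, so every later expansion again lands in the ``otherwise'' clause. Hence after $k$ steps there are exactly $2^k$ edges in $E[\Omega^{(k)}(\{z_i^{(m)}\}),\Omega^{(k)}(\{z_j^{(m)}\})]$, all with orientation $\mathcal{O}(z_i^{(m)}z_j^{(m)})$, and each vertex of $\Omega^{(k)}(\{z_i^{(m)}\})$ (respectively $\Omega^{(k)}(\{z_j^{(m)}\})$) is an endpoint of exactly one of them. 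With this in hand the counts are immediate: for $i<t<j$ every one of the $2^k$ edges covers every one of the $2^k$ vertices in $\Omega^{(k)}(\{z_t^{(m)}\})$, giving $4^k$; for $t=i$ (and symmetrically $t=j$) the edge anchored at the $s$-th vertex of the block covers the $2^k-s$ block-vertices to its right, and $\sum_{s=1}^{2^k}(2^k-s)=\binom{2^k}{2}=2^{k-1}(2^k-1)$. No delicate parity computation in the subscripts is needed.
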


\begin{lemma}\label{Lemma Claim A}
 Let $m>0$, and let $z_i^{(m)}z_j^{(m)}\in \mathcal
{E}^{(m)}$ where $1\leq i<j\leq 2^{m+2}$. If $j-i=2$ then $i\equiv
1,2 \pmod 4$.
\end{lemma}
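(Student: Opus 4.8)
The plan is to prove the statement by tracking precisely how each edge of $\mathcal{E}^{(m)}$ is generated from level $m-1$ in the construction of $\Upsilon_{(m)}$, and then simply reading off which offspring edges can have index difference $j-i$ equal to $2$. First I would note that every edge of $\mathcal{G}^{(m)}$ joins two vertices of $\Omega(V(\mathcal{G}^{(m-1)}))$, and that by \eqref{equation a cap b=empty} these two endpoints lie either in one common class $\Omega(\{u\})$ or in two distinct classes $\Omega(\{u\}),\Omega(\{v\})$. In the former case the edge is one of the ``within'' edges listed in \eqref{equaiton gm dim -(N+m-2)}, namely $z_{2i-1}^{(m)}z_{2i}^{(m)}$, which has index difference $1$ and can be discarded. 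In the latter case Conclusion~3 of Lemma \ref{Lemma enlarging the edges} forces $uv$ to be an edge $z_k^{(m-1)}z_\ell^{(m-1)}$ of $\mathcal{E}^{(m-1)}$ (say $k<\ell$), so that the edge in question is one of the offspring produced by one of the three cases of Inductive Rule~A. Hence everything reduces to a finite inspection of those offspring.

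Writing $\epsilon(t)=\frac{1+(-1)^{t-1}}{2}$ (which equals $1$ for odd $t$ and $0$ for even $t$), I would treat the first construction case, $\ell=k+1$ with $k$ odd, separately: it produces four offspring joining $\{z_{2k-1}^{(m)},z_{2k}^{(m)}\}$ to $\{z_{2k+1}^{(m)},z_{2k+2}^{(m)}\}$, of which exactly $z_{2k-1}^{(m)}z_{2k+1}^{(m)}$ and $z_{2k}^{(m)}z_{2k+2}^{(m)}$ have index difference $2$. Since $k$ is odd, writing $k=2s-1$ gives $2k-1=4s-3\equiv 1$ and $2k=4s-2\equiv 2\pmod 4$, so these are precisely the two admissible residues; this is where the conclusion comes from.

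The remaining work is to show the other two cases never create index difference $2$. Both of them yield the same pair of offspring, $z_{2k-\epsilon(k)}^{(m)}z_{2\ell-\epsilon(\ell)}^{(m)}$ and $z_{2k-1+\epsilon(k)}^{(m)}z_{2\ell-1+\epsilon(\ell)}^{(m)}$ (they differ only in the orientation $\mathcal{O}$ assigned), whose index differences are $2(\ell-k)-(\epsilon(\ell)-\epsilon(k))$ and $2(\ell-k)+(\epsilon(\ell)-\epsilon(k))$, with $\epsilon(\ell)-\epsilon(k)\in\{-1,0,1\}$. A short check of the possibilities finishes it: for $\ell-k\geq 3$ both differences are at least $5$; for $\ell-k=2$ the endpoints $k,\ell$ have equal parity, forcing $\epsilon(\ell)-\epsilon(k)=0$ and both differences equal to $4$; and $\ell-k=1$ can occur here only with $k$ even (for $k$ odd would place the edge in the first case), giving $\epsilon(k)=0,\epsilon(\ell)=1$ and differences $1$ and $3$. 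In no subcase does either difference equal $2$. Thus a level-$m$ edge with $j-i=2$ can only come from the first case, whence $i\equiv 1,2\pmod 4$.

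The part needing the most care is not any single computation but the completeness of the reduction in the first paragraph: one must be certain that the within edges together with the offspring of the three construction cases exhaust $\mathcal{E}^{(m)}$, and that each edge has a unique origin, which is exactly what \eqref{equation a cap b=empty} (disjointness of the classes $\Omega(\{\cdot\})$) and Conclusion~3 of Lemma \ref{Lemma enlarging the edges} provide. A pleasant feature is that no induction hypothesis on level $m-1$ is required, since the offspring computation above is valid for every pair $k<\ell$ regardless of whether $z_k^{(m-1)}z_\ell^{(m-1)}$ happens to be an edge; a single unwinding of the recursion therefore settles all $m>0$ at once, with the instance $m=1$ handled by the same analysis applied to the $K_4$ sitting at level $0$.
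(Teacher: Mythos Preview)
Your proof is correct and rests on the same one-step unwinding of the recursion that the paper uses, invoking Lemma~\ref{Lemma enlarging the edges} and the index formulas \eqref{equation zi enlarging two}--\eqref{equation zi enlarging pi}. The difference is one of direction: the paper argues \emph{backward}, observing that $j-i=2$ forces $z_i^{(m)}$ and $z_j^{(m)}$ to lie in adjacent classes $\Omega(\{z_k^{(m-1)}\})$ and $\Omega(\{z_{k+1}^{(m-1)}\})$ with one a $\pi$-image and the other a $\widehat{\pi}$-image; Lemma~\ref{Lemma enlarging the edges} then forces ${\rm Dim}(z_k^{(m-1)}z_{k+1}^{(m-1)})=-((\mathcal N+m-1)-2)$, and \eqref{equaiton gm dim -(N+m-2)} gives $k$ odd. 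You instead argue \emph{forward}, enumerating every offspring edge produced by the three cases of Inductive Rule~A and computing its index difference, which yields the same conclusion after a short case check. The paper's route is shorter because it never needs to examine the offspring with $\ell-k\ge 2$; your route is more self-contained because it does not need the reader to see in advance why $j-i=2$ forces a cross pairing $\pi\cdot\widehat{\pi}$.
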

\begin{proof} By \eqref{equation zi
enlarging two} and \eqref{equation zi enlarging pi}, we have that
$z_i^{(m)}\in \Omega(\{z_{\lceil\frac{i}{2}\rceil}^{(m-1)}\})$ and
$z_j^{(m)}\in
\Omega(\{z_{\lceil\frac{j}{2}\rceil}^{(m-1)}\})=\Omega(\{z_{\lceil\frac{i}{2}\rceil+1}^{(m-1)}\})$,
and moreover that, either $z_i^{(m)}\in
\pi(\{z_{\lceil\frac{i}{2}\rceil}^{(m-1)}\})$ and $z_j^{(m)}\in
\Widehat{\pi(\{z_{\lceil\frac{i}{2}\rceil+1}^{(m-1)}\})}$, or
$z_i^{(m)}\in
\Widehat{\pi(\{z_{\lceil\frac{i}{2}\rceil}^{(m-1)}\})}$ and
$z_j^{(m)}\in \pi(\{z_{\lceil\frac{i}{2}\rceil+1}^{(m-1)}\})$.
Combined with Lemma \ref{Lemma enlarging the edges}, we have that
$z_{\lceil\frac{i}{2}\rceil}^{(m-1)}z_{\lceil\frac{i}{2}\rceil+1}^{(m-1)}\in
\mathcal {E}^{(m-1)}$ with ${\rm
Dim}(z_{\lceil\frac{i}{2}\rceil}^{(m-1)}z_{\lceil\frac{i}{2}\rceil+1}^{(m-1)})=-((\mathcal{N}+m-1)-2)$.
It follows from \eqref{equaiton gm dim -(N+m-2)} that
$\lceil\frac{i}{2}\rceil\equiv 1\pmod 2$, and so $i\equiv 1,2\pmod
4$. The lemma follows. \end{proof}

By Lemma \ref{Lemma Claim A}, we have that
\begin{equation}\label{equation H(m) cap n+m-2=empty}
|i-j|\neq 2 \  \mbox{ for any } z_i^{(m)}z_j^{(m)}\in \mathcal
{H}^{(m)} \mbox{ with } m>0.
\end{equation}

Let $m\geq 1$. We define
\begin{equation}\label{equation definition Itm}
\mathcal {I}_t^{(m)}=[(t-1)\cdot 2^{m-1}+1, \  t\cdot 2^{m-1}] \
\mbox{ for }t\in [1,8].
\end{equation}

By \eqref{equation H(m) cap n+m-2=empty} and Lemma \ref{Lemma
enlarging the edges}, we can derive that

\begin{equation}\label{equation ai(m)}
\begin{array}{llll} & \alpha_i^{(m),\mathcal {H}^{(m)}}=\left \{\begin{array}{llll}
               1, & i\in \mathcal{I}_1^{(m)}\cup \mathcal{I}_4^{(m)}\cup \mathcal{I}_5^{(m)}\cup \mathcal{I}_8^{(m)};\\
               2, & i\in \mathcal{I}_2^{(m)}\cup \mathcal{I}_3^{(m)}\cup \mathcal{I}_6^{(m)}\cup \mathcal{I}_7^{(m)},\\
              \end{array}
           \right. \\
\end{array}
 \end{equation}
 and \
 \begin{equation}\label{equation bi(m)}
\begin{array}{llll} & \beta_i^{(m),\mathcal {H}^{(m)}}=\left \{\begin{array}{llll}
               1, & i\in \mathcal{I}_1^{(m)}\cup \mathcal{I}_4^{(m)}\cup \mathcal{I}_5^{(m)}\cup \mathcal{I}_8^{(m)};\\
               0, & i\in \mathcal{I}_2^{(m)}\cup \mathcal{I}_3^{(m)}\cup \mathcal{I}_6^{(m)}\cup \mathcal{I}_7^{(m)},\\
              \end{array}
           \right. \\
\end{array}
 \end{equation}
 for all $m\geq 1$.

\begin{lemma}\label{Lemma C+ and C-} For $m\geq 0$, $$\mathcal {C}_{+}^{(m)}=\frac{7}{3}\cdot 4^{m+1}-(2m+\frac{16}{3}+\frac{7\cdot (1+(-1)^{m+1})}{6})\cdot 2^m$$
and
$$\mathcal {C}_{-}^{(m)}=\frac{5}{3}\cdot 4^{m+1}-(2m+\frac{13}{3}+\frac{7\cdot (1+(-1)^{m})}{6})\cdot 2^m$$
\end{lemma}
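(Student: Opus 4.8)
The plan is to prove the two formulas simultaneously by induction on $m$, after first reading the quantities combinatorially. Since any edge $z_i^{(m)}z_j^{(m)}$ with $i<j$ covers exactly the vertices $z_{i+1}^{(m)},\dots,z_{j-1}^{(m)}$, we have
$$\mathcal{C}_{+}^{(m)}=\sum_{\substack{e\in\mathcal{E}^{(m)}\\ \mathcal{O}(e)=1}}(j_e-i_e-1), \qquad \mathcal{C}_{-}^{(m)}=\sum_{\substack{e\in\mathcal{E}^{(m)}\\ \mathcal{O}(e)=-1}}(j_e-i_e-1),$$
where $i_e<j_e$ are the endpoint indices of $e$. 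Writing $\mathrm{cov}(e)=j_e-i_e-1$, I would first settle the base case $m=0$ directly from \eqref{equation Initial condition 1} and Rule~A(ii): the six edges of $\langle\{z_1^{(0)},\dots,z_4^{(0)}\}\rangle$ give $\mathcal{C}_+^{(0)}=4$ and $\mathcal{C}_-^{(0)}=0$, which agree with the formulas.

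The heart of the argument is a one–step recursion $m\to m+1$ read off from Inductive Rule~A together with Conclusions 2,3 of Lemma~\ref{Lemma enlarging the edges}. I would classify each edge of $\mathcal{E}^{(m)}$ by its index–span $j_e-i_e\in\{1,2,\ge 3\}$ (and, for span $1$, by the parity of $i_e$) and record how its lift $E[\Omega(\{z_{i_e}^{(m)}\}),\Omega(\{z_{j_e}^{(m)}\})]$ sits in $\Upsilon_{(m+1)}$: a span–$1$ edge with $i_e$ odd lifts to a $4$–fan whose four children (of spans $\{1,2,2,3\}$) all inherit $\mathcal{O}(e)$; a span–$1$ edge with $i_e$ even, and every span–$\ge3$ edge, lifts to two children of orientation $\mathcal{O}(e)$; a span–$2$ edge, which by Lemma~\ref{Lemma Claim A} always has $i_e\equiv1,2\pmod4$, lifts to two children of span $4$, one of orientation $\mathcal{O}(e)$ and one of $-\mathcal{O}(e)$. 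The brand–new edges $z_{2i-1}^{(m+1)}z_{2i}^{(m+1)}$ have span $1$ and contribute nothing. For the span–$\ge3$ edges I would use Lemma~\ref{lemma m,k} with $k=1$ to evaluate the total covering of a lift in one stroke: it equals $4\,\mathrm{cov}(e)+2$ and lies entirely on the side $\mathcal{O}(e)$, while the span–$1$ and span–$2$ lifts are handled from the explicit child spans above. Summing these contributions produces linear recursions for $\mathcal{C}_{\pm}^{(m+1)}$ in terms of $\mathcal{C}_{\pm}^{(m)}$ and a few auxiliary counts.

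Those auxiliary counts are the genuine bookkeeping. At each level I would track the number of span–$1$ edges split by left–index parity and orientation, of span–$2$ edges by orientation, and of span–$\ge3$ edges by orientation, closing the system with $|\mathcal{E}^{(m)}|$. Each lift rule turns these into its own clean recursion; most importantly the orientation imbalance $\delta_1^{\mathrm{odd}}$ of odd–left span–$1$ edges obeys $\delta_1^{\mathrm{odd},(m+1)}=-2\,\delta_1^{\mathrm{odd},(m)}$, since each such edge spawns two new short edges of the \emph{opposite} orientation through the relation $\mathcal{O}(z_{2i-1}^{(m+1)}z_{2i}^{(m+1)})=-\mathcal{O}(z_i^{(m)}z_{i\pm1}^{(m)})$ of Rule~A. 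Solving the auxiliary recursions yields closed forms built from $4^m$, $2^m$ and $(-2)^m$, and the term $(-2)^m=(-1)^m2^m$ inherited from $\delta_1^{\mathrm{odd}}$ is precisely what generates the parity factor $1+(-1)^{m+1}$ in the statement.

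Finally I would substitute the solved auxiliary quantities into the recursions and verify that the claimed closed forms satisfy them and the base case; it is cleaner to handle the sum $\mathcal{C}_+^{(m)}+\mathcal{C}_-^{(m)}=4^{m+2}-(4m+12)2^m$, whose recursion carries no oscillating term, and the difference $\mathcal{C}_+^{(m)}-\mathcal{C}_-^{(m)}$ separately, then recombine. The main obstacle is not any single estimate but the disciplined case analysis and parity–tracking of the second and third steps: one must correctly separate span–$1$ edges by the parity of their left endpoint (only the odd ones $4$–fan) and keep the orientation–resolved counts exact, because it is their alternating imbalance, not their totals, that carries the $(-2)^m$ contribution.
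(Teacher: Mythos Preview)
Your plan is correct and would close, but it is a genuinely different argument from the paper's. The paper never classifies edges by span. Instead it exploits the self–similarity of Lemma~\ref{Lemma counting upsion}: the restriction of $\Upsilon_{(m)}$ to the left half $\{z_1^{(m)},\dots,z_{2^{m+1}}^{(m)}\}$ is, index for index, a copy of $\Upsilon_{(m-1)}$ with every orientation flipped, and by the left–right symmetry the right half contributes identically. Together with Lemma~\ref{lemma m,k} applied to the cross edges $\mathcal{H}^{(m)}$ (via the four values in Table~2.1) this gives directly the two–term coupled recursion
\[
\mathcal{C}_{+}^{(m)}=2\,\mathcal{C}_{-}^{(m-1)}+6\cdot 4^{m}-3\cdot 2^{m},
\qquad
\mathcal{C}_{-}^{(m)}=2\,\mathcal{C}_{+}^{(m-1)}+2\cdot 4^{m}-2^{m},
\]
and the stated formulas follow by solving this $2\times 2$ system (base cases $m=0,1$ checked by hand). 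What your span classification buys is independence from Lemma~\ref{Lemma counting upsion}: you read a recursion straight off Inductive Rule~A and Lemma~\ref{Lemma enlarging the edges}, at the cost of having to close a larger auxiliary system (your $N_1^{o\pm},N_1^{e\pm},N_2^{\pm},N_{\ge 3}^{\pm}$). The paper's route is shorter because the self–similarity collapses all of that span–specific bookkeeping into the single substitution $\mathscr{C}_{\pm}^{(m)}(\mathcal{E}_\ell^{(m)},\,\cdot\,)=\mathscr{C}_{\mp}^{(m-1)}(\mathcal{E}^{(m-1)},\,\cdot\,)$; the $(-2)^m$ oscillation you trace to $\delta_1^{\mathrm{odd}}$ appears there simply as the eigenvalue $-2$ of the cross–coupling $\mathcal{C}_+\leftrightarrow\mathcal{C}_-$.
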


\begin{proof} If $m=0,1$, the lemma follows from trivial
verifications (see Figure 2.3 (1)-(2) for $\Upsilon_{(0)}$ and
$\Upsilon_{(1)}$). Now we consider the case when
$$m>1.$$

By \eqref{equation ai(m)}, \eqref{equation bi(m)} and trivial
verifications, we have the following
\begin{table}[htbp]\label{Table four indexes 1}
\centering
\begin{tabular}{|c|c|c|c|c|}
  \hline
  % after \\: \hline or \cline{col1-col2} \cline{col3-col4} ...
  $j$                 & \ \ 1 \ \ & \ \ 2 \ \ & \ \ 3 \ \ & \ \ 4 \ \ \\ \hline
  $\alpha_j^{(1),\mathcal {H}^{(1)}}$    & 1 & 2 & 2 & 1 \\ \hline
  $\beta_j^{(1),\mathcal {H}^{(1)}}$     & 1 & 0 & 0 & 1 \\ \hline
  $\gamma_j^{(1),\mathcal {H}^{(1)}}$    & 0 & 1 & 3 & 5 \\ \hline
  $\xi_j^{(1),\mathcal {H}^{(1)}}$       & 0 & 1 & 1 & 1 \\ \hline
  \end{tabular}
\caption{\small{The values of $\alpha_j^{(1),\mathcal {H}^{(1)}}$,
$\beta_j^{(1),\mathcal {H}^{(1)}}$, $\gamma_j^{(1),\mathcal
{H}^{(1)}}$ and $\xi_j^{(1),\mathcal {H}^{(1)}}$ for $j=1,2,3,4$}}
\end{table}

Combined \eqref{equation H(m) cap n+m-2=empty}, Table 2.1, Lemma
\ref{Lemma enlarging the edges}, Lemma \ref{Lemma counting upsion}
and Lemma \ref{lemma m,k}, we have that
\begin{eqnarray}\label{equation C+=2C-}
\mathcal{C}_{+}^{(m)}&=&\sum\limits_{i\in[1,2^{m+2}]}\mathscr{C}_{+}^{(m)}(z_i^{(m)}) \nonumber\\
&=&2\cdot \big{(}\ \sum\limits_{i\in [1,2^{m+1}]}\mathscr{C}_{+}^{(m)}(z_i^{(m)})\ \big{)} \nonumber\\
&=& 2\cdot\big{(}\ \sum\limits_{i\in[1,2^{m+1}]}\mathscr{C}_{+}^{(m)}(\mathcal {E}_{\ell}^{(m)},z_i^{(m)})+\sum\limits_{i\in[1,2^{m+1}]}\mathscr{C}_{+}^{(m)}(\mathcal {H}^{(m)},z_i^{(m)})\ \big{)} \nonumber\\
&=& 2\cdot \big{(}\ \sum\limits_{i\in[1,2^{m+1}]}\mathscr{C}_{-}^{(m-1)}(\mathcal {E}^{(m-1)},z_i^{(m-1)})+\sum\limits_{i\in[1,2^{m+1}]}\mathscr{C}_{+}^{(m)}(\mathcal {H}^{(m)},z_i^{(m)})\ \big{)} \nonumber\\
&=& 2\cdot \big{(}\ \mathcal{C}_{-}^{(m-1)}+\sum\limits_{i\in[1,2^{m+1}]}\mathscr{C}_{+}^{(m)}(\mathcal {H}^{(m)},z_i^{(m)})\ \big{)} \nonumber\\
&=& 2\cdot \big{(}\ \mathcal{C}_{-}^{(m-1)}+4^{m-1}\cdot \sum_{j=1}^{4}\gamma_j^{(1),\mathcal {H}^{(1)}}+2^{m-2}\cdot(2^{m-1}-1)\cdot \sum_{j=1}^{4} \alpha_j^{(1),\mathcal {H}^{(1)}}\ \big{)} \nonumber\\
&=&2\cdot \mathcal{C}_{-}^{(m-1)}+(6\cdot 4^m-3\cdot 2^{m})
\end{eqnarray}
and
\begin{eqnarray}\label{equation C-=2C+}
\mathcal{C}_{-}^{(m)}&=&\sum\limits_{i\in[1,2^{m+2}]}\mathscr{C}_{-}^{(m)}(z_i^{(m)}) \nonumber\\
&=&2\cdot \big{(}\ \sum\limits_{i\in [1,2^{m+1}]}\mathscr{C}_{-}^{(m)}(z_i^{(m)})\ \big{)} \nonumber\\
&=&2\cdot \big{(}\ \sum\limits_{i\in[1,2^{m+1}]}\mathscr{C}_{-}^{(m)}(\mathcal {E}_{\ell}^{(m)},z_i^{(m)})+\sum\limits_{i\in[1,2^{m+1}]}\mathscr{C}_{-}^{(m)}(\mathcal {H}^{(m)},z_i^{(m)})\ \big{)} \nonumber\\
&=&2\cdot \big{(}\ \sum\limits_{i\in[1,2^{m+1}]}\mathscr{C}_{+}^{(m-1)}(\mathcal {E}^{(m-1)},z_i^{(m-1)})+\sum\limits_{i\in[1,2^{m+1}]}\mathscr{C}_{-}^{(m)}(\mathcal {H}^{(m)},z_i^{(m)})\ \big{)} \nonumber\\
&=&2\cdot \big{(}\ \mathcal{C}_{+}^{(m-1)}+\sum\limits_{i\in[1,2^{m+1}]}\mathscr{C}_{-}^{(m)}(\mathcal {H}^{(m)},z_i^{(m)})\ \big{)} \nonumber\\
&=&2\cdot \big{(}\ \mathcal{C}_{+}^{(m-1)}+4^{m-1}\cdot \sum_{j=1}^{4}\xi_j^{(1),\mathcal {H}^{(1)}}+2^{m-2}\cdot(2^{m-1}-1)\cdot \sum_{j=1}^{4} \beta_j^{(1),\mathcal {H}^{(1)}}\ \big{)} \nonumber\\
&=&2\cdot \mathcal{C}_{+}^{(m-1)}+(2\cdot 4^m-2^{m}).
\end{eqnarray}
Then the lemma follows from \eqref{equation C+=2C-} and
\eqref{equation C-=2C+}.
\end{proof}

\begin{lemma} \label{Lemma three conclusions on C+ and C-}
The following three conclusions hold.

{\rm (i)} For $m=1$,
$$\begin{array}{llll} & \sum\limits_{i\in \mathcal {I}_t^{(m)}}\mathscr{C}_{+}^{(m)}(\mathcal {E}^{(m)},z_i^{(m)})=\left \{\begin{array}{llll}
                0, & \mbox{ if } \ \ t\in \{1,8\};\\
                1, & \mbox{ if } \ \ t\in \{2,7\};\\
                3, & \mbox{ if } \ \ t\in \{3,6\};\\
                5, & \mbox{ if } \ \ t\in \{4,5\},\\
              \end{array}
           \right. \\
\end{array}$$
and
$$\begin{array}{llll} & \sum\limits_{i\in \mathcal {I}_t^{(m)}}\mathscr{C}_{-}^{(m)}(\mathcal {E}^{(m)},z_i^{(m)})=\left \{\begin{array}{llll}
                0, & \mbox{ if } \ \ t\in \{1,8\};\\
                3, & \mbox{ if } \ \ t\in \{2,7\};\\
                3, & \mbox{ if } \ \ t\in \{3,6\};\\
                1, & \mbox{ if } \ \ t\in \{4,5\}.\\
              \end{array}
           \right. \\
\end{array}$$

{\rm (ii)} For $m=2$,
$$\begin{array}{llll} & \sum\limits_{i\in \mathcal {I}_t^{(m)}}\mathscr{C}_{+}^{(m)}(\mathcal {E}^{(m)},z_i^{(m)})=\left \{\begin{array}{llll}
                4, & \mbox{ if } \ \ t\in \{1,8\};\\
                10, & \mbox{ if } \ \ t\in \{2,7\};\\
                18, & \mbox{ if } \ \ t\in \{3,6\};\\
                24, & \mbox{ if } \ \ t\in \{4,5\},\\
              \end{array}
           \right. \\
\end{array}$$
and
$$\begin{array}{llll} & \sum\limits_{i\in \mathcal {I}_t^{(m)}}\mathscr{C}_{-}^{(m)}(\mathcal {E}^{(m)},z_i^{(m)})=\left \{\begin{array}{llll}
                2, & \mbox{ if } \ \ t\in \{1,8\};\\
                12, & \mbox{ if } \ \ t\in \{2,7\};\\
                12, & \mbox{ if } \ \ t\in \{3,6\};\\
                6, & \mbox{ if } \ \ t\in \{4,5\}.\\
              \end{array}
           \right. \\
\end{array}$$

{\rm (iii)} For $m\geq 3$,
$$\begin{array}{llll} & \sum\limits_{i\in \mathcal {I}_t^{(m)}}\mathscr{C}_{+}^{(m)}(\mathcal {E}^{(m)},z_i^{(m)})=\left \{\begin{array}{llll}
                \frac{98}{3}\cdot4^{m-3}-(2m+\frac{13}{3}+\frac{7\cdot(1+(-1)^{m+1})}{6})\cdot2^{m-3}, & \mbox{ if } \ \ t\in \{1,8\};\\
                \frac{182}{3}\cdot4^{m-3}-(2m+\frac{19}{3}+\frac{7\cdot(1+(-1)^{m+1})}{6})\cdot2^{m-3}, & \mbox{ if } \ \ t\in \{2,7\};\\
                \frac{278}{3}\cdot4^{m-3}-(2m+\frac{19}{3}+\frac{7\cdot(1+(-1)^{m+1})}{6})\cdot2^{m-3}, & \mbox{ if } \ \ t\in \{3,6\};\\
                \frac{338}{3}\cdot4^{m-3}-(2m+\frac{13}{3}+\frac{7\cdot(1+(-1)^{m+1})}{6})\cdot2^{m-3}, & \mbox{ if } \ \ t\in \{4,5\},\\
              \end{array}
           \right. \\
\end{array}$$
and
$$\begin{array}{llll} & \sum\limits_{i\in \mathcal {I}_t^{(m)}}\mathscr{C}_{-}^{(m)}(\mathcal {E}^{(m)},z_i^{(m)})=\left \{\begin{array}{llll}
                \frac{94}{3}\cdot4^{m-3}-(2m+\frac{16}{3}+\frac{7\cdot(1+(-1)^{m})}{6})\cdot2^{m-3}, & \mbox{ if } \ \ t\in \{1,8\};\\
                \frac{202}{3}\cdot4^{m-3}-(2m+\frac{10}{3}+\frac{7\cdot(1+(-1)^{m})}{6})\cdot2^{m-3}, & \mbox{ if } \ \ t\in \{2,7\};\\
                \frac{202}{3}\cdot4^{m-3}-(2m+\frac{10}{3}+\frac{7\cdot(1+(-1)^{m})}{6})\cdot2^{m-3}, & \mbox{ if } \ \ t\in \{3,6\};\\
                \frac{142}{3}\cdot4^{m-3}-(2m+\frac{16}{3}+\frac{7\cdot(1+(-1)^{m})}{6})\cdot2^{m-3}, & \mbox{ if } \ \ t\in \{4,5\}.\\
              \end{array}
           \right. \\
\end{array}$$
\end{lemma}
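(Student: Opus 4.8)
The plan is to verify (i) directly from the explicit drawing $\Upsilon_{(1)}$, and then to derive both (ii) and (iii) from a single recursion expressing the eight block–sums at level $m$ through those at level $m-1$. Write $P_t^{(m)}=\sum_{i\in\mathcal{I}_t^{(m)}}\mathscr{C}_{+}^{(m)}(\mathcal{E}^{(m)},z_i^{(m)})$ and $Q_t^{(m)}=\sum_{i\in\mathcal{I}_t^{(m)}}\mathscr{C}_{-}^{(m)}(\mathcal{E}^{(m)},z_i^{(m)})$. The first reduction is a reflection symmetry: combining the two equalities in Lemma \ref{Lemma counting upsion}(2) gives $\mathcal{O}(z_i^{(m)}z_j^{(m)})=\mathcal{O}(z_{2^{m+2}+1-i}^{(m)}z_{2^{m+2}+1-j}^{(m)})$, and since $i\mapsto 2^{m+2}+1-i$ preserves edge membership (Lemma \ref{Lemma counting upsion}(1)) and the covering relation, it preserves $\mathscr{C}_{\pm}^{(m)}$; as it carries $\mathcal{I}_t^{(m)}$ onto $\mathcal{I}_{9-t}^{(m)}$ we obtain $P_t^{(m)}=P_{9-t}^{(m)}$ and $Q_t^{(m)}=Q_{9-t}^{(m)}$, so it suffices to treat $t\in\{1,2,3,4\}$.

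To set up the recursion, fix a left–half vertex $z_i^{(m)}$ with $i\in[1,2^{m+1}]$; the only edges of $\mathcal{E}^{(m)}$ that can cover it lie in $\mathcal{E}_{\ell}^{(m)}$ or in $\mathcal{H}^{(m)}$. For the first group I use the orientation–flip self–similarity encoded in Lemma \ref{Lemma counting upsion}: on the index range $[1,2^{m+1}]$ the edges of $\mathcal{E}^{(m)}$ are exactly those of $\mathcal{E}^{(m-1)}$ with every orientation reversed, so $\mathscr{C}_{+}^{(m)}(\mathcal{E}_{\ell}^{(m)},z_i^{(m)})=\mathscr{C}_{-}^{(m-1)}(z_i^{(m-1)})$ and conversely. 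Summing over $\mathcal{I}_t^{(m)}=\mathcal{I}_{2t-1}^{(m-1)}\cup\mathcal{I}_{2t}^{(m-1)}$ turns this part into $Q_{2t-1}^{(m-1)}+Q_{2t}^{(m-1)}$ for $P_t^{(m)}$ (and $P_{2t-1}^{(m-1)}+P_{2t}^{(m-1)}$ for $Q_t^{(m)}$), with any index exceeding $4$ folded back by the reflection above. For the second group I use $\mathcal{H}^{(m)}=\Omega^{(m-1)}(\mathcal{H}^{(1)})$ and $\mathcal{I}_t^{(m)}=\Omega^{(m-1)}(\{z_t^{(1)}\})$, and apply Lemma \ref{lemma m,k} with $m:=1$, $k:=m-1$ to each edge of $\mathcal{H}^{(1)}$: a covering edge contributes $4^{m-1}$ and an incident edge contributes $2^{m-2}(2^{m-1}-1)$ to the relevant block. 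Reading the counts off Table 2.1 and \eqref{equation ai(m)}–\eqref{equation bi(m)}, the $\mathcal{H}^{(m)}$ part of $P_t^{(m)}$ becomes $4^{m-1}\gamma_t^{(1),\mathcal{H}^{(1)}}+2^{m-2}(2^{m-1}-1)\alpha_t^{(1),\mathcal{H}^{(1)}}$, with the $\xi,\beta$ analogue for $Q_t^{(m)}$. Thus, for $t\in\{1,2,3,4\}$,
$$P_t^{(m)}=Q_{2t-1}^{(m-1)}+Q_{2t}^{(m-1)}+4^{m-1}\gamma_t^{(1),\mathcal{H}^{(1)}}+2^{m-2}(2^{m-1}-1)\alpha_t^{(1),\mathcal{H}^{(1)}},$$
together with the mirror recursion for $Q_t^{(m)}$.

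With the recursion in hand, (i) is checked by inspecting $\Upsilon_{(1)}$; feeding those values in once reproduces (ii), and induction on $m$ then yields (iii). The recursion is a coupled linear system in the eight sequences $P_1^{(m)},\dots,P_4^{(m)},Q_1^{(m)},\dots,Q_4^{(m)}$ in which each step interchanges the roles of $P$ and $Q$; this interchange produces the sign alternation $(-1)^m$ and hence the parity factor $\tfrac{7(1+(-1)^{m})}{6}$, while the inhomogeneous terms $4^{m-1}$ and $2^{m-2}(2^{m-1}-1)$ generate the $4^{m-3}$ and $2^{m-3}$ scales. One then verifies that the closed forms in (iii) satisfy the recursion and agree at $m=3$ with the values computed from (ii), fixing the free constants. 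A convenient global check is that $\sum_{t=1}^{8}P_t^{(m)}=\mathcal{C}_{+}^{(m)}$ and $\sum_{t=1}^{8}Q_t^{(m)}=\mathcal{C}_{-}^{(m)}$ reproduce Lemma \ref{Lemma C+ and C-}.

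The step I expect to be the main obstacle is justifying the $\mathcal{H}^{(m)}$ contribution for the one crossing edge that escapes the hypothesis $i<j-2$ of Lemma \ref{lemma m,k}, namely the adjacent edge $z_4^{(1)}z_5^{(1)}$, which arises as $\pi(z_2^{(0)})\pi(z_3^{(0)})$ in the expansion of $z_2^{(0)}z_3^{(0)}$. For this edge the condition $i<j-2$ fails, so one must check separately that its $\Omega^{(m-1)}$–expansion still covers the middle blocks exactly $2^{m-2}(2^{m-1}-1)$ times, so that the incident count $\alpha_4^{(1),\mathcal{H}^{(1)}}$ (resp. $\beta_4^{(1),\mathcal{H}^{(1)}}$) may be used in the formula without correction; the direct passage from (i) to (ii) through the recursion is a reassuring sign that no correction is in fact needed. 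After that is secured, the residual work is the bookkeeping of solving the coupled recursion and matching every constant, in particular the parity–dependent terms, against the base values.
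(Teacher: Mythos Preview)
Your approach is correct but genuinely different from the paper's. The paper does not set up a one-step recursion $m\to m-1$; instead, for (iii) it drops three levels at once. Concretely, for each $t\in[1,8]$ the paper splits $\mathcal{E}^{(m)}$ into the block's internal edges $\mathcal{E}_t^{(m)}=E(\{z_i^{(m)}:i\in\mathcal{I}_t^{(m)}\})$ and the cross-block edges $\mathcal{K}^{(m)}=\mathcal{E}^{(m)}\setminus\bigcup_{s=1}^8\mathcal{E}_s^{(m)}$. By self-similarity the internal part contributes exactly $\mathcal{C}_{-}^{(m-3)}$ (respectively $\mathcal{C}_{+}^{(m-3)}$), already known from Lemma~\ref{Lemma C+ and C-}; the cross-block part is handled by Lemma~\ref{lemma m,k} with base level~$3$, reading the sixteen values of $\alpha_j^{(3),\mathcal{K}^{(3)}},\beta_j^{(3),\mathcal{K}^{(3)}},\gamma_j^{(3),\mathcal{K}^{(3)}},\xi_j^{(3),\mathcal{K}^{(3)}}$ from a table (Table~2.2). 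This gives the closed form in (iii) directly, with no recursion to solve. Your method trades that larger table for the four-entry Table~2.1 and an explicit coupled recursion whose solution you must then verify; the paper trades the algebra of solving a recursion for a larger one-time tabulation.

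The obstacle you flag---the adjacent edge $z_4^{(1)}z_5^{(1)}\in\mathcal{H}^{(1)}$ escaping the hypothesis $i<j-2$ of Lemma~\ref{lemma m,k}---is real, and interestingly the paper's route does not avoid it: the descendant $z_{16}^{(3)}z_{17}^{(3)}\in\mathcal{K}^{(3)}$ has $|i-j|=1$ as well, and the paper applies Lemma~\ref{lemma m,k} to it without comment. In both approaches the point is that this particular edge (and all its descendants with $|i-j|=1$) has $k$ even in Inductive Rule~A and hence always expands via the ``otherwise'' branch (two children, same orientation), so the conclusion of Lemma~\ref{lemma m,k} holds for it even though its hypothesis does not. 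Your numerical check $(i)\Rightarrow(ii)$ through the recursion already confirms this, but it is worth stating the reason.
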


\begin{proof} (i) and (ii) follows from straight verifications (see Figure 2.3). Now we
let $m\geq 3$ and prove (iii).

For $t\in [1,8]$, let
$$\mathcal {E}_t^{(m)}=E(\{z_i^{(m)}:i\in \mathcal {I}_t^{(m)}\}),$$
$$\overline{\mathcal {E}_t^{(m)}}=\mathcal {E}^{(m)}\setminus \mathcal {E}_t^{(m)}.$$
Let $$\mathcal {K}^{(m)}=\mathcal {E}^{(m)}\setminus \cup_{i=1}^8
\mathcal {E}_i^{(m)}.$$

By \eqref{equation definition Itm} and Lemma \ref{Lemma Claim A}, we
have that
\begin{equation}\label{equation F cap n+m-2=empty}
|i-j|\neq 2 \  \mbox{ for any } z_i^{(m)}z_j^{(m)}\in \mathcal
{K}^{(m)} \mbox{ with } m\geq 3.
\end{equation}

Combined \eqref{equation F cap n+m-2=empty}, Lemma \ref{Lemma
enlarging the edges}, Lemma \ref{Lemma counting upsion}, Lemma
\ref{lemma m,k} and Lemma \ref{Lemma C+ and C-}, we have that for
any $t\in [1,8]$,
\begin{eqnarray}\label{equation +, t in [1,8]}
& &\sum\limits_{i\in
\mathcal {I}_t^{(m)}}\mathscr{C}_{+}^{(m)}(\mathcal {E}^{(m)},z_i^{(m)})\nonumber \\
&=&\sum\limits_{i\in \mathcal
{I}_t^{(m)}}\mathscr{C}_{+}^{(m)}(\mathcal
{E}_t^{(m)},z_i^{(m)})+\sum\limits_{i\in
\mathcal {I}_t^{(m)}}\mathscr{C}_{+}^{(m)}(\overline{\mathcal {E}_t^{(m)}},z_i^{(m)})\nonumber \\
&=&\sum\limits_{i\in \mathcal
{I}_t^{(m)}}\mathscr{C}_{+}^{(m)}(\mathcal
{E}_t^{(m)},z_i^{(m)})+\sum\limits_{i\in
\mathcal {I}_t^{(m)}}\mathscr{C}_{+}^{(m)}(\mathcal {K}^{(m)},z_i^{(m)})\nonumber \\
&=&\mathcal {C}_{-}^{(m-3)}+\sum\limits_{i\in
\mathcal {I}_t^{(m)}}\mathscr{C}_{+}^{(m)}(\mathcal {K}^{(m)},z_i^{(m)})\nonumber \\
&=&\mathcal {C}_{-}^{(m-3)}+\big{(}\ 4^{m-3}\cdot
\sum_{j=4(t-1)+1}^{4t} \gamma_j^{(3),\mathcal
{K}^{(3)}}+2^{m-4}\cdot (2^{m-3}-1)\cdot \sum_{j=4(t-1)+1}^{4t}
\alpha_j^{(3),\mathcal {K}^{(3)}}\ \big{)}
\end{eqnarray}
and similarly,
\begin{eqnarray}\label{equation -, t in [1,8]}
\sum\limits_{i\in \mathcal
{I}_t^{(m)}}\mathscr{C}_{-}^{(m)}(\mathcal {E}^{(m)},z_i^{(m)})
&=&\mathcal {C}_{+}^{(m-3)}+\big{(}\ 4^{m-3}\cdot
\sum_{j=4(t-1)+1}^{4t} \xi_j^{(3),\mathcal {K}^{(3)}}+2^{m-4}\cdot
(2^{m-3}-1)\cdot \sum_{j=4(t-1)+1}^{4t}
\beta_j^{(3),\mathcal {K}^{(3)}}\ \big{)}.\nonumber\\
\end{eqnarray}

It is easy to get the values in Table 2.2.
\begin{table}[htbp]\label{Table four indexes 2}
\centering
\begin{tabular}{|c|c|c|c|c|c|c|c|c|c|c|c|c|c|c|c|c|}
  \hline
  % after \\: \hline or \cline{col1-col2} \cline{col3-col4} ...
  $j$                & 1 & 2 & 3 & 4  & 5  & 6  & 7  & 8  & 9  & 10 & 11 & 12 & 13 & 14 & 15 & 16 \\ \hline
  $\alpha_j^{(3),\mathcal {K}^{(3)}}$   & 3 & 4 & 3 & 2  & 3  & 4  & 5  & 4  & 4  & 5  & 4  & 3  & 2  & 3  & 4  & 3  \\ \hline
  $\beta_j^{(3),\mathcal {K}^{(3)}}$    & 3 & 2 & 3 & 4  & 3  & 2  & 1  & 2  & 2  & 1  & 2  & 3  & 4  & 3  & 2  & 3  \\ \hline
  $\gamma_j^{(3),\mathcal {K}^{(3)}}$   & 0 & 3 & 7 & 10 & 11 & 11 & 11 & 13 & 15 & 17 & 21 & 25 & 27 & 26 & 24 & 23 \\ \hline
  $\xi_j^{(3),\mathcal {K}^{(3)}}$      & 0 & 3 & 5 & 8  & 11 & 13 & 15 & 15 & 15 & 15 & 13 & 11 & 9  & 8  & 8  & 7  \\ \hline
  \end{tabular}
\caption{\small{The values of $\alpha_j^{(3),\mathcal {K}^{(3)}}$,
$\beta_j^{(3),\mathcal {K}^{(3)}}$, $\gamma_j^{(3),\mathcal
{K}^{(3)}}$ and $\xi_j^{(3),\mathcal {K}^{(3)}}$ for $j\in [1,16]$}}
\end{table}

Then Conclusion (iii) follows from \eqref{equation +, t in [1,8]},
\eqref{equation -, t in [1,8]}, Table 2.2 and Lemma \ref{Lemma C+
and C-} readily.
\end{proof}

In the following lemma, we shall count the crossings of
$\Upsilon_{(m)}$ for all $m\geq 1$.

\begin{lemma}\label{Lemma counting nu H(m)}
For $m\geq 1$, $\nu_{\Upsilon_{(m)}}(\mathcal {H}^{(m)})=6\cdot
4^{m-1}-2^{m+1}.$
\end{lemma}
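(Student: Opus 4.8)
The plan is to prove the formula by induction on $m$, exploiting the recursive construction of $\Upsilon_{(m)}$ from $\Upsilon_{(m-1)}$. For the base case $m=1$ one reads off directly from $\Upsilon_{(1)}$ (Figure 2.3(2)) that $\nu_{\Upsilon_{(1)}}(\mathcal{H}^{(1)})=2=6\cdot 4^{0}-2^{2}$. For the inductive step I would fix $m\ge 2$ and use that the left/right splitting of the $2^{m+2}$ vertices at level $m$ is exactly the image, under the vertex-doubling $z_i^{(m-1)}\mapsto\{z_{2i-1}^{(m)},z_{2i}^{(m)}\}$ of \eqref{equation zi enlarging two}--\eqref{equation zi enlarging pi}, of the left/right splitting at level $m-1$. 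By Conclusion~3 of Lemma~\ref{Lemma enlarging the edges}, every edge of $\mathcal{H}^{(m)}$ is a lift $E[\Omega(\{z_k^{(m-1)}\}),\Omega(\{z_\ell^{(m-1)}\})]$ of a unique $z_k^{(m-1)}z_\ell^{(m-1)}\in\mathcal{H}^{(m-1)}$. Since no edge of $\mathcal{H}^{(m-1)}$ has the most negative dimension (its endpoints would satisfy $|k-\ell|=1$ across the midpoint $2^{m}$, impossible as $2^{m}$ is even) nor $|k-\ell|=2$ (by \eqref{equation H(m) cap n+m-2=empty}), Conclusion~2 of Lemma~\ref{Lemma enlarging the edges} together with Inductive Rule~A show each such edge splits into exactly two lifts $\{z_{p_k}^{(m)}z_{p_\ell}^{(m)},\,z_{q_k}^{(m)}z_{q_\ell}^{(m)}\}$, both drawn on the same side as $z_k^{(m-1)}z_\ell^{(m-1)}$, where $\{p_i,q_i\}=\{2i-1,2i\}$ with $p_i=2i-1$ if $i$ is odd and $p_i=2i$ if $i$ is even. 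This yields a clean $2$-to-$1$ correspondence $\mathcal{H}^{(m)}\to\mathcal{H}^{(m-1)}$.

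I would then split the crossings of $\mathcal{H}^{(m)}$ along this correspondence: a crossing is either (A) between the two lifts of a single edge of $\mathcal{H}^{(m-1)}$, or (B) between lifts of two distinct edges $e,e'\in\mathcal{H}^{(m-1)}$. For (B) with $e,e'$ sharing no vertex, the four clusters involved are pairwise disjoint, so a short interleaving check shows: if $e,e'$ cross in $\Upsilon_{(m-1)}$ (necessarily on the same side) then all four lift-pairs cross, while in every other case (nested, disjoint, or opposite sides) none do. As crossing edges are never adjacent, these vertex-disjoint pairs contribute exactly $4\,\nu_{\Upsilon_{(m-1)}}(\mathcal{H}^{(m-1)})$.

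It remains to evaluate (A) and the contribution of adjacent pairs. For (A), the two lifts lie on the same side and interleave precisely when $k\equiv\ell\pmod 2$; and since doubling sends each endpoint parity to itself in one lift and to its opposite in the other, an edge of $\mathcal{H}^{(m-1)}$ has both lifts same-parity iff the edge is itself same-parity. Hence the number $P_m$ of same-parity edges of $\mathcal{H}^{(m)}$ satisfies $P_m=2P_{m-1}$ with $P_0=2$, giving $P_{m-1}=2^{m}$. For adjacent pairs, two edges of $\mathcal{H}^{(m-1)}$ sharing a vertex $z_s^{(m-1)}$ can produce a crossing only when both lie on the same side; because $\beta_s^{(m-1),\mathcal{H}^{(m-1)}}\le 1$ by \eqref{equation bi(m)}, this forces an above-pair, i.e. $\alpha_s^{(m-1),\mathcal{H}^{(m-1)}}=2$. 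At such a vertex a direct interleaving computation, using only the order of the two outer clusters and the parity of $s$, shows that exactly one of the two candidate lift-pairs crosses. Thus adjacent pairs contribute the number of vertices with $\alpha_s^{(m-1),\mathcal{H}^{(m-1)}}=2$, which by \eqref{equation ai(m)} and \eqref{equation definition Itm} equals $|\mathcal{I}_2^{(m-1)}\cup\mathcal{I}_3^{(m-1)}\cup\mathcal{I}_6^{(m-1)}\cup\mathcal{I}_7^{(m-1)}|=2^{m}$.

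Combining the three contributions gives the recurrence $\nu_{\Upsilon_{(m)}}(\mathcal{H}^{(m)})=4\,\nu_{\Upsilon_{(m-1)}}(\mathcal{H}^{(m-1)})+2^{m}+2^{m}=4\,\nu_{\Upsilon_{(m-1)}}(\mathcal{H}^{(m-1)})+2^{m+1}$ for $m\ge 2$, which with $\nu_{\Upsilon_{(1)}}(\mathcal{H}^{(1)})=2$ solves to $6\cdot 4^{m-1}-2^{m+1}$ by the standard linear-recurrence computation. I expect the main obstacle to be the adjacent-pair term: the naive guess $\nu_{\Upsilon_{(m)}}=4\,\nu_{\Upsilon_{(m-1)}}$ already fails, since edges that do not cross (being adjacent) can nevertheless acquire a crossing after doubling. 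The delicate point is verifying that each same-side adjacent pair at an $\alpha=2$ vertex contributes exactly one crossing — neither zero nor two — which is precisely what makes the two correction terms combine into the clean $2^{m+1}$.
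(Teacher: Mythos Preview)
Your proposal is correct and follows essentially the same route as the paper: the paper also verifies $m=1$ directly, proves your (A) as its Claim~B, your adjacent-pair count as its Claim~A together with $\sum_i\binom{\alpha_i^{(m-1),\mathcal H^{(m-1)}}}{2}+\sum_i\binom{\beta_i^{(m-1),\mathcal H^{(m-1)}}}{2}$, and your same-parity count as \eqref{equation sharp= 2exp(m+1)}, arriving at the identical recurrence $\nu_{\Upsilon_{(m)}}(\mathcal H^{(m)})=4\,\nu_{\Upsilon_{(m-1)}}(\mathcal H^{(m-1)})+2^{m+1}$. Your treatment of the adjacent-pair term is in fact slightly more careful than the paper's Claim~A, since you explicitly restrict to same-side pairs before asserting the single crossing.
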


\begin{proof} By \eqref{equation zi enlarging two}, \eqref{equation zi enlarging pi}, \eqref{equation H(m) cap n+m-2=empty} and Lemma
\ref{Lemma enlarging the edges}, we have the following

\textbf{Claim A.} Let $z_i^{(m)}z_j^{(m)}$ and $z_i^{(m)}z_k^{(m)}$
be two distinct edges of $\mathcal {H}^{(m)}$ incident to
$z_i^{(m)}$. Then
$$\nu_{\Upsilon_{(m+1)}}\big{(}\ E[\Omega(\{z_i^{(m)}\}), \Omega(\{z_j^{(m)}\})],\ E[\Omega(\{z_i^{(m)}\}),\Omega(\{z_k^{(m)}\})]\ \big{)}=1.$$

Now we show

\textbf{Claim B.} Let $z_i^{(m)}z_j^{(m)}$ be an arbitrary edge of
$\mathcal {H}^{(m)}$. Then
$$\begin{array}{llll}
\nu_{\Upsilon_{(m+1)}}\big{(}\ E[\Omega(\{z_i^{(m)}\}),
\Omega(\{z_j^{(m)}\})]\ \big{)}= \left \{\begin{array}{llll}
               1, & \mbox{ if } i\equiv j\pmod 2;\\
               0, & \mbox{ if } i\not\equiv j\pmod 2.\\
              \end{array}
           \right. \\
\end{array}$$

{\sl Proof of Claim B.} \ Say $i<j$. By \eqref{equation zi enlarging
two}, \eqref{equation zi enlarging pi}, \eqref{equation H(m) cap
n+m-2=empty} and Lemma \ref{Lemma enlarging the edges}, we have that
$$\begin{array}{llll}
& &E[\Omega(\{z_i^{(m)}\}), \Omega(\{z_j^{(m)}\})]\\
&=& \{\pi(z_i^{(m)})\pi(z_j^{(m)}), \
\Widehat{\pi(z_i^{(m)})}\Widehat{\pi(z_j^{(m)})}\}\\
&=&\{z_{2i-\frac{1+(-1)^{i-1}}{2}}^{(m+1)}z_{2j-\frac{1+(-1)^{j-1}}{2}}^{(m+1)},
\
z_{2i-\frac{1+(-1)^{i}}{2}}^{(m+1)}z_{2j-\frac{1+(-1)^{j}}{2}}^{(m+1)}\}
\end{array}$$
Also, since $\mathcal
{O}(z_{2i-\frac{1+(-1)^{i-1}}{2}}^{(m+1)}z_{2j-\frac{1+(-1)^{j-1}}{2}}^{(m+1)})=\mathcal
{O}(z_{2i-\frac{1+(-1)^{i}}{2}}^{(m+1)}z_{2j-\frac{1+(-1)^{j}}{2}}^{(m+1)})=\mathcal
{O}(z_{i}^{(m)}z_{j}^{(m)})$, we have Claim B proved. \qed

In a similar argument as Claim B, by induction on $m$, we have
\begin{equation}\label{equation sharp= 2exp(m+1)}
|\{z_i^{(m)}z_j^{(m)}\in \mathcal {H}^{(m)}:i\equiv j\pmod
2\}|=2^{m+1}.
\end{equation}

Now we proceed with the proof of the lemma.

If $m=1$, the lemma holds trivially (see Figure 2.3). Suppose $m>1$.
By \eqref{equation H(m) cap n+m-2=empty}, \eqref{equation ai(m)},
\eqref{equation bi(m)}, \eqref{equation sharp= 2exp(m+1)}, Claim A,
Claim B and Lemma \ref{Lemma enlarging the edges}, we have that
$$\begin{array}{llll}
& &\nu_{\Upsilon_{(m)}}(\mathcal {H}^{(m)})\\
&=&4\cdot
\nu_{\Upsilon_{(m-1)}}(\mathcal
{H}^{(m-1)})+|\{z_i^{(m-1)}z_j^{(m-1)}\in
\mathcal {H}^{(m-1)}:i\equiv j\pmod 2\}|\\
& &+\sum\limits_{i=1}^{2^{m+1}}{\alpha_i^{(m-1),\mathcal {H}^{(m-1)}}\choose 2}+\sum\limits_{i=1}^{2^{m+1}}{\beta_i^{(m-1),\mathcal {H}^{(m-1)}}\choose 2}\\
&=&4\cdot \nu_{\Upsilon_{(m-1)}}(\mathcal
{H}^{(m-1)})+2^{m}\\
& &+\sum\limits_{i\in \mathcal {I}_1^{(m-1)}\cup \mathcal
{I}_4^{(m-1)}\cup\mathcal {I}_5^{(m-1)}\cup\mathcal {I}_8^{(m-1)}
}{\alpha_i^{(m-1),\mathcal {H}^{(m-1)}}\choose 2}+\sum\limits_{i\in
\mathcal {I}_2^{(m-1)}\cup \mathcal {I}_3^{(m-1)}\cup\mathcal
{I}_6^{(m-1)}\cup\mathcal {I}_7^{(m-1)} }{\alpha_i^{(m-1),\mathcal
{H}^{(m-1)}}\choose 2}\\
& & +\sum\limits_{i\in \mathcal {I}_1^{(m-1)}\cup \mathcal
{I}_4^{(m-1)}\cup\mathcal {I}_5^{(m-1)}\cup\mathcal {I}_8^{(m-1)}
}{\beta_i^{(m-1),\mathcal {H}^{(m-1)}}\choose 2} + \sum\limits_{i\in
\mathcal {I}_2^{(m-1)}\cup \mathcal {I}_3^{(m-1)}\cup\mathcal
{I}_6^{(m-1)}\cup\mathcal
{I}_7^{(m-1)} }{\beta_i^{(m-1),\mathcal {H}^{(m-1)}}\choose 2}\\
&=&4\cdot \nu_{\Upsilon_{(m-1)}}(\mathcal
{H}^{(m-1)})+2^{m}\\
& &+\sum\limits_{i\in \mathcal {I}_1^{(m-1)}\cup \mathcal
{I}_4^{(m-1)}\cup\mathcal {I}_5^{(m-1)}\cup\mathcal {I}_8^{(m-1)}
}{1\choose 2}+\sum\limits_{i\in \mathcal {I}_2^{(m-1)}\cup \mathcal
{I}_3^{(m-1)}\cup\mathcal {I}_6^{(m-1)}\cup\mathcal {I}_7^{(m-1)}
}{2\choose 2}\\
& & +\sum\limits_{i\in \mathcal {I}_1^{(m-1)}\cup \mathcal
{I}_4^{(m-1)}\cup\mathcal {I}_5^{(m-1)}\cup\mathcal {I}_8^{(m-1)}
}{1\choose 2} + \sum\limits_{i\in \mathcal {I}_2^{(m-1)}\cup
\mathcal {I}_3^{(m-1)}\cup\mathcal {I}_6^{(m-1)}\cup\mathcal
{I}_7^{(m-1)} }{0\choose 2}\\
&=&4\cdot \nu_{\Upsilon_{(m-1)}}(\mathcal
{H}^{(m-1)})+2^{m}+2^{m}\\
&=&4\cdot \nu_{\Upsilon_{(m-1)}}(\mathcal {H}^{(m-1)})+2^{m+1},\\
\end{array}$$
and thus, the lemma follows immediately.
\end{proof}

\medskip

\begin{lemma}\label{Lemma nu(H,El)}
$$\begin{array}{llll}
\nu_{\Upsilon_{(m)}}(\mathcal {H}^{(m)},\mathcal {E}_{\ell}^{(m)})=
\left \{\begin{array}{llll}
               24, & \mbox{ if } m=2;\\
               128, & \mbox{ if } m=3;\\
               \frac{158}{3}\cdot 4^{m-2}-(8m+\frac{38}{3}+\frac{7\cdot(1+(-1)^{m-1})}{3})\cdot 2^{m-2}, & \mbox{ if } m\geq 4.\\
              \end{array}
           \right. \\
\end{array}$$
\end{lemma}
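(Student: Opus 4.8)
The plan is to reduce the crossing count $\nu_{\Upsilon_{(m)}}(\mathcal{H}^{(m)},\mathcal{E}_{\ell}^{(m)})$ to covering numbers that are already under control, and then to finish with a (lengthy but routine) algebraic simplification. First I would pin down exactly when an edge $e=z_a^{(m)}z_b^{(m)}\in\mathcal{H}^{(m)}$, with left end $a\in[1,2^{m+1}]$ and right end $b\in[2^{m+1}+1,2^{m+2}]$, crosses an edge $f\in\mathcal{E}_{\ell}^{(m)}$. Both ends of $f$ lie in $[1,2^{m+1}]$, hence strictly to the left of $b$, so two same-side semicircles interleave precisely when $f$ covers the left end $z_a^{(m)}$. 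The key simplifying point is that \emph{no nesting can occur here}: since $e$ always reaches past $f$ into the right half, $f$ covering $z_a^{(m)}$ on the side $\mathcal{O}(e)$ already forces a crossing. Thus $e$ and $f$ cross if and only if $f$ covers $z_a^{(m)}$ and $\mathcal{O}(e)=\mathcal{O}(f)$.

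Grouping the $\mathcal{H}^{(m)}$-edges by their (necessarily left) endpoint and splitting by side yields the master identity
\[
\nu_{\Upsilon_{(m)}}(\mathcal{H}^{(m)},\mathcal{E}_{\ell}^{(m)})
=\sum_{a\in[1,2^{m+1}]}\Big(\alpha_a^{(m),\mathcal{H}^{(m)}}\,\mathscr{C}_{+}^{(m)}(\mathcal{E}_{\ell}^{(m)},z_a^{(m)})
+\beta_a^{(m),\mathcal{H}^{(m)}}\,\mathscr{C}_{-}^{(m)}(\mathcal{E}_{\ell}^{(m)},z_a^{(m)})\Big).
\]
Feeding in the explicit incidence values \eqref{equation ai(m)} and \eqref{equation bi(m)} (where $\alpha=\beta=1$ on $\mathcal{I}_1^{(m)}\cup\mathcal{I}_4^{(m)}$, while $\alpha=2,\beta=0$ on $\mathcal{I}_2^{(m)}\cup\mathcal{I}_3^{(m)}$), the right-hand side collapses to the four octant totals $S_t^{\pm}:=\sum_{a\in\mathcal{I}_t^{(m)}}\mathscr{C}_{\pm}^{(m)}(\mathcal{E}_{\ell}^{(m)},z_a^{(m)})$, namely $\sum_{t\in\{1,4\}}(S_t^{+}+S_t^{-})+2\sum_{t\in\{2,3\}}S_t^{+}$.

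Next I would separate the left-internal coverings from the full ones. For a left-half vertex $z_a^{(m)}$ no edge of $\mathcal{E}_r^{(m)}$ can cover it, so the partition $\mathcal{E}^{(m)}=\mathcal{E}_{\ell}^{(m)}\cup\mathcal{E}_r^{(m)}\cup\mathcal{H}^{(m)}$ gives $\mathscr{C}_{\pm}^{(m)}(\mathcal{E}_{\ell}^{(m)},z_a^{(m)})=\mathscr{C}_{\pm}^{(m)}(\mathcal{E}^{(m)},z_a^{(m)})-\mathscr{C}_{\pm}^{(m)}(\mathcal{H}^{(m)},z_a^{(m)})$. The octant totals of the first term are exactly the quantities evaluated in Lemma \ref{Lemma three conclusions on C+ and C-}(iii). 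For the second term I would write $\mathcal{H}^{(m)}=\Omega^{(m-3)}(\mathcal{H}^{(3)})$ (every $\mathcal{H}$-edge descends from level $3$, and none is a lowest-dimension ``twin'' edge, so each spawns exactly two images at every step) and apply Lemma \ref{lemma m,k} edgewise, just as in the proof of Lemma \ref{Lemma three conclusions on C+ and C-}(iii). Since $\Omega$ preserves the octants $\mathcal{I}_t$, the total $\sum_{a\in\mathcal{I}_t^{(m)}}\mathscr{C}_{\pm}^{(m)}(\mathcal{H}^{(m)},z_a^{(m)})$ becomes $4^{m-3}$ times a sum of base-level $\gamma$/$\xi$-values over $\mathcal{I}_t^{(3)}$ plus $2^{m-4}(2^{m-3}-1)$ times a sum of base-level $\alpha$/$\beta$-values, all referring now to $\mathcal{H}^{(3)}$ rather than to the full edge set.

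The main obstacle is precisely this $\mathcal{H}$-covering computation: it requires building the octant-by-octant table of $\alpha_j^{(3),\mathcal{H}^{(3)}},\beta_j^{(3),\mathcal{H}^{(3)}},\gamma_j^{(3),\mathcal{H}^{(3)}},\xi_j^{(3),\mathcal{H}^{(3)}}$ (an analogue of Table 2.2 restricted to $\mathcal{H}^{(3)}$), and, because Lemma \ref{lemma m,k} needs $|i-j|>2$, treating by hand the short boundary edges of $\mathcal{H}^{(3)}$ with $|i-j|=1$ (the case $|i-j|=2$ being excluded by \eqref{equation H(m) cap n+m-2=empty}); throughout one must keep the parity terms $1+(-1)^{m}$ straight. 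Once these totals are in hand, subtracting them from the Lemma \ref{Lemma three conclusions on C+ and C-}(iii) values and assembling $\sum_{t\in\{1,4\}}(S_t^{+}+S_t^{-})+2\sum_{t\in\{2,3\}}S_t^{+}$ is a routine simplification that should produce $\frac{158}{3}\cdot 4^{m-2}-(8m+\frac{38}{3}+\frac{7(1+(-1)^{m-1})}{3})\cdot2^{m-2}$ for $m\ge 4$, while the two remaining cases $m=2,3$ I would check directly against the drawings $\Upsilon_{(2)},\Upsilon_{(3)}$ of Figure 2.3.
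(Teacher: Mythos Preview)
Your master identity and the collapse to $\sum_{t\in\{1,4\}}(S_t^{+}+S_t^{-})+2\sum_{t\in\{2,3\}}S_t^{+}$ are exactly what the paper does. The divergence is in how the $S_t^{\pm}=\sum_{a\in\mathcal{I}_t^{(m)}}\mathscr{C}_{\pm}^{(m)}(\mathcal{E}_{\ell}^{(m)},z_a^{(m)})$ are evaluated. You split $\mathscr{C}_{\pm}^{(m)}(\mathcal{E}_{\ell}^{(m)},\cdot)=\mathscr{C}_{\pm}^{(m)}(\mathcal{E}^{(m)},\cdot)-\mathscr{C}_{\pm}^{(m)}(\mathcal{H}^{(m)},\cdot)$, invoke Lemma~\ref{Lemma three conclusions on C+ and C-} at level $m$ for the first term, and propose a fresh $\mathcal{H}^{(3)}$-table plus short-edge patching for the second. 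The paper instead uses the structural fact encoded in Lemma~\ref{Lemma counting upsion}: the left half $\langle\{z_i^{(m)}:i\le 2^{m+1}\}\rangle$ of $\Upsilon_{(m)}$ is, as a drawing, a copy of $\Upsilon_{(m-1)}$ with every edge's side flipped. Hence $\mathscr{C}_{+}^{(m)}(\mathcal{E}_{\ell}^{(m)},z_a^{(m)})=\mathscr{C}_{-}^{(m-1)}(\mathcal{E}^{(m-1)},z_a^{(m-1)})$ (and similarly with $+\leftrightarrow-$), and since $\mathcal{I}_t^{(m)}=\mathcal{I}_{2t-1}^{(m-1)}\cup\mathcal{I}_{2t}^{(m-1)}$ as index sets, the $S_t^{\pm}$ become sums of the level-$(m-1)$ quantities already tabulated in Lemma~\ref{Lemma three conclusions on C+ and C-}. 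This one-line reduction eliminates the need for any $\mathcal{H}$-covering computation, extra table, or special treatment of the $|i-j|=1$ boundary edges. Your route is sound and would reach the same formula, but it duplicates work that the sign-flip identification does for free; you are essentially recomputing the difference $\mathscr{C}_{\pm}^{(m)}-\mathscr{C}_{\mp}^{(m-1)}$ by hand when Lemma~\ref{Lemma counting upsion} tells you it equals the $\mathcal{H}^{(m)}$-contribution already.
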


\begin{proof}  By \eqref{equation ai(m)},
\eqref{equation bi(m)} and Lemma \ref{Lemma counting upsion}, we
have that, for all $m\geq 2$, \begin{eqnarray}
& &\nu_{\Upsilon_{(m)}}(\mathcal {H}^{(m)},\mathcal {E}_{\ell}^{(m)})\nonumber \\
&=&\sum\limits_{i\in \bigcup\limits_{t=1}^4\mathcal
{I}_t^{(m)}}\alpha_i^{(m),\mathcal
{H}^{(m)}}\cdot\mathscr{C}_{+}^{(m)}(\mathcal
{E}_{\ell}^{(m)},z_i^{(m)})+ \sum\limits_{i\in
\bigcup\limits_{t=1}^4\mathcal
{I}_t^{(m)}}\beta_i^{(m),\mathcal {H}^{(m)}}\cdot\mathscr{C}_{-}^{(m)}(\mathcal {E}_{\ell}^{(m)},z_i^{(m)})\nonumber \\
&=&\sum\limits_{i\in \mathcal {I}_1^{(m)}}\alpha_i^{(m),\mathcal
{H}^{(m)}}\cdot\mathscr{C}_{+}^{(m)}(\mathcal
{E}_{\ell}^{(m)},z_i^{(m)})+ \sum\limits_{i\in \mathcal
{I}_2^{(m)}}\alpha_i^{(m),\mathcal {H}^{(m)}}\cdot\mathscr{C}_{+}^{(m)}(\mathcal {E}_{\ell}^{(m)},z_i^{(m)})\nonumber \\
& &+\sum\limits_{i\in \mathcal {I}_3^{(m)}}\alpha_i^{(m),\mathcal
{H}^{(m)}}\cdot\mathscr{C}_{+}^{(m)}(\mathcal
{E}_{\ell}^{(m)},z_i^{(m)})+ \sum\limits_{i\in \mathcal
{I}_4^{(m)}}\alpha_i^{(m),\mathcal {H}^{(m)}}\cdot\mathscr{C}_{+}^{(m)}(\mathcal {E}_{\ell}^{(m)},z_i^{(m)})\nonumber\\
& &+\sum\limits_{i\in \mathcal {I}_1^{(m)}}\beta_i^{(m),\mathcal
{H}^{(m)}}\cdot\mathscr{C}_{-}^{(m)}(\mathcal
{E}_{\ell}^{(m)},z_i^{(m)})+ \sum\limits_{i\in \mathcal
{I}_2^{(m)}}\beta_i^{(m),\mathcal {H}^{(m)}}\cdot\mathscr{C}_{-}^{(m)}(\mathcal {E}_{\ell}^{(m)},z_i^{(m)})\nonumber\\
& &+ \sum\limits_{i\in \mathcal {I}_3^{(m)}}\beta_i^{(m),\mathcal
{H}^{(m)}}\cdot\mathscr{C}_{-}^{(m)}(\mathcal
{E}_{\ell}^{(m)},z_i^{(m)})+ \sum\limits_{i\in \mathcal
{I}_4^{(m)}}\beta_i^{(m),\mathcal {H}^{(m)}}\cdot\mathscr{C}_{-}^{(m)}(\mathcal {E}_{\ell}^{(m)},z_i^{(m)})\nonumber\\
&=&\sum\limits_{i\in \mathcal
{I}_1^{(m)}}\mathscr{C}_{+}^{(m)}(\mathcal
{E}_{\ell}^{(m)},z_i^{(m)})+ 2\cdot\sum\limits_{i\in \mathcal
{I}_2^{(m)}}\mathscr{C}_{+}^{(m)}(\mathcal
{E}_{\ell}^{(m)},z_i^{(m)})\nonumber\\
& &+\ 2\cdot \sum\limits_{i\in \mathcal
{I}_3^{(m)}}\mathscr{C}_{+}^{(m)}(\mathcal
{E}_{\ell}^{(m)},z_i^{(m)})+\sum\limits_{i\in \mathcal
{I}_4^{(m)}}\mathscr{C}_{+}^{(m)}(\mathcal
{E}_{\ell}^{(m)},z_i^{(m)})\nonumber\\
& &+\sum\limits_{i\in \mathcal
{I}_1^{(m)}}\mathscr{C}_{-}^{(m)}(\mathcal
{E}_{\ell}^{(m)},z_i^{(m)})+ \sum\limits_{i\in
\mathcal{I}_4^{(m)}}\mathscr{C}_{-}^{(m)}(\mathcal {E}_{\ell}^{(m)},z_i^{(m)})\nonumber\\
&=&\sum\limits_{i\in \mathcal {I}_1^{(m-1)}\cup \mathcal
{I}_2^{(m-1)} }\mathscr{C}_{-}^{(m-1)}(\mathcal
{E}^{(m-1)},z_i^{(m-1)})+ 2\cdot\sum\limits_{i\in \mathcal
{I}_3^{(m-1)}\cup \mathcal
{I}_4^{(m-1)}}\mathscr{C}_{-}^{(m-1)}(\mathcal {E}^{(m-1)},z_i^{(m-1)})\nonumber\\
& &+\ 2\cdot \sum\limits_{i\in \mathcal {I}_5^{(m-1)}\cup \mathcal
{I}_6^{(m-1)}}\mathscr{C}_{-}^{(m-1)}(\mathcal
{E}^{(m-1)},z_i^{(m-1)}) + \sum\limits_{i\in \mathcal
{I}_7^{(m-1)}\cup \mathcal
{I}_8^{(m-1)}}\mathscr{C}_{-}^{(m-1)}(\mathcal {E}^{(m-1)},z_i^{(m-1)})\nonumber\\
& &+\sum\limits_{i\in \mathcal {I}_1^{(m-1)}\cup \mathcal
{I}_2^{(m-1)}}\mathscr{C}_{+}^{(m-1)}(\mathcal
{E}^{(m-1)},z_i^{(m-1)})+ \sum\limits_{i\in \mathcal
{I}_7^{(m-1)}\cup \mathcal
{I}_8^{(m-1)}}\mathscr{C}_{+}^{(m-1)}(\mathcal {E}^{(m-1)},z_i^{(m-1)})\nonumber\\
&=&2\cdot \big{(}\ \sum\limits_{i\in \mathcal {I}_1^{(m-1)}\cup
\mathcal {I}_2^{(m-1)} }\mathscr{C}_{-}^{(m-1)}(\mathcal
{E}^{(m-1)},z_i^{(m-1)})+ 2\cdot\sum\limits_{i\in \mathcal
{I}_3^{(m-1)}\cup \mathcal
{I}_4^{(m-1)}}\mathscr{C}_{-}^{(m-1)}(\mathcal {E}^{(m-1)},z_i^{(m-1)})\ \big{)}\nonumber\\
& &+\ 2\cdot \sum\limits_{i\in \mathcal {I}_1^{(m-1)}\cup \mathcal
{I}_2^{(m-1)}}\mathscr{C}_{+}^{(m-1)}(\mathcal {E}^{(m-1)},z_i^{(m-1)})\nonumber\\
&=&2\cdot \big{(}\ \sum\limits_{i\in \mathcal
{I}_1^{(m-1)}}\mathscr{C}_{-}^{(m-1)}(\mathcal
{E}^{(m-1)},z_i^{(m-1)})+ \sum\limits_{i\in \mathcal
{I}_2^{(m-1)}}\mathscr{C}_{-}^{(m-1)}(\mathcal {E}^{(m-1)},z_i^{(m-1)})\ \big{)}\nonumber\\
& &+\ 4\cdot\big{(}\ \sum\limits_{i\in \mathcal {I}_3^{(m-1)}}\mathscr{C}_{-}^{(m-1)}(\mathcal {E}^{(m-1)},z_i^{(m-1)})+\sum\limits_{i\in \mathcal {I}_4^{(m-1)}}\mathscr{C}_{-}^{(m-1)}(\mathcal {E}^{(m-1)},z_i^{(m-1)})\ \big{)}\nonumber\\
& &+\ 2\cdot \big{(}\ \sum\limits_{i\in \mathcal
{I}_1^{(m-1)}}\mathscr{C}_{+}^{(m-1)}(\mathcal
{E}^{(m-1)},z_i^{(m-1)})+\sum\limits_{i\in \mathcal
{I}_2^{(m-1)}}\mathscr{C}_{+}^{(m-1)}(\mathcal
{E}^{(m-1)},z_i^{(m-1)})\ \big{)}.
\end{eqnarray}
Then the lemma follows from Lemma \ref{Lemma three conclusions on C+
and C-} and trivial verifications.
\end{proof}

\begin{lemma}\label{Lemma crossing of E-black}
For $m\geq 3$,
$$\nu_{\Upsilon_{(m)}}(\mathcal {E}^{(m)})=\frac{194}{3}\cdot4^{m-1}-(4m^2+23m+\frac{101}{3}-\frac{7\cdot(1+(-1)^m)}{6})\cdot2^{m-1}.$$
\end{lemma}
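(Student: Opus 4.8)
The plan is to break $\mathcal{E}^{(m)}$ along the partition $\mathcal{E}^{(m)}=\mathcal{E}_{\ell}^{(m)}\cup\mathcal{E}_{r}^{(m)}\cup\mathcal{H}^{(m)}$ and collapse everything to a single recursion in $m$. Applying Lemma \ref{lemma counting of crossings} repeatedly to this partition gives
$$\nu_{\Upsilon_{(m)}}(\mathcal{E}^{(m)})=\nu_{\Upsilon_{(m)}}(\mathcal{E}_{\ell}^{(m)})+\nu_{\Upsilon_{(m)}}(\mathcal{E}_{r}^{(m)})+\nu_{\Upsilon_{(m)}}(\mathcal{H}^{(m)})+\nu_{\Upsilon_{(m)}}(\mathcal{E}_{\ell}^{(m)},\mathcal{E}_{r}^{(m)})+\nu_{\Upsilon_{(m)}}(\mathcal{H}^{(m)},\mathcal{E}_{\ell}^{(m)})+\nu_{\Upsilon_{(m)}}(\mathcal{H}^{(m)},\mathcal{E}_{r}^{(m)}).$$
I would then dispose of or identify each term. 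First, $\nu_{\Upsilon_{(m)}}(\mathcal{E}_{\ell}^{(m)},\mathcal{E}_{r}^{(m)})=0$: every edge of $\mathcal{E}_{\ell}^{(m)}$ is a semicircle confined to the $X$-interval spanned by $z_1^{(m)},\dots,z_{2^{m+1}}^{(m)}$ and every edge of $\mathcal{E}_{r}^{(m)}$ to that spanned by $z_{2^{m+1}+1}^{(m)},\dots,z_{2^{m+2}}^{(m)}$, and these two intervals are disjoint, so no left edge can cross a right edge.

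Next comes the heart of the reduction, the self-similarity $\nu_{\Upsilon_{(m)}}(\mathcal{E}_{\ell}^{(m)})=\nu_{\Upsilon_{(m-1)}}(\mathcal{E}^{(m-1)})$. By Lemma \ref{Lemma counting upsion}(1), equivalence of (i) and (iii), for indices in $[1,2^{m+1}]$ the edge $z_i^{(m)}z_j^{(m)}$ lies in $\mathcal{E}^{(m)}$ precisely when $z_i^{(m-1)}z_j^{(m-1)}$ lies in $\mathcal{E}^{(m-1)}$, so $\mathcal{E}_{\ell}^{(m)}$ is a faithful copy of $\mathcal{E}^{(m-1)}$ on the same index set; by part (2) each such edge has its side $\mathcal{O}$ reversed. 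Reversing every $\mathcal{O}$ is reflection of the whole picture across the $X$-axis, which preserves crossings, so the left block reproduces exactly the crossings of $\Upsilon_{(m-1)}$. Using the same two equivalences through the central reflection $i\mapsto 2^{m+2}+1-i$ of Lemma \ref{Lemma counting upsion}(1) (equivalence of (i) and (ii)) gives $\nu_{\Upsilon_{(m)}}(\mathcal{E}_{r}^{(m)})=\nu_{\Upsilon_{(m)}}(\mathcal{E}_{\ell}^{(m)})$ and $\nu_{\Upsilon_{(m)}}(\mathcal{H}^{(m)},\mathcal{E}_{r}^{(m)})=\nu_{\Upsilon_{(m)}}(\mathcal{H}^{(m)},\mathcal{E}_{\ell}^{(m)})$. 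Substituting these four facts into the decomposition collapses it to the recursion
$$\nu_{\Upsilon_{(m)}}(\mathcal{E}^{(m)})=2\,\nu_{\Upsilon_{(m-1)}}(\mathcal{E}^{(m-1)})+\nu_{\Upsilon_{(m)}}(\mathcal{H}^{(m)})+2\,\nu_{\Upsilon_{(m)}}(\mathcal{H}^{(m)},\mathcal{E}_{\ell}^{(m)}).$$

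I would then prove the closed form by induction on $m$. For the base case $m=3$ the value $480$ is read off from $\Upsilon_{(3)}$ in Figure 2.3 (equivalently, fed through the recursion from the directly computed $\nu_{\Upsilon_{(2)}}(\mathcal{E}^{(2)})=72$ together with $\nu_{\Upsilon_{(3)}}(\mathcal{H}^{(3)})=80$ and the special value $\nu_{\Upsilon_{(3)}}(\mathcal{H}^{(3)},\mathcal{E}_{\ell}^{(3)})=128$ from Lemma \ref{Lemma nu(H,El)}). For the inductive step $m\geq 4$ I would insert the general formulas of Lemma \ref{Lemma counting nu H(m)} and Lemma \ref{Lemma nu(H,El)} and the induction hypothesis into the recursion and check that the proposed closed form satisfies it: the coefficients of $4^{m-1}$ add up as $\tfrac{97}{3}+6+\tfrac{79}{3}=\tfrac{194}{3}$, while the coefficient of $2^{m-1}$ matches after using $(-1)^{m}=-(-1)^{m-1}$ to fold the two parity contributions into the single term $-\tfrac{7(1+(-1)^m)}{6}$.

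The step I expect to be delicate is the self-similarity identity $\nu_{\Upsilon_{(m)}}(\mathcal{E}_{\ell}^{(m)})=\nu_{\Upsilon_{(m-1)}}(\mathcal{E}^{(m-1)})$. Its content is that the induced left-block drawing is genuinely a reflected copy of $\Upsilon_{(m-1)}$, which requires that $\mathcal{E}_{\ell}^{(m)}$ contain not only the bridging images of the edges of $\mathcal{E}^{(m-1)}$ but also the ``twin'' edges $\pi(z)\widehat{\pi(z)}$ joining each pair produced by $\Omega$; it is exactly these twins that restore the full edge set and make the copy faithful, and Lemma \ref{Lemma counting upsion} is what certifies both the edge correspondence and the global reversal of $\mathcal{O}$. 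Once this identity is in hand, the rest is the bookkeeping of the six-term split and the parity-sensitive algebra at the end, both of which are routine.
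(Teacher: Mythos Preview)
Your proposal is correct and follows essentially the same route as the paper's proof: both set up the six-term decomposition via Lemma~\ref{lemma counting of crossings}, use Lemma~\ref{Lemma counting upsion} to obtain the left--right symmetry and the self-similarity $\nu_{\Upsilon_{(m)}}(\mathcal{E}_{\ell}^{(m)})=\nu_{\Upsilon_{(m-1)}}(\mathcal{E}^{(m-1)})$, collapse to the recursion $\nu_{\Upsilon_{(m)}}(\mathcal{E}^{(m)})=2\nu_{\Upsilon_{(m-1)}}(\mathcal{E}^{(m-1)})+2\nu_{\Upsilon_{(m)}}(\mathcal{H}^{(m)},\mathcal{E}_{\ell}^{(m)})+(6\cdot4^{m-1}-2^{m+1})$, and then induct using Lemmas~\ref{Lemma counting nu H(m)} and~\ref{Lemma nu(H,El)}. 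The only cosmetic differences are that you make the vanishing of $\nu_{\Upsilon_{(m)}}(\mathcal{E}_{\ell}^{(m)},\mathcal{E}_{r}^{(m)})$ explicit (the paper silently drops it) and you anchor the base case at $\nu_{\Upsilon_{(2)}}(\mathcal{E}^{(2)})=72$ rather than iterating from $\nu_{\Upsilon_{(1)}}(\mathcal{E}^{(1)})=4$.
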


\begin{proof}
By Lemma \ref{lemma counting of crossings}, Lemma \ref{Lemma
counting upsion} and Lemma \ref{Lemma counting nu H(m)}, we have
that for all $k\geq 1$,
\begin{eqnarray}\label{equation deductive for nu(E(i))}
\nu_{\Upsilon_{(k)}}(\mathcal {E}^{(k)})
&=&\nu_{\Upsilon_{(k)}}(\mathcal
{E}_{\ell}^{(k)})+\nu_{\Upsilon_{(k)}}(\mathcal {E}_{r}^{(k)})
+\nu_{\Upsilon_{(k)}}(\mathcal {H}^{(k)},\mathcal {E}_{\ell}^{(k)})
+\nu_{\Upsilon_{(k)}}(\mathcal {H}^{(k)},\mathcal {E}_{r}^{(k)})+\nu_{\Upsilon_{(k)}}(\mathcal {H}^{(k)})\nonumber\\
&=&2\cdot \big{(}\ \nu_{\Upsilon_{(k)}}(\mathcal
{E}_{\ell}^{(k)})+\nu_{\Upsilon_{(k)}}(\mathcal {H}^{(k)},\mathcal
{E}_{\ell}^{(k)})\ \big{)}
+\nu_{\Upsilon_{(k)}}(\mathcal {H}^{(k)})\nonumber\\
&=&2\cdot \big{(}\ \nu_{\Upsilon_{(k-1)}}(\mathcal
{E}^{(k-1)})+\nu_{\Upsilon_{(k)}}(\mathcal {H}^{(k)},\mathcal
{E}_{\ell}^{(k)})\ \big{)}
+\nu_{\Upsilon_{(k)}}(\mathcal {H}^{(k)})\nonumber\\
&=&2\cdot \nu_{\Upsilon_{(k-1)}}(\mathcal {E}^{(k-1)})+2\cdot
\nu_{\Upsilon_{(k)}}(\mathcal {H}^{(k)},\mathcal {E}_{\ell}^{(k)})
+(6\cdot 4^{k-1}-2^{k+1}).
\end{eqnarray}

Now we shall prove this lemma by induction on $m$. We first consider the case when
$m=3$. Observe
\begin{equation}\label{equation nu(E(1))=4}
\nu_{\Upsilon_{(1)}}(\mathcal {E}^{(1)})=4.
\end{equation}
By  \eqref{equation deductive for nu(E(i))}, \eqref{equation
nu(E(1))=4} and Lemma \ref{Lemma nu(H,El)},  we have that
$$\begin{array}{llll}\label{equation nu(E(3))}
\nu_{\Upsilon_{(3)}}(\mathcal {E}^{(3)})&=&2\cdot
\nu_{\Upsilon_{(2)}}(\mathcal {E}^{(2)})+2\cdot 128 +(6\cdot
4^{2}-2^{4})\\
&=&2\cdot \big{(}\ 2\cdot \nu_{\Upsilon_{(1)}}(\mathcal
{E}^{(1)})+2\cdot 24+6\cdot
4-2^3\ \big{)}+336\\
&=& 480\\
&=&\frac{194}{3}\cdot4^{3-1}-(4\cdot 3^2+23\cdot
3+\frac{101}{3}-\frac{7\cdot(1+(-1)^3)}{6})\cdot2^{3-1},
\end{array}$$
we are done. Hence, we need to consider the case when $m>3$. By \eqref{equation deductive
for nu(E(i))} and Lemma \ref{Lemma nu(H,El)},  we have that
$$\begin{array}{llll}
\nu_{\Upsilon_{(m)}}(\mathcal {E}^{(m)}) &=&2\cdot
\nu_{\Upsilon_{(m-1)}}(\mathcal {E}^{(m-1)})+2\cdot
\nu_{\Upsilon_{(m)}}(\mathcal {H}^{(m)},\mathcal {E}_{\ell}^{(m)})
+(6\cdot
4^{m-1}-2^{m+1})\\
&=&2\cdot \big{(}\
\frac{194}{3}\cdot4^{m-2}-(4(m-1)^2+23(m-1)+\frac{101}{3}-\frac{7\cdot(1+(-1)^{m-1})}{6})\cdot2^{m-2}\ \big{)}\\
& &+\ 2\cdot \big{(}\ \frac{158}{3}\cdot
4^{m-2}-(8m+\frac{38}{3}+\frac{7\cdot(1+(-1)^{m-1})}{3})\cdot
2^{m-2}\ \big{)} +(6\cdot
4^{m-1}-2^{m+1})\\
&=&\frac{194}{3}\cdot4^{m-1}-(4m^2+23m+\frac{101}{3}-\frac{7\cdot(1+(-1)^m)}{6})\cdot2^{m-1}.
\end{array}$$
This completes the proof of the lemma.
\end{proof}

\section{Proof of Theorem \ref{Theorem Upper Bound for general n}}

We begin this section by introducing a couple of necessary notations.\\
Let
$$\begin{array}{llll}
E_{black}^n&=&\bigcup\limits_{i=1}^{4}E(U_i^n\cup V_i^n),\\
E_{red}^n&=&E[U_1,U_2]\cup E[U_3,U_4]\cup E[V_1,V_2]\cup E[V_3,V_4],\\
E_{blue}^n&=&E[U_1,U_3]\cup E[U_2,U_4]\cup E[V_1,V_3]\cup E[V_2,V_4]\cup E[U_1,V_3]\cup E[U_2,V_4]\cup E[V_1,U_3]\cup E[V_2,U_4].\\
\end{array}$$
By Lemma \ref{Lemma enlarging the edges}, we have the following
decomposition \begin{equation} \label{equation decomposition of
edges of AQn} E(AQ_n)=E_{black}^n\cup E_{red}^n\cup E_{blue}^n
\end{equation}
with
\begin{equation}\label{equation Eb Er Eb}
\begin{array}{llll}
E_{black}^n&=\{e\in E(AQ_n): {\rm Dim}(e)\in [-(n-1),-2]\cup [3,n-1]\},\\
E_{red}^n&=\{e\in E(AQ_n): {\rm Dim}(e)\in \{1,2\}\},\\
E_{blue}^n&=\{e\in E(AQ_n): {\rm Dim}(e)\in \{-n,n\}\}.\\
\end{array}
\end{equation}

Let $\mathscr{R}\subseteq \mathbb{R}\times \mathbb{R}$ be a region,
and let $F$ be an edge subset of $E(AQ_n)$. We denote by ${\rm
In}_{\mathscr{R}}(F)$ the part of $F$ which is drawn at the inner of
the region $\mathscr{R}$, and by ${\rm Out}_{\mathscr{R}}(F)$ the
part of $F$ which is drawn outer the region $\mathscr{R}$. If
$F=\{e\}$ is a singleton, we simply write ${\rm
In}_{\mathscr{R}}(e)$ and ${\rm Out}_{\mathscr{R}}(e)$ for ${\rm
In}_{\mathscr{R}}(\{e\})$ and ${\rm Out}_{\mathscr{R}}(\{e\})$
respectively.

\noindent {\sl $\bullet$ For convenience, in the rest of this paper,
we shall write ${\rm In}(\cdot)$ to mean ${\rm
In}_{\mathbb{R}_{-5}^0\times \mathbb{R}_{5/2}^5}(\cdot)$ for short.}

%For compositional clearness, we shall draw the edges of
%$E_{black},E_{red}$ and $E_{blue}$ in {\it black}, {\it red} and
%{\it blue} respectively in later drawings $\Gamma_i$ for
%$i=4,5,6,7,8$.

Now we are in a position to prove Theorem \ref{Theorem Upper Bound
for general n}. For the convenience of readers, we split the proof
into two parts, which are put into Subsection 3.1 and Subsection
3.2, respectively. In Subsection 3.1, we give a good drawing
$\Gamma_n$ of $cr(AQ_n)$ for all $n\geq 8$. In Subsection 3.2, we
shall verify that the crossings of $\Gamma_n$ is no more than the
upper bound in Theorem \ref{Theorem Upper Bound for general n}, and
therefore completes the proof of Theorem \ref{Theorem Upper Bound
for general n}.

\subsection{The drawing $\Gamma_n$ for $n\geq 8$}

We first give the drawing of $E_{black}^n$ under $\Gamma_n$. In
general, $E_{black}^n$ are drawn at the inner of the region
$\mathbb{R}_{-5}^{5}\times \mathbb{R}_{-5}^{5}$. Let
$$\mathcal {W}^{n}\in
\{U_1^n,U_2^n,U_3^n,U_4^n,V_1^n,V_2^n,V_3^n,V_4^n\}.$$ Take
$\mathcal{N}=5$ and
\begin{equation}\label{equation value of z1-z4}
\begin{array}{llll} &
(z_1^{\mathcal{N}+(0)},z_2^{\mathcal{N}+(0)},z_3^{\mathcal{N}+(0)},z_4^{\mathcal{N}+(0)})=\left
\{\begin{array}{llll}
               (00100, 00011, 00111, 00000),  &  \mbox{ if } \ \ \mathcal {W}^{n}=U_1^n;\\
               (00001, 00110, 00010, 00101),  &  \mbox{ if } \ \ \mathcal {W}^{n}=U_2^n;\\
               (10100, 10011, 10111, 10000),  &  \mbox{ if } \ \ \mathcal {W}^{n}=U_3^n;\\
               (10101, 10010, 10110, 10001),  &  \mbox{ if } \ \ \mathcal {W}^{n}=U_4^n;\\
               (01111, 01000, 01100, 01011),  &  \mbox{ if } \ \ \mathcal {W}^{n}=V_1^n;\\
               (01010, 01101, 01001, 01110),  &  \mbox{ if } \ \ \mathcal {W}^{n}=V_2^n;\\
               (11111, 11000, 11100, 11011),  &  \mbox{ if } \ \ \mathcal {W}^{n}=V_3^n;\\
               (11110, 11001, 11101, 11010),  &  \mbox{ if } \ \ \mathcal {W}^{n}=V_4^n.\\
                             \end{array}
           \right. \\
\end{array}
\end{equation}
It is easy to verify that the above values of
$z_1^{\mathcal{N}+(0)},z_2^{\mathcal{N}+(0)},z_3^{\mathcal{N}+(0)},z_4^{\mathcal{N}+(0)}$
satisfy \eqref{equation Initial condition 1} and \eqref{equation
Initial condition 2} with $t_1=3=\mathcal{N}-2$ and
$t_2=-2=-(\mathcal{N}-3)$. Hence, we can draw $\langle\mathcal
{W}^{n}\rangle=\Omega^{(n-\mathcal{N})}(\{z_1^{\mathcal{N}+(0)},z_2^{\mathcal{N}+(0)},z_3^{\mathcal{N}+(0)},z_4^{\mathcal{N}+(0)}\})$
as the drawing $\Upsilon_{(n-5)}$ with the initial positive order as
$(z_1^{\mathcal{N}+(0)},z_2^{\mathcal{N}+(0)},z_3^{\mathcal{N}+(0)},z_4^{\mathcal{N}+(0)})$
and satisfying that
\begin{equation}\label{equation location X}
\begin{array}{llll} & X_{z_1^{(0)}}=X_{z_2^{(0)}}=X_{z_3^{(0)}}=X_{z_4^{(0)}}=\left \{\begin{array}{llll}
               -2,  &  \mbox{ if } \ \ \mathcal {W}^{n}\in \{U_1^n,U_3^n\};\\
               -1,  &  \mbox{ if } \ \ \mathcal {W}^{n}\in\{V_1^n,V_3^n\};\\
               1,  &  \mbox{ if } \ \ \mathcal {W}^{n}\in\{V_2^n,V_4^n\};\\
               2,  &  \mbox{ if } \ \ \mathcal {W}^{n}\in\{U_2^n,U_4^n\},\\
                             \end{array}
           \right. \\
\end{array}
\end{equation}
and
\begin{equation}\label{equation location Y}
\begin{array}{llll} & (Y_{z_1^{(0)}},Y_{z_2^{(0)}},Y_{z_3^{(0)}},Y_{z_4^{(0)}})=\left \{\begin{array}{llll}
               (1,2,3,4),  &  \mbox{ if } \ \ \mathcal {W}^{n}\in \{U_1^n,V_2^n\};\\
               (4,3,2,1),  &  \mbox{ if } \ \ \mathcal {W}^{n}\in \{V_1^n,U_2^n\};\\
               (-4,-3,-2,-1),  &  \mbox{ if } \ \ \mathcal {W}^{n}\in \{U_3^n,V_4^n\};\\
               (-1,-2,-3,-4),  &  \mbox{ if } \ \ \mathcal {W}^{n}\in \{V_3^n,U_4^n\}.\\
                             \end{array}
           \right. \\
\end{array}
\end{equation}
We remark that the edges $z_i^{(n-5)}z_j^{(n-5)}\in
\mathcal{E}^{(n-5)}$ will be drawn to be at the left or at the right
of the lines $x=\pm 1, \pm 2$ dependent on the locations of
$z_1^{\mathcal{N}+(0)},z_2^{\mathcal{N}+(0)},z_3^{\mathcal{N}+(0)},z_4^{\mathcal{N}+(0)}$
and $\mathcal {O}(z_i^{(n-5)}z_j^{(n-5)})$ in $\Upsilon_{(n-5)}$.

To proceed with this subsection, we need to rename the vertices of
$\mathcal {W}^{n}$ as follows:
$$\begin{array}{llll} & \mathcal
{W}^{n}=\left \{\begin{array}{llll}
               \{u_{1,1}^n,u_{1,2}^n,\ldots,u_{1,2^{n-3}}^n\},  &  \mbox{ if } \ \ \mathcal {W}^{n}=U_1^n;\\
               \{u_{2,1}^n,u_{2,2}^n,\ldots,u_{2,2^{n-3}}^n\},  &  \mbox{ if } \ \ \mathcal {W}^{n}=U_2^n;\\
               \{u_{3,1}^n,u_{3,2}^n,\ldots,u_{3,2^{n-3}}^n\},  &  \mbox{ if } \ \ \mathcal {W}^{n}=U_3^n;\\
               \{u_{4,1}^n,u_{4,2}^n,\ldots,u_{4,2^{n-3}}^n\},  &  \mbox{ if } \ \ \mathcal {W}^{n}=U_4^n;\\
               \{v_{1,1}^n,v_{1,2}^n,\ldots,v_{1,2^{n-3}}^n\},  &  \mbox{ if } \ \ \mathcal {W}^{n}=V_1^n;\\
               \{v_{2,1}^n,v_{2,2}^n,\ldots,v_{2,2^{n-3}}^n\},  &  \mbox{ if } \ \ \mathcal {W}^{n}=V_2^n;\\
               \{v_{3,1}^n,v_{3,2}^n,\ldots,v_{3,2^{n-3}}^n\},  &  \mbox{ if } \ \ \mathcal {W}^{n}=V_3^n;\\
               \{v_{4,1}^n,v_{4,2}^n,\ldots,v_{4,2^{n-3}}^n\},  &  \mbox{ if } \ \ \mathcal {W}^{n}=V_4^n;\\                    \end{array}
           \right. \\
\end{array}$$
with
\begin{equation}\label{equation order of Yu}
|Y_{u_{i,1}^n}|>|Y_{u_{i,2}^n}|>\cdots >|Y_{u_{i,2^{n-3}}^n}|
\end{equation}
 and
\begin{equation}\label{equation order of Yv}
|Y_{v_{i,1}^n}|>|Y_{v_{i,2}^n}|>\cdots >|Y_{v_{i,2^{n-3}}^n}|
\end{equation} where $i\in [1,4]$.

Note that
\begin{equation}\label{equation Yu=Yv}
|Y_{u_{1,j}^n}|=|Y_{v_{1,j}^n}|=|Y_{u_{2,j}^n}|=|Y_{v_{2,j}^n}|=|Y_{u_{3,j}^n}|=|Y_{v_{3,j}^n}|=|Y_{u_{4,j}^n}|=|Y_{v_{4,j}^n}|
\end{equation}
 for all $j\in
[1,2^{n-3}]$.

By \eqref{equation zi enlarging two} and \eqref{equation zi
enlarging pi}, we derive that for any $n\geq 8$,
\begin{equation}\label{equation orginial for uij}
u_{i,j}^n\in
\Omega^{(n-8)}(\{u_{i,\lceil\frac{j}{2^{n-8}}\rceil}^8\})
\end{equation}
and
\begin{equation}\label{equation orginial for vij}
v_{i,j}^n\in
\Omega^{(n-8)}(\{v_{i,\lceil\frac{j}{2^{n-8}}\rceil}^8\}),
\end{equation}
where $i\in [1,4]$ and $j\in [1,2^{n-3}]$.

\medskip

By applying Lemma \ref{Lemma enlarging the edges} repeatedly, we
have that
\begin{equation}\label{equation Dim -(n-1)}
\{e\in E(AQ_n):{\rm Dim}(e)=-(n-1)\}=\{u_{i,j}^n v_{i,j}^n: i\in
[1,4],j\in [1,2^{n-3}]\}
\end{equation}
and \begin{equation} \label{equation Dim (n-1)} \{e\in E(AQ_n):{\rm
Dim}(e)=(n-1)\}=\{u_{i,j}^n v_{i,j+(-1)^{j-1}}^n: i\in [1,4],j\in
[1,2^{n-3}]\}.
\end{equation}
By \eqref{equation order of Yu}, \eqref{equation order of Yv},
\eqref{equation Yu=Yv}, \eqref{equation Dim -(n-1)} and
\eqref{equation Dim (n-1)}, we can draw the corresponding edges in
$\{e\in E(AQ_n):{\rm Dim}(e)\in\{-(n-1),n-1\}\}$ to be straight
lines. Therefore, this completes the characterization of the drawing
of $E_{black}^n$.

\bigskip

By applying \eqref{equaiton gm dim -(N+m-2)}, we see that
\begin{eqnarray}\label{equation Dim -(n-2)}
\{e\in E(AQ_n): {\rm Dim}(e)=-(n-2)\}&=&\{u_{i,2j-1}^n u_{i,2j}^n:
i\in [1,4], j\in [1,2^{n-4}]\}\nonumber\\
& &\cup \ \{v_{i,2j-1}^n v_{i,2j}^n: i\in [1,4], j\in [1,2^{n-4}]\}.
\end{eqnarray}

\bigskip

Now we give the drawing of $E_{red}^n$ under $\Gamma_n$. In general,
$E_{red}^n$ are drawn at the inner of the region
$\mathbb{R}_{-5}^{5}\times \mathbb{R}_{-5}^{5}$.

By \eqref{equation zi enlarging two} and \eqref{equation zi
enlarging pi}, applying  Lemma \ref{Lemma enlarging the edges}
repeatedly, we have that
\begin{eqnarray} \label{equation Dim 1}
\{e\in E(AQ_n):{\rm Dim}(e)=1\}&=&\{u_{1,j}^n u_{2,j}^n: j\in
[1,2^{n-3}]\}\cup \{v_{1,j}^n v_{2,j}^n: j\in [1,2^{n-3}]\}\nonumber\\
& &\cup \ \{u_{3,j}^n u_{4,j}^n: j\in [1,2^{n-3}]\}\cup \{v_{3,j}^n
v_{4,j}^n: j\in [1,2^{n-3}]\}
\end{eqnarray}
and
\begin{eqnarray} \label{equation Dim 2}
\{e\in E(AQ_n):{\rm Dim}(e)=2\}&=&\{u_{1,j}^n u_{2,j+2^{n-4}}^n:
j\in [1,2^{n-3}]\}\cup \{v_{1,j}^n v_{2,j+2^{n-4}}^n: j\in
[1,2^{n-3}]\}\nonumber\\
& &\cup \ \{u_{3,j}^n u_{4,j+2^{n-4}}^n: j\in [1,2^{n-3}]\}\cup
\{v_{3,j}^n v_{4,j+2^{n-4}}^n: j\in [1,2^{n-3}]\},
\end{eqnarray}
where the subscripts $j+2^{n-4}$ are taken to be the least positive
residues modulo $2^{n-3}$.

In general, by \eqref{equation Eb Er Eb}, \eqref{equation order of
Yu}, \eqref{equation order of Yv}, \eqref{equation Yu=Yv},
\eqref{equation Dim 1} and \eqref{equation Dim 2}, we can draw
$E_{red}^n$ satisfying Conditions (i), (ii) and (iii).

(i)  The drawing of $E_{red}^n$ is symmetric with respect to
$Y$-axis;

(ii) The drawing of $E_{red}^n$ is symmetric with respect to
$X$-axis;

(iii) ${\rm In}_{\mathbb{R}_{-5}^5\times
\mathbb{R}_{0}^5}(E_{red}^n)$ is symmetric with respect to
$y=\frac{5}{2}$.

Take an arbitrary edge $e\in E_{red}^n$. By \eqref{equation Dim 1},
\eqref{equation Dim 2} and Conditions (i)-(iii), we may consider
only the case
$$e\in \bigcup\limits_ {t\in \{1,2\}}\mathscr{I}_t(U_{1,1}^n\cup
V_{1,1}^n).$$ Then we draw $e$ according to the following inductive
rule.

\textbf{Inductive rule for the drawing of $E_{red}^n$:} \ \ {\sl Let
$j$ be an arbitrary integer of $[1,2^{n-4}]$. By \eqref{equation
orginial for uij} and \eqref{equation orginial for vij}, we can draw
the arcs ${\rm In}(\mathscr{I}_1(u_{1,j}^n))$, ${\rm
In}(\mathscr{I}_1(v_{1,j}^n))$, ${\rm In}(\mathscr{I}_2(u_{1,j}^n))$
and ${\rm In}(\mathscr{I}_2(v_{1,j}^n))$ in $\Gamma_n$ to be along
the original ways of ${\rm
In}(\mathscr{I}_1(u_{1,{\lceil\frac{j}{2^{n-8}}\rceil}}^8))$, ${\rm
In}(\mathscr{I}_1(v_{1,{\lceil\frac{j}{2^{n-8}}\rceil}}^8))$, ${\rm
In}(\mathscr{I}_2(u_{1,{\lceil\frac{j}{2^{n-8}}\rceil}}^8))$ and
${\rm In}(\mathscr{I}_2(v_{1,{\lceil\frac{j}{2^{n-8}}\rceil}}^8))$
in $\Gamma_8$, respectively.}

Therefore, this completes the characterization of the drawing of
$E_{red}^n$.

\bigskip

Next, we give the drawing of $E_{blue}^n$ under $\Gamma_n$.

By \eqref{equation zi enlarging two} and \eqref{equation zi
enlarging pi}, applying  Lemma \ref{Lemma enlarging the edges}
repeatedly, we have that
\begin{eqnarray} \label{equation Dim -n}
\{e\in E(AQ_n):{\rm Dim}(e)=-n\}&=&\{u_{1,j}^n v_{3,j}^n: j\in
[1,2^{n-3}]\}\cup \{v_{1,j}^n u_{3,j}^n: j\in
[1,2^{n-3}]\}\nonumber\\
& &\cup \ \{u_{2,j}^n v_{4,j}^n: j\in [1,2^{n-3}]\}\cup \{v_{2,j}^n
u_{4,j}^n: j\in [1,2^{n-3}]\}
\end{eqnarray}
and
\begin{eqnarray} \label{equation Dim n}
\{e\in E(AQ_n):{\rm Dim}(e)=n\}&=&\{u_{1,j}^n u_{3,j}^n: j\in
[1,2^{n-3}]\}\cup \{v_{1,j}^n v_{3,j}^n: j\in
[1,2^{n-3}]\}\nonumber\\
& &\cup \ \{u_{2,j}^n u_{4,j}^n: j\in [1,2^{n-3}]\}\cup \{v_{2,j}^n
v_{4,j}^n: j\in [1,2^{n-3}]\}.
\end{eqnarray}

In general, by \eqref{equation Eb Er Eb}, \eqref{equation order of
Yu}, \eqref{equation order of Yv}, \eqref{equation Yu=Yv},
\eqref{equation Dim -n} and \eqref{equation Dim n}, we can draw
$E_{blue}^n$ satisfying Conditions (iv), (v) and (vi).

(iv) The drawing of $E_{blue}^n$ is symmetric with respect to
$Y$-axis;

(v) The drawing of $E_{blue}^n$ is symmetric with respect to
$X$-axis;

(vi) ${\rm In}_{\mathbb{R}_{-5}^5\times
\mathbb{R}_{0}^5}(E_{blue}^n)$ is symmetric with respect to
$y=\frac{5}{2}$.

Take an arbitrary edge $e\in E_{blue}^n$. By \eqref{equation Dim
-n}, \eqref{equation Dim n} and Condition (iv), we may assume $e\in
\bigcup\limits_ {t\in \{-n,n\}}\mathscr{I}_t(U_1^n\cup V_1^n).$
 We draw $e$ to be consisting of three nonempty parts, say
${\rm In}_{\mathbb{R}_{-5}^{5}\times \mathbb{R}_{0}^{5}}(e)$, ${\rm
In}_{\mathbb{R}_{-5}^{5}\times \mathbb{R}_{-5}^{0}}(e)$ and ${\rm
Out}_{\mathbb{R}_{-5}^{5}\times \mathbb{R}_{-5}^{5}}(e)$, of which
${\rm In}_{\mathbb{R}_{-5}^{5}\times \mathbb{R}_{0}^{5}}(e)$ and
${\rm In}_{\mathbb{R}_{-5}^{5}\times \mathbb{R}_{-5}^{0}}(e)$ `meet'
the line $x=-5$ at right angles, and of which ${\rm
Out}_{\mathbb{R}_{-5}^{5}\times \mathbb{R}_{-5}^{5}}(e)$ connects
the two `points' which are `produced' by ${\rm
In}_{\mathbb{R}_{-5}^{5}\times \mathbb{R}_{0}^{5}}(e)$ and  ${\rm
In}_{\mathbb{R}_{-5}^{5}\times \mathbb{R}_{-5}^{0}}(e)$ with the
line $x=-5$.

It remains to show the drawing of the other two parts of $e$, that
is, ${\rm In}_{\mathbb{R}_{-5}^{5}\times \mathbb{R}_{0}^{5}}(e)$ and
${\rm In}_{\mathbb{R}_{-5}^{5}\times \mathbb{R}_{-5}^{0}}(e)$. By
Conditions (v) and (vi), i.e., the above two kinds of symmetries of the drawing of $E_{blue}^n$, we can suppose without loss of generality that
$$e\in
\bigcup\limits_ {t\in \{-n,n\}}\mathscr{I}_t(U_{1,1}^n\cup
V_{1,1}^n),$$ and draw ${\rm In}(e)$ satisfying the following
inductive rule.

\textbf{Inductive rule for the drawing of $E_{blue}^n$:} \ \ {\sl
Let $j$ be an arbitrary integer of $[1,2^{n-4}]$. By \eqref{equation
orginial for uij} and \eqref{equation orginial for vij}, we can draw
the arc ${\rm In}(\mathscr{I}_n(v_{1,j}^n))$ in $\Gamma_n$ to be
along the original way of ${\rm
In}(\mathscr{I}_n(v_{1,{\lceil\frac{j}{2^{n-8}}\rceil}}^8))$ in
$\Gamma_8$, and we can draw the three arcs ${\rm
In}(\mathscr{I}_n(u_{1,j}^n))$, ${\rm
In}(\mathscr{I}_{-n}(u_{1,j}^n))$ and ${\rm
In}(\mathscr{I}_{-n}(v_{1,j}^n))$ to be {\rm a bunch} of three arcs
in $\Gamma_n$ which is along the line $y=Y_{u_{1,j}^n}$ and
satisfies Property A.}

\noindent \textbf{Property A:}\ \ The arc ${\rm
In}(\mathscr{I}_{-n}(u_{1,j}^n))$ will be drawn {\sl precisely} at
the line $y=Y_{u_{1,j}^n}$, one of ${\rm
In}(\mathscr{I}_n(u_{1,j}^n))$ and ${\rm
In}(\mathscr{I}_{-n}(v_{1,j}^n))$ lying flat above the line
$y=Y_{u_{1,j}^n}$ and another below the line $y=Y_{u_{1,j}^n}$. In
detailed, for $j\in [1,2^{n-6}-1]\cup
[2^{n-6}+1,2^{n-6}+2^{n-7}-1]\cup [2^{n-6}+2^{n-7}+1,2^{n-5}-1]\cup
[2^{n-5}+1,2^{n-5}+2^{n-7}-1]\cup
[2^{n-5}+2^{n-7}+1,2^{n-5}+2^{n-6}-1]\cup \{2^{n-5}+2^{n-6}+1\}$,
${\rm In}(\mathscr{I}_{-n}(v_{1,j}^n))$ is drawn above the line
$y=Y_{u_{1,j}^n}$, for otherwise, ${\rm
In}(\mathscr{I}_{-n}(v_{1,j}^n))$ is drawn below the line
$y=Y_{u_{1,j}^n}$.

Therefore, this completes the characterization of the drawing of
$E_{blue}^n$.

\bigskip

Combined \eqref{equation value of z1-z4}, \eqref{equation location
X}, \eqref{equation location Y} and Conditions (i)-(vi), we conclude
that $\Gamma_n$ has Property B.

\noindent \textbf{Property B:}\ \

$\bullet$ $\Gamma_n$ is symmetric with respect to $Y$-axis;

$\bullet$  $\Gamma_n$ is symmetric with respect to $X$-axis;

$\bullet$ ${\rm In}_{\mathbb{R}_{-5}^5\times
\mathbb{R}_{0}^5}(E(AQ_n))$ is symmetric with respect to
$y=\frac{5}{2}$.

\medskip

This completes the characterizations of the drawing $\Gamma_n$. To
make the above process clear, we give the drawings of ${\rm
In}(AQ_n)$ under $\Gamma_n$ for $n=8,9$ in Figure 3.1 and 3.2,
respectively.

\noindent $\bullet$ For the clearness of composition, in the rest of
this paper, any vertex $a=a_na_{n-1}\cdots a_1\in V(AQ_n)$ in
figures will be represented by the corresponding decimal number
$a_n\cdot 2^{n-1}+a_{n-1}\cdot 2^{n-2}+\cdots +a_1$.

\begin{figure}
\centering
\includegraphics[scale=1.0]{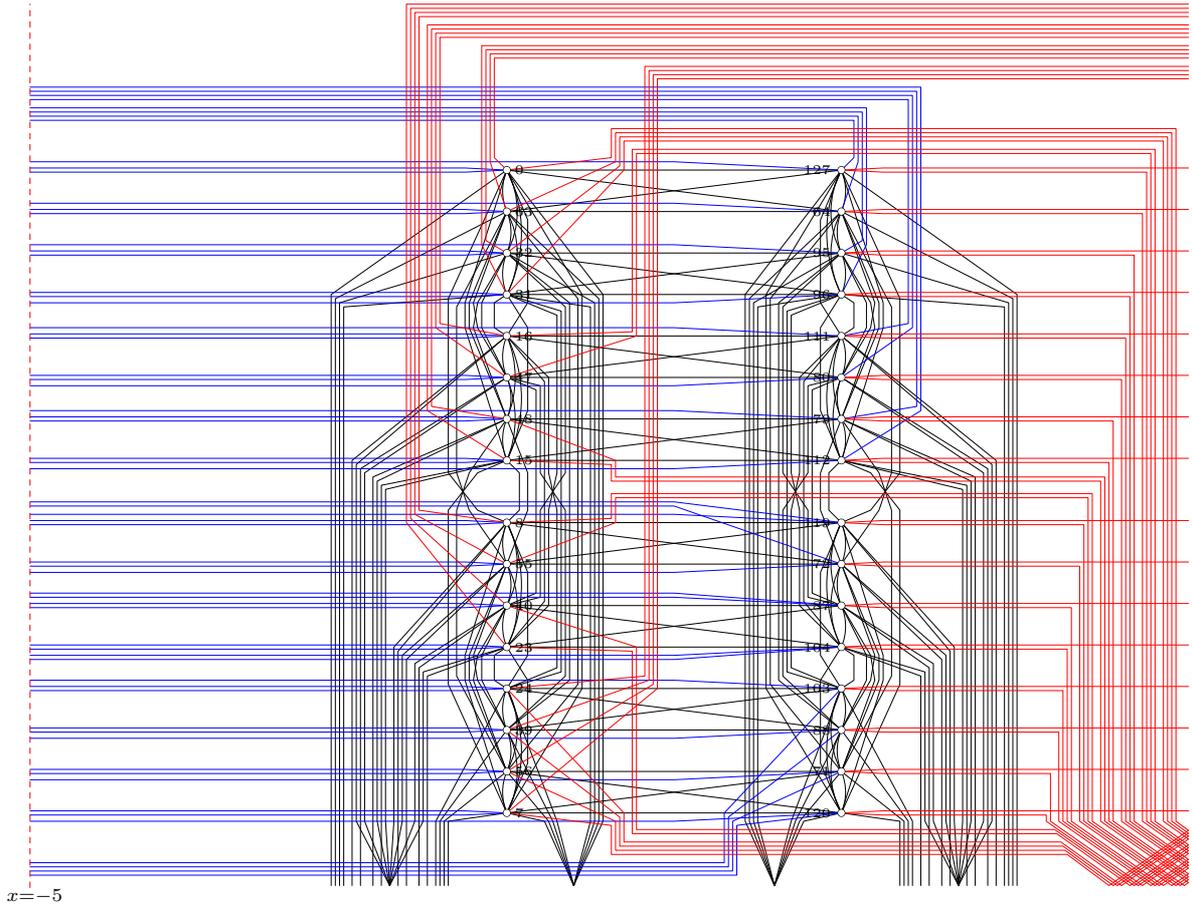}
\caption{\small{${\rm In}(AQ_8)$ in the drawing $\Gamma_8$}}
\end{figure}

\begin{figure}
\hspace{-10pt}
\includegraphics[scale=1.0]{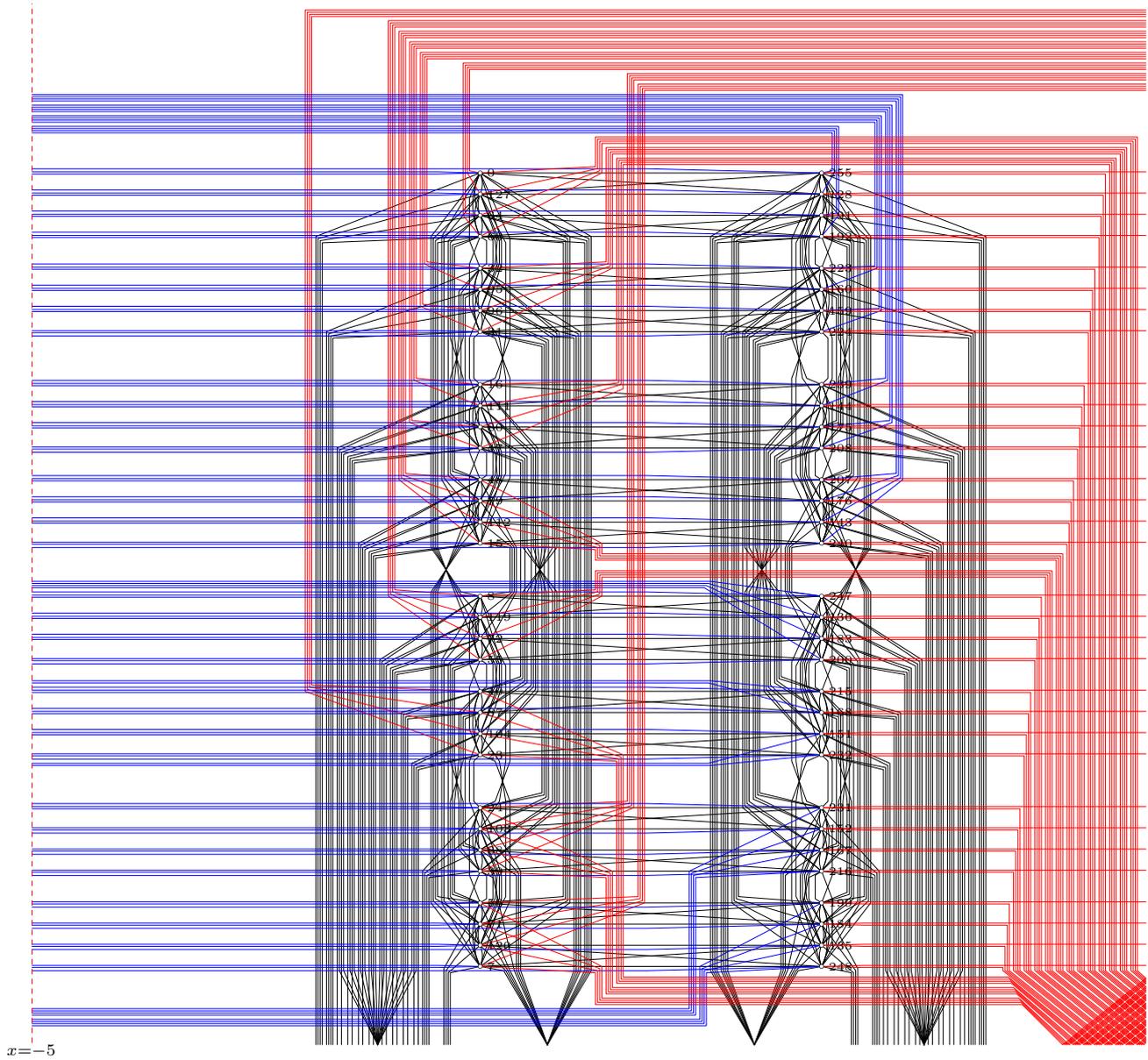}
\caption{\small{${\rm In}(AQ_9)$ in the drawing $\Gamma_9$}}
\end{figure}

\newpage

\subsection{The calculation of crossings in $\Gamma_n$}

In this subsection, we shall calculate the number of crossings of
$AQ_n$ in the drawing $\Gamma_n$ for $n\geq 8$. By \eqref{equation
decomposition of edges of AQn} and Lemma \ref{lemma counting of
crossings}, we have that
\begin{eqnarray} \label{equation crossings of AQn}
\nu_{\Gamma_n}(AQ_n)&=&\nu_{\Gamma_n}(E_{blue}^n)+\nu_{\Gamma_n}(E_{red}^n)+\nu_{\Gamma_n}(E_{black}^n) \nonumber\\
                    & &+\ \nu_{\Gamma_n}(E_{blue}^n, E_{red}^n)+\nu_{\Gamma_n}(E_{blue}^n, E_{black}^n)+\nu_{\Gamma_n}(E_{red}^n, E_{black}^n).
\end{eqnarray}
By \eqref{equation crossings of AQn}, the process of calculation can
be divided into six parts. To begin with the six parts, we  need
some preliminaries.

For $i\in [1,4]$, let
$$\begin{array}{rlll}
U_{i,1}^n&=&\{u_{i,1}^n,u_{i,2}^n,\ldots,u_{i,2^{n-4}}^n\},\\
U_{i,2}^n&=&\{u_{i,2^{n-4}+1}^n,u_{i,2^{n-4}+2}^n,\ldots,u_{i,2^{n-3}}^n\},\\
V_{i,1}^n&=&\{v_{i,1}^n,v_{i,2}^n,\ldots,v_{i,2^{n-4}}^n\},\\
V_{i,2}^n&=&\{v_{i,2^{n-4}+1}^n,v_{i,2^{n-4}+2}^n,\ldots,v_{i,2^{n-3}}^n\}.\\
\end{array}$$

For any edge $e\in \bigcup\limits_{t\in
\{-n,n\}}\mathscr{I}_t(V_{1,1}^n)$, let $Y^{e}$ be the
$Y$-coordinate of the `point' at which ${\rm In}(e)$ and the line
$x=-2$ cross each other, and let $\mathcal {L}({\rm In}(e))$ be the
least positive integer $j\in [1,2^{n-4}+1]$ such that
$Y^{e}>Y_{u_{1,j}^n}$.

Let $$H_{red}^n=\bigcup\limits_{t\in
[1,2]}\mathscr{I}_t(U_{1,1}^n)$$ and
$$H_{black}^n=\bigcup\limits_{t\in [-(n-1),-2]\cup
[3,n-1]}\mathscr{I}_t(U_{1,1}^n).$$

Take an edge $e\in H_{red}^n\cup H_{black}^n$. We say $e$ covers
vertex $u_{1,j}^n$ for some $j\in [1,2^{n-4}]$ provided that ${\rm
In}(e)$ crosses the line $y=Y_{u_{1,j}^n}$. Suppose $e$ is incident
to $u_{1,t}^n$ for some $t\in [1,2^{n-4}]$. Let
$\varepsilon(e,u_{1,t}^n)=1$ ($\varepsilon(e,u_{1,t}^n)=-1$) if $e$
shots upward (downward) of the line $y=Y_{u_{1,t}^n}$. For example,
$\varepsilon(\mathscr{I}_2(u_{1,j}^8),u_{1,j}^8)=1$ for $j\in [1,6]$
and $\varepsilon(\mathscr{I}_2(u_{1,k}^8),u_{1,k}^8)=-1$ for $k\in
[7,8]$ (see Figure 3.1).

Take an edge subset $H\subseteq H_{red}^n\cup H_{black}^n$. Then we
define
$$\varpi_j^{n,H}=|\{e\in H: e \mbox{ covers } u_{1,j}^n\}|,$$
$$\vartheta_j^{n,H}=|\{e\in H: e \mbox{ is incident to } u_{1,j}^n \mbox{ with } \varepsilon(e,u_{1,j}^n)=1\}|$$
and $$\varsigma_j^{n,H}=|\{e\in H: e \mbox{ is incident to }
u_{1,j}^n \mbox{ with } \varepsilon(e,u_{1,j}^n)=-1\}|.$$

Therefore, it is easy to derive the following

\noindent\textbf{Assertion C.} \ Let $e^{'}\in \bigcup\limits_{t\in
\{-n,n\}}\mathscr{I}_t(V_{1,1}^n)$, and let $H$ be an edge subset of
$H_{red}^n\cup H_{black}^n$. If ${\rm In}(e^{'})$ is drawn along the
line $y=Y_{u_{1,\mathcal {L}({\rm In}(e))}^n}$ then
\begin{equation}\label{equation along upward}
|\{e\in H: \nu_{\Gamma_n}\big{(} \ {\rm In}(e),\ {\rm In}(e^{'})\
\big{)}=1  \}|=\varpi_{\mathcal {L}({\rm
In}(e^{'}))}^{n,H}+\vartheta_{\mathcal {L}({\rm In}(e^{'}))}^{n,H},
\end{equation}

if ${\rm In}(e^{'})$ is drawn along the line $y=Y_{u_{1,\mathcal
{L}({\rm In}(e^{'}))-1}^n}$ then
\begin{equation}\label{equation along downward}
|\{e\in H: \nu_{\Gamma_n}\big{(} \ {\rm In}(e),\ {\rm In}(e^{'})\
\big{)}=1  \}|=\varpi_{\mathcal {L}({\rm
In}(e^{'}))-1}^{n,H}+\varsigma_{\mathcal {L}({\rm
In}(e^{'}))-1}^{n,H}.
\end{equation}

The following definitions will be key for the computations of
$\nu_{\Gamma_n}(E_{blue}^n, E_{red}^n)$ and
$\nu_{\Gamma_n}(E_{blue}^n, E_{black}^n)$ later.

Define
\begin{equation}\label{equation definition s i}
s_{n,j}=\varpi_j^{n,H_{red}^n}+\vartheta_j^{n,H_{red}^n},
\end{equation}
\begin{equation}\label{equation definition t i}
t_{n,j}=\varpi_j^{n,H_{black}^n}+\vartheta_j^{n,H_{black}^n}
\end{equation}
and
\begin{equation}\label{equation t' i}
t_{n,j}^{'}=\varpi_j^{n,E(U_1^n)}+\vartheta_j^{n,E(U_1^n)},
\end{equation}
where $j\in [1,2^{n-4}]$, and define
\begin{equation}\label{equation definition s 2exp(n-4)+1}
s_{n,2^{n-4}+1}=\varpi_{2^{n-4}}^{n,H_{red}^n}+\varsigma_{2^{n-4}}^{n,H_{red}^n},
\end{equation}
\begin{equation}\label{equation definition t 2exp(n-4)+1}
t_{n,2^{n-4}+1}=\varpi_{2^{n-4}}^{n,H_{black}^n}+\varsigma_{2^{n-4}}^{n,H_{black}^n}
\end{equation}
and
\begin{equation}\label{equation t' 2exp(n-4)+1}
t_{n,2^{n-4}+1}^{'}=\varpi_{2^{n-4}}^{n,E(U_1^n)}+\varsigma_{2^{n-4}}^{n,E(U_1^n)}.
\end{equation}

Since $H_{black}^n\setminus E(U_1^n)=\bigcup\limits_{t\in
\{-(n-1),n-1\}}\mathscr{I}_t(U_{1,1}^n)$, it follows from
\eqref{equation Dim -(n-1)} and \eqref{equation Dim (n-1)} that
\begin{equation}\label{equation t and t'}
\begin{array}{llll} & t_{n,j}=\left \{\begin{array}{llll}
               t_{n,j}^{'},  &  \mbox{ if } j\equiv 1 \pmod 2;\\
               t_{n,j}^{'}+1,  &  \mbox{ if } j\equiv 0 \pmod 2,\\
               \end{array}
           \right. \\
\end{array}
\end{equation}
and
\begin{equation}\label{equation varsigma odd and even}
\begin{array}{llll} & \varsigma_j^{n,H_{black}^n}=\left \{\begin{array}{llll}
               \varsigma_j^{n,E(U_1^n)}+1,  &  \mbox{ if } j\equiv 1 \pmod 2;\\
               \varsigma_j^{n,E(U_1^n)},  &  \mbox{ if } j\equiv 0 \pmod 2,\\
               \end{array}
           \right. \\
\end{array}
\end{equation}
for all $j\in [1,2^{n-4}+1]$.

\begin{lemma}\label{Lemma s and t} Let $n\geq 8$ be an integer. Then the following conclusions hold.

1. $s_{n,j+1}=\varpi_j^{n,H_{red}^n}+\varsigma_j^{n,H_{red}^n}$ \ \
for all $j\in [1,2^{n-4}]$;

2. $s_{n,2j-1}=2s_{n-1,j}$  \ \ for all $j\in [1,2^{n-5}+1]$;

3. $s_{n,2j}=s_{n-1,j}+s_{n-1,j+1}$ \ \ for all $j\in [1,2^{n-5}]$;

4. $t_{n,j+1}=\varpi_j^{n,H_{black}^n}+\varsigma_j^{n,H_{black}^n}$
\ \ for all $j\in [1,2^{n-4}-1]$;

5. $t_{n,2j-1}=2t_{n-1,j}$ \ \ for all $j\in [1,2^{n-5}+1]$;

6. $t_{n,2j}=t_{n-1,j}+t_{n-1,j+1}+2$ \ \ for all $j\in
[1,2^{n-5}]$.
\end{lemma}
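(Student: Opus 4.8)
The plan is to read all six quantities as counts of arcs meeting horizontal sweep lines placed between consecutive vertices $u_{1,j}^n$, and then to exploit the recursive doubling $u_{1,j}^{n-1}\mapsto\{u_{1,2j-1}^n,u_{1,2j}^n\}$ supplied by $\pi$ and $\widehat{\pi}$ together with the inductive drawing rules, which preserve the up/down orientation $\varepsilon(\cdot,\cdot)$.

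First I would set up the sweep-line picture. For an edge family $H$ (either $H_{red}^n$ or $H_{black}^n$) and a height $h$ avoiding every $Y_{u_{1,k}^n}$, let $N^H(h)$ be the number of arcs ${\rm In}(e)$, $e\in H$, met by the horizontal line at height $h$. Since the $u_{1,k}^n$ occupy distinct heights (ordered by \eqref{equation order of Yu}) and each arc is drawn monotonically across the strips it traverses, $N^H$ is constant on each open interval between two consecutive vertex heights. Evaluating just above $u_{1,j}^n$ gives $\vartheta_j^{n,H}+\varpi_j^{n,H}$ (arcs incident to $u_{1,j}^n$ shooting upward, plus arcs covering it), while just below $u_{1,j}^n$ gives $\varsigma_j^{n,H}+\varpi_j^{n,H}$. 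As the strip just below $u_{1,j}^n$ is the strip just above $u_{1,j+1}^n$, we get $\varpi_{j+1}^{n,H}+\vartheta_{j+1}^{n,H}=\varpi_j^{n,H}+\varsigma_j^{n,H}$. Taking $H=H_{red}^n$ and reading off the definitions of $s_{n,j+1}$ and $s_{n,2^{n-4}+1}$ yields Conclusion 1; taking $H=H_{black}^n$ gives Conclusion 4.

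For Conclusions 2, 3, 5 and 6 I would pass from level $n-1$ to level $n$. By \eqref{equation zi enlarging two}, \eqref{equation zi enlarging pi} and \eqref{equation orginial for uij} (with $n-1$ in place of $8$) the vertex $u_{1,j}^{n-1}$ splits into the consecutive pair $u_{1,2j-1}^n$ (upper) and $u_{1,2j}^n$ (lower), and by Lemma \ref{Lemma enlarging the edges} each dimension-$1$ or dimension-$2$ edge at level $n-1$ lifts, via $\pi$ and $\widehat{\pi}$, to exactly two such edges at level $n$, one incident to each member of the pair and both drawn — by the inductive rule for $E_{red}^n$, with orientation preserved — along the original arc. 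Writing $a_j,u_j,d_j$ for the numbers of red edges at level $n-1$ covering $u_{1,j}^{n-1}$, incident to it shooting up, and incident shooting down, conservation (Conclusion 1 at level $n-1$) gives $a_j+u_j=s_{n-1,j}$ and $a_j+d_j=s_{n-1,j+1}$. A covering edge lifts to two arcs covering the whole pair; an up-incident (resp. down-incident) edge lifts to two up-shooting (resp. down-shooting) arcs of which exactly one crosses the inner strip between $u_{1,2j-1}^n$ and $u_{1,2j}^n$. Hence the strip just above $u_{1,2j-1}^n$ is met by $2a_j+2u_j=2s_{n-1,j}$ arcs (Conclusion 2), and the inner strip just above $u_{1,2j}^n$ by $2a_j+u_j+d_j=s_{n-1,j}+s_{n-1,j+1}$ arcs (Conclusion 3). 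The absence of any correction term here is due to the intra-pair edge having dimension $-(n-2)$, hence being black rather than red.

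The black case runs identically for the lifted edges, producing $2A_j+U_j=2t_{n-1,j}$ above the pair and $2A_j+U_j+D_j=t_{n-1,j}+t_{n-1,j+1}$ across the inner strip, but now I must also count the black edges that are genuinely new at level $n$. For the odd index these new edges do not cross the strip above the pair, giving Conclusion 5 with no correction; for the even index exactly two new black arcs cross the inner strip — the intra-pair dimension-$(n-2)$ edge $u_{1,2j-1}^n u_{1,2j}^n$ of \eqref{equation Dim -(n-2)} together with the dimension-$(n-1)$ contribution recorded by \eqref{equation t and t'} and \eqref{equation varsigma odd and even} — which yields the correction term $2$ in Conclusion 6. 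The main obstacle is precisely this last count: using Lemma \ref{Lemma enlarging the edges}, the placement rules \eqref{equation location Y}, Property A and the inductive rule for $E_{black}^n$, one must verify that these are the only two new black edges meeting the inner strip and that they genuinely do so, while simultaneously keeping the $\pi/\widehat{\pi}$ parities correct so that the upper/lower roles of $u_{1,2j-1}^n,u_{1,2j}^n$ and the orientations $\varepsilon(\cdot,\cdot)$ are assigned consistently throughout the lifting. The reindexing identities 1 and 4 and the plain doubling for the red family are routine once the geometry is fixed; the delicate bookkeeping is entirely in tracking these new black edges and their orientations.
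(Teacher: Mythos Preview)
Your overall strategy---track $\varpi,\vartheta,\varsigma$ through the recursive doubling $u_{1,j}^{n-1}\mapsto\{u_{1,2j-1}^n,u_{1,2j}^n\}$---is the paper's strategy, and your treatment of Conclusions 2, 3, 5, 6 matches it in outline (the paper just does the parity-by-parity bookkeeping for the $+2$ in Conclusion~6 more explicitly, separating the dimension-$-(n-2)$ intra-pair edge from the dimension-$\pm(n-1)$ corrections in \eqref{equation t and t'} and \eqref{equation varsigma odd and even}).

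The substantive difference is how Conclusion~1 is established. Your sweep-line argument identifies $N^{H_{red}^n}(\text{just above }u_{1,j}^n)$ with $\varpi_j+\vartheta_j$ and $N^{H_{red}^n}(\text{just below})$ with $\varpi_j+\varsigma_j$. That identification is only valid if each arc ${\rm In}(e)$, $e\in H_{red}^n$, meets each horizontal line at most once: $\varpi_j$ counts \emph{edges} whose arc crosses $y=Y_{u_{1,j}^n}$, not intersection points, so if an arc wiggles the two readings of $N^H$ diverge. The Inductive rule for $E_{red}^n$ only says that the level-$n$ arcs are drawn ``along the original ways'' of the corresponding $\Gamma_8$ arcs, which are themselves nontrivial curves (cf.\ Figure~3.1), so this monotonicity is not automatic and you do not verify it. The paper sidesteps the issue entirely: it first derives four recursions---your Conclusions 2 and 3 together with the companion formulas $s_{n,2j-1}^d=s_{n-1,j}+s_{n-1,j}^d$ and $s_{n,2j}^d=2s_{n-1,j}^d$ for $s_{n,j}^d:=\varpi_j^{n,H_{red}^n}+\varsigma_j^{n,H_{red}^n}$---directly from the doubling, and then proves $s_{n,j+1}=s_{n,j}^d$ by induction on $n$, with the base $n=8$ checked by hand. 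In other words, the paper uses the recursions to deduce the conservation law rather than assuming it geometrically.

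For Conclusion~4 your sweep-line is on firmer ground, because $H_{black}^n=E(U_1^n)\cup(\text{dimension-}\pm(n-1)\text{ edges})$: the edges of $E(U_1^n)$ are semi-circles on the vertical line $x=-2$ and genuinely meet each horizontal line once (this is exactly the paper's \eqref{equation varpi+=varpi-}), and the dimension-$\pm(n-1)$ edges are straight segments. The paper makes precisely this decomposition and then applies the corrections \eqref{equation t and t'}, \eqref{equation varsigma odd and even}; your argument would need to do the same to be rigorous.
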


\begin{proof} To prove Conclusion 1, 2 and 3, we shall need the following
notations. Let
$$s_{n,j}^d=\varpi_j^{n,H_{red}^n}+\varsigma_j^{n,H_{red}^n}\mbox{ \ \ for }j\in [1,2^{n-4}].$$
We  have that for $n>8$ and for $j\in [1,2^{n-5}]$,
\begin{eqnarray}\label{equation s d odd}
s_{n,2j-1}^d&=&\varpi_{2j-1}^{n,H_{red}^n}+\varsigma_{2j-1}^{n,H_{red}^n} \nonumber\\
&=&(2\cdot
\varpi_{j}^{n-1,H_{red}^{n-1}}+\vartheta_{2j}^{n,H_{red}^{n}})+\varsigma_{2j-1}^{n,H_{red}^{n}} \nonumber\\
&=&(2\cdot
\varpi_{j}^{n-1,H_{red}^{n-1}}+\vartheta_{j}^{n-1,H_{red}^{n-1}})+\varsigma_{j}^{n-1,H_{red}^{n-1}} \nonumber\\
&=&(
\varpi_{j}^{n-1,H_{red}^{n-1}}+\vartheta_{j}^{n-1,H_{red}^{n-1}})+(
\varpi_{j}^{n-1,H_{red}^{n-1}}+\varsigma_{j}^{n-1,H_{red}^{n-1}}) \nonumber\\
&=&s_{n-1,j}+s_{n-1,j}^d,
%&=&s_{n-1,j}^u+s_{n-1,j+1}^u \nonumber\\
\end{eqnarray}
\begin{eqnarray}\label{equation s d even}
s_{n,2j}^d&=&\varpi_{2j}^{n,H_{red}^n}+\varsigma_{2j}^{n,H_{red}^n} \nonumber\\
&=&(2\cdot
\varpi_{j}^{n-1,H_{red}^{n-1}}+\varsigma_{2j-1}^{n,H_{red}^{n}})+\varsigma_{j}^{n-1,H_{red}^{n-1}} \nonumber\\
&=&(2\cdot
\varpi_{j}^{n-1,H_{red}^{n-1}}+\varsigma_{j}^{n-1,H_{red}^{n-1}})+\varsigma_{j}^{n-1,H_{red}^{n-1}} \nonumber\\
&=&2\cdot(\varpi_{j}^{n-1,H_{red}^{n-1}}+\varsigma_{j}^{n-1,H_{red}^{n-1}}) \nonumber\\
&=&2\cdot s_{n-1,j}^d,
\end{eqnarray}
\begin{eqnarray}\label{equation s odd}
s_{n,2j-1}&=&\varpi_{2j-1}^{n,H_{red}^n}+\vartheta_{2j-1}^{n,H_{red}^n} \nonumber\\
&=&(2\cdot
\varpi_{j}^{n-1,H_{red}^{n-1}}+\vartheta_{2j}^{n,H_{red}^{n}})+\vartheta_{j}^{n-1,H_{red}^{n-1}} \nonumber\\
&=&(2\cdot \varpi_{j}^{n-1,H_{red}^{n-1}}+\vartheta_{j}^{n-1,H_{red}^{n-1}})+\vartheta_{j}^{n-1,H_{red}^{n-1}} \nonumber\\
&=&2\cdot(\varpi_{j}^{n-1,H_{red}^{n-1}}+\vartheta_{j}^{n-1,H_{red}^{n-1}})\nonumber\\
&=&2\cdot s_{n-1,j}
\end{eqnarray}
and
\begin{eqnarray}\label{equation s even}
s_{n,2j}&=&\varpi_{2j}^{n,H_{red}^n}+\vartheta_{2j}^{n,H_{red}^n} \nonumber\\
&=&(2\cdot
\varpi_{j}^{n-1,H_{red}^{n-1}}+\varsigma_{2j-1}^{n,H_{red}^{n}})+\vartheta_{j}^{n-1,H_{red}^{n-1}} \nonumber\\
&=&(2\cdot
\varpi_{j}^{n-1,H_{red}^{n-1}}+\varsigma_{j}^{n-1,H_{red}^{n-1}})+\vartheta_{j}^{n-1,H_{red}^{n-1}} \nonumber\\
&=&(
\varpi_{j}^{n-1,H_{red}^{n-1}}+\vartheta_{j}^{n-1,H_{red}^{n-1}})+(
\varpi_{j}^{n-1,H_{red}^{n-1}}+\varsigma_{j}^{n-1,H_{red}^{n-1}}) \nonumber\\
&=&s_{n-1,j}+s_{n-1,j}^d.
\end{eqnarray}

Now we prove Conclusion 1 by induction on $n$. If $n=8$, it follows
from trivial verifications. Next we need only to consider the case when $n>8$. Conclusion 1 is equivalent
to show that $$s_{n,j+1}=s_{n,j}^d\mbox{ \ \ for all }j\in
[1,2^{n-4}-1].$$  By \eqref{equation s d odd}, \eqref{equation s d
even}, \eqref{equation s odd}, \eqref{equation s even} and the
induction hypothesis, we have that for all $j\in [1,2^{n-5}-1]$,
\begin{eqnarray}
s_{n,2j+1}&=&2\cdot s_{n-1,j+1} \nonumber\\
&=& 2\cdot s_{n-1,j}^d \nonumber\\
&=& s_{n,2j}^d\nonumber,
\end{eqnarray}
and that for all $j\in [1,2^{n-5}]$,
\begin{eqnarray}
s_{n,2j}&=&s_{n-1,j}+s_{n-1,j}^d \nonumber\\
&=&  s_{n,2j-1}^d \nonumber,
\end{eqnarray}
and Conclusion 1 follows immediately.

Furthermore, Conclusions 2 and 3 follow from \eqref{equation s odd},
\eqref{equation s even} and Conclusion 1.

Now we prove Conclusions 4, 5 and 6. By \eqref{equation location X},
we can infer that
\begin{equation}\label{equation varpi+=varpi-}
\varpi_{j+1}^{n,E(U_1^n)}+\vartheta_{j+1}^{n,E(U_1^n)}=\varpi_j^{n,E(U_1^n)}+\varsigma_j^{n,E(U_1^n)}\mbox{
\ \ for all} \ j\in [1,2^{n-4}].
\end{equation}
 It follows from \eqref{equation t and
t'}, \eqref{equation varsigma odd and even} and \eqref{equation
varpi+=varpi-} that for all $j\in [1,2^{n-4}-1]$,

if $j\equiv 1\pmod 2$ then
\begin{eqnarray}
t_{n,j+1}&=&t_{n,j+1}^{'}+1\nonumber\\
 &=&\varpi_{j+1}^{n,E(U_1^n)}+\vartheta_{j+1}^{n,E(U_1^n)}+1\nonumber\\
&=&\varpi_j^{n,E(U_1^n)}+\varsigma_j^{n,E(U_1^n)}+1\nonumber\\
&=&\varpi_j^{n,H_{black}^n}+\varsigma_j^{n,E(U_1^n)}+1\nonumber\\
&=&\varpi_j^{n,H_{black}^n}+\varsigma_j^{n,H_{black}^n}
\end{eqnarray}

if $j\equiv 0\pmod 2$ then
\begin{eqnarray}
t_{n,j+1}&=&t_{n,j+1}^{'}\nonumber\\
 &=&\varpi_{j+1}^{n,E(U_1^n)}+\vartheta_{j+1}^{n,E(U_1^n)}\nonumber\\
&=&\varpi_j^{n,E(U_1^n)}+\varsigma_j^{n,E(U_1^n)}\nonumber\\
&=&\varpi_j^{n,H_{black}^n}+\varsigma_j^{n,E(U_1^n)}\nonumber\\
&=&\varpi_j^{n,H_{black}^n}+\varsigma_j^{n,H_{black}^n},
\end{eqnarray}
and thus, Conclusion 4 follows.

It remains to show Conclusions 5 and 6. By \eqref{equation Dim
-(n-1)}, \eqref{equation Dim (n-1)}, \eqref{equation Dim -(n-2)},
\eqref{equation t and t'}, \eqref{equation varpi+=varpi-},
Conclusion 4 and Lemma \ref{Lemma enlarging the edges}, we conclude
that for all $j\in[1,2^{n-5}]$,

if $j\equiv 1\pmod 2$ then $$\begin{array}{llll}
t_{n,2j-1}&=&t_{n,2j-1}^{'}\\
&=& \varpi_{2j-1}^{n,E(U_1^n)}+\vartheta_{2j-1}^{n,E(U_1^n)}\\
&=& (2\cdot \varpi_{j}^{n-1,E(U_1^{n-1})}+\vartheta_{2j}^{n,E(U_1^n)}-1)+\vartheta_{j}^{n-1,E(U_1^{n-1})}\\
&=& (2\cdot \varpi_{j}^{n-1,E(U_1^{n-1})}+(\vartheta_{j}^{n-1,E(U_1^{n-1})}+1)-1)+\vartheta_{j}^{n-1,E(U_1^{n-1})}\\
&=& 2\cdot(\varpi_{j}^{n-1,E(U_1^{n-1})}+\vartheta_{j}^{n-1,E(U_1^{n-1})})\\
&=& 2\cdot t_{n,j}^{'}\\
&=& 2\cdot t_{n,j}\\
\end{array}$$
and
\begin{eqnarray}
t_{n,2j}&=&t_{n,2j}^{'}+1\nonumber\\
&=& \varpi_{2j}^{n,E(U_1^n)}+\vartheta_{2j}^{n,E(U_1^n)}+1 \nonumber\\
&=& (2\cdot \varpi_{j}^{n-1,E(U_1^{n-1})}+\varsigma_{2j-1}^{n,E(U_1^n)}-1)+(\vartheta_{j}^{n-1,E(U_1^{n-1})}+1)+1\nonumber\\
&=& (2\cdot \varpi_{j}^{n-1,E(U_1^{n-1})}+(\varsigma_{j}^{n-1,E(U_1^{n-1})}+2)-1)+(\vartheta_{j}^{n-1,E(U_1^{n-1})}+1)+1\nonumber\\
&=& (\varpi_{j}^{n-1,E(U_1^{n-1})}+\vartheta_{j}^{n-1,E(U_1^{n-1})})+(\varpi_{j}^{n-1,E(U_1^{n-1})}+\varsigma_{j}^{n-1,E(U_1^{n-1})})+3\nonumber\\
&=& (\varpi_{j}^{n-1,E(U_1^{n-1})}+\vartheta_{j}^{n-1,E(U_1^{n-1})})+(\varpi_{j+1}^{n-1,E(U_1^{n-1})}+\vartheta_{j+1}^{n-1,E(U_1^{n-1})})+3\nonumber\\
&=& t_{n,j}^{'}+(t_{n,j+1}^{'}+1)+2\nonumber\\
&=& t_{n,j}+t_{n,j+1}+2,
\end{eqnarray}

if $j\equiv 0\pmod 2$ then
\begin{eqnarray}
t_{n,2j-1}&=&t_{n,2j-1}^{'}\nonumber\\
&=& \varpi_{2j-1}^{n,E(U_1^n)}+\vartheta_{2j-1}^{n,E(U_1^n)} \nonumber\\
&=& (2\cdot \varpi_{j}^{n-1,E(U_1^{n-1})}+\vartheta_{2j}^{n,E(U_1^n)}-1)+(\vartheta_{j}^{n-1,E(U_1^{n-1})}+1)\nonumber\\
&=& (2\cdot \varpi_{j}^{n-1,E(U_1^{n-1})}+(\vartheta_{j}^{n-1,E(U_1^{n-1})}+2)-1)+(\vartheta_{j}^{n-1,E(U_1^{n-1})}+1)\nonumber\\
&=& 2\cdot(\varpi_{j}^{n-1,E(U_1^{n-1})}+\vartheta_{j}^{n-1,E(U_1^{n-1})})+2\nonumber\\
&=& 2\cdot t_{n,j}^{'}+2\nonumber\\
&=& 2\cdot (t_{n,j}^{'}+1) \nonumber\\
&=& 2\cdot t_{n,j}
\end{eqnarray}
and
\begin{eqnarray}
t_{n,2j}&=&t_{n,2j}^{'}+1\nonumber\\
&=& \varpi_{2j}^{n,E(U_1^n)}+\vartheta_{2j}^{n,E(U_1^n)}+1 \nonumber\\
&=& (2\cdot \varpi_{j}^{n-1,E(U_1^{n-1})}+\varsigma_{2j-1}^{n,E(U_1^n)}-1)+(\vartheta_{j}^{n-1,E(U_1^{n-1})}+2)+1\nonumber\\
&=& (2\cdot \varpi_{j}^{n-1,E(U_1^{n-1})}+(\varsigma_{j}^{n-1,E(U_1^{n-1})}+1)-1)+(\vartheta_{j}^{n-1,E(U_1^{n-1})}+2)+1\nonumber\\
&=& (\varpi_{j}^{n-1,E(U_1^{n-1})}+\vartheta_{j}^{n-1,E(U_1^{n-1})})+(\varpi_{j}^{n-1,E(U_1^{n-1})}+\varsigma_{j}^{n-1,E(U_1^{n-1})})+3\nonumber\\
&=& (\varpi_{j}^{n-1,E(U_1^{n-1})}+\vartheta_{j}^{n-1,E(U_1^{n-1})})+(\varpi_{j+1}^{n-1,E(U_1^{n-1})}+\vartheta_{j+1}^{n-1,E(U_1^{n-1})})+3\nonumber\\
&=& t_{n,j}^{'}+t_{n,j+1}^{'}+3\nonumber\\
&=& t_{n,j}^{'}+(t_{n,j+1}^{'}+1)+2\nonumber\\
&=& t_{n,j}+t_{n,j+1}+2,
\end{eqnarray}
and thus, Conclusions 5 and 6 follow immediately.
\end{proof}

\textbf{1. Calculation of $\nu_{\Gamma_n}(E_{blue}^n)$.}

By \eqref{equation Eb Er Eb}, \eqref{equation Dim -n},
\eqref{equation Dim n} and Property B, we have that
\begin{eqnarray}\label{equation counting Eblue}
& &\nu_{\Gamma_n}(E_{blue}^n) \nonumber\\
&=&\nu_{\Gamma_n}\big{(}\
{\rm Out}_{\mathbb{R}_{-5}^{5}\times
\mathbb{R}_{-5}^{5}}(E_{blue}^n)\ \big{)}+\nu_{\Gamma_n}\big{(}\
{\rm In}_{\mathbb{R}_{-5}^{5}\times
\mathbb{R}_{-5}^{5}}(E_{blue}^n)\ \big{)}\nonumber\\
&=&\nu_{\Gamma_n}\big{(}\ {\rm Out}_{\mathbb{R}_{-5}^{5}\times
\mathbb{R}_{-5}^{5}}(E_{blue}^n)\ \big{)}+ 8\cdot
\nu_{\Gamma_n}\big{(}\ {\rm In}(E_{blue}^n)\ \big{)}\nonumber\\
&=& 2\cdot \nu_{\Gamma_n}\big{(}\ {\rm
Out}_{\mathbb{R}_{-5}^{5}\times
\mathbb{R}_{-5}^{5}}(\bigcup\limits_{t\in
\{-n,n\}}\mathscr{I}_{t}(U_1^n\cup V_1^n))\ \big{)}+ 8\cdot
\nu_{\Gamma_n}\big{(}\ {\rm In}(\bigcup\limits_{t\in
\{-n,n\}}\mathscr{I}_{t}(U_{1,1}^n\cup V_{1,1}^n))\ \big{)}.
\end{eqnarray}

Now we calculate $\nu_{\Gamma_n}\big{(}\ {\rm
Out}_{\mathbb{R}_{-5}^{5}\times
\mathbb{R}_{-5}^{5}}(\bigcup\limits_{t\in
\{-n,n\}}\mathscr{I}_{t}(U_1^n\cup V_1^n))\ \big{)}$.

By \eqref{equation Dim -n}, \eqref{equation Dim n}, Inductive rule
for the drawing of $E_{blue}^n$ and Property B, we have that
\begin{equation}\label{equation Out=0 first}
\nu_{\Gamma_n}\big{(}\ {\rm Out}_{\mathbb{R}_{-5}^{5}\times
\mathbb{R}_{-5}^{5}}(\mathscr{I}_{n}(V_1^n))\ \big{)}=0,
\end{equation}
\begin{equation}\label{equation Out=0 second}
\nu_{\Gamma_n}\big{(}\ {\rm Out}_{\mathbb{R}_{-5}^{5}\times
\mathbb{R}_{-5}^{5}}(\mathscr{I}_{n}(V_1^n)),\ {\rm
Out}_{\mathbb{R}_{-5}^{5}\times
\mathbb{R}_{-5}^{5}}(\mathscr{I}_{-n}(V_1^n)\cup
\mathscr{I}_{-n}(U_1^n)\cup \mathscr{I}_{n}(U_1^n))\ \big{)}=0,
\end{equation}
and that for any two distinct integers $j,k\in [1,2^{n-3}]$,
\begin{equation}\label{equation Out=0 third}
\nu_{\Gamma_n}\big{(}\ {\rm Out}_{\mathbb{R}_{-5}^{5}\times
\mathbb{R}_{-5}^{5}}(\mathscr{I}_{-n}(v_{1,j}^n)\cup
\mathscr{I}_{-n}(u_{1,j}^n)\cup \mathscr{I}_{n}(u_{1,j}^n)),\ {\rm
Out}_{\mathbb{R}_{-5}^{5}\times
\mathbb{R}_{-5}^{5}}(\mathscr{I}_{-n}(v_{1,k}^n)\cup
\mathscr{I}_{-n}(u_{1,k}^n)\cup \mathscr{I}_{n}(u_{1,k}^n)) \
\big{)}=0,
\end{equation}
and furthermore that for every $j\in [1,2^{n-3}]$,
\begin{equation}\label{equation Out=1}
\nu_{\Gamma_n}\big{(}\ {\rm Out}_{\mathbb{R}_{-5}^{5}\times
\mathbb{R}_{-5}^{5}}(\mathscr{I}_{-n}(v_{1,j}^n)\cup
\mathscr{I}_{-n}(u_{1,j}^n)\cup \mathscr{I}_{n}(u_{1,j}^n))\
\big{)}=1
\end{equation}
which are shown in Figure 3.3 for example. In Figure 3.3,
$P_{u_{1,j}^n}^{\epsilon}$   ($P_{v_{1,j}^n}^{\epsilon}$) denotes
the `point' at which ${\rm In}_{\mathbb{R}_{-5}^{5}\times
\mathbb{R}_{0}^{5}}(\mathscr{I}_{\epsilon}(u_{1,j}^n))$ (${\rm
In}_{\mathbb{R}_{-5}^{5}\times
\mathbb{R}_{0}^{5}}(\mathscr{I}_{\epsilon}(v_{1,j}^n))$) and the
line $x=-5$ cross each other, and $P_{u_{3,j}^n}^{\epsilon}$
($P_{v_{3,j}^n}^{\epsilon}$) denotes the `point' at which ${\rm
In}_{\mathbb{R}_{-5}^{5}\times
\mathbb{R}_{-5}^{0}}(\mathscr{I}_{\epsilon}(u_{1,j}^n))$ \ (${\rm
In}_{\mathbb{R}_{-5}^{5}\times
\mathbb{R}_{-5}^{0}}(\mathscr{I}_{\epsilon}(v_{1,j}^n))$) and the
line $x=-5$ cross each other, where $\epsilon\in \{-n,n\}$.

\begin{figure}[ht]
\centering
\includegraphics[scale=1.0]{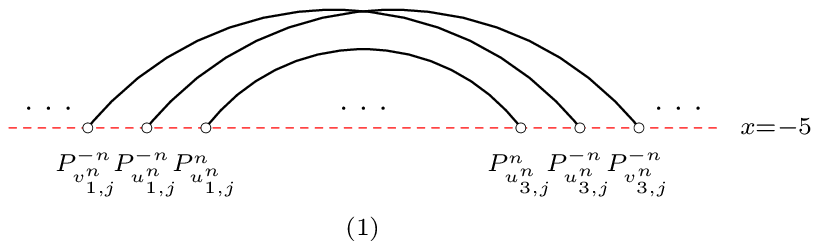}

\vspace{10pt}

\includegraphics[scale=1.0]{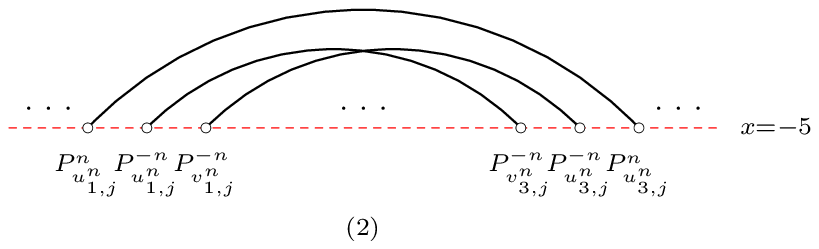}
\caption{\small{An auxiliary graph for the drawing of ${\rm
Out}_{\mathbb{R}_{-5}^{5}\times \mathbb{R}_{-5}^{5}}(E_{blue}^n)$}}
\end{figure}

Combined \eqref{equation Out=0 first}, \eqref{equation Out=0
second}, \eqref{equation Out=0 third} and \eqref{equation Out=1}, we
conclude that
\begin{equation}\label{equation Out(Elue)}
\nu_{\Gamma_n}\big{(}\ {\rm Out}_{\mathbb{R}_{-5}^{5}\times
\mathbb{R}_{-5}^{5}}(\bigcup\limits_{t\in
\{-n,n\}}\mathscr{I}_{t}(U_1^n\cup V_1^n))\ \big{)}=2^{n-3}.
\end{equation}

Next we calculate $\nu_{\Gamma_n}\big{(}\ {\rm
In}(\bigcup\limits_{t\in \{-n,n\}}\mathscr{I}_{t}(U_{1,1}^n\cup
V_{1,1}^n))\ \big{)}$.

By Inductive rule for the drawing of $E_{blue}^n$ and Observation
\ref{Observation two bunches cross}, we have that
\begin{equation}\label{equation In(Eblue) first}
\nu_{\Gamma_n}\big{(}\ {\rm In}(\bigcup\limits_{t\in
\{-n,n\}}\mathscr{I}_{t}(U_{1,1}^n))\ \big{)}=0,
\end{equation}
\begin{equation}\label{equation In(Eblue) second}
\nu_{\Gamma_n}\big{(}\ {\rm In}(\bigcup\limits_{t\in
\{-n,n\}}\mathscr{I}_{t}(U_{1,1}^n)),\ {\rm In}(\bigcup\limits_{t\in
\{-n,n\}}\mathscr{I}_{t}(V_{1,1}^n))\ \big{)}=0,
\end{equation}
 and that
\begin{equation}\label{equation In(Eblue) third}
\nu_{\Gamma_n}\big{(}\ {\rm In}(\bigcup\limits_{t\in
\{-n,n\}}\mathscr{I}_{t}(V_{1,1}^n))\ \big{)}={2^{n-6}\choose
2}+{2^{n-7}\choose 2}+2\cdot{2^{n-8}\choose 2}
\end{equation} where

${2^{n-6}\choose 2}$ crossings are produced between $\{{\rm
In}(\mathscr{I}_{n}(v_{1,j}^n)):j\in [2^{n-4}-2^{n-6}+1,2^{n-4}]\}$
and $\{{\rm In}(\mathscr{I}_{-n}(v_{1,j}^n)):j\in
[2^{n-4}-2^{n-6}+1,2^{n-4}]\}$,

${2^{n-7}\choose 2}$ crossings are produced between $\{{\rm
In}(\mathscr{I}_{n}(v_{1,j}^n)):j\in [2^{n-5}+1,2^{n-5}+2^{n-7}]\}$
and $\{{\rm In}(\mathscr{I}_{-n}(v_{1,j}^n)):j\in
[2^{n-5}+1,2^{n-5}+2^{n-7}]\}$,

$2\cdot{2^{n-8}\choose 2}$ crossings are produced between $\{{\rm
In}(\mathscr{I}_{n}(v_{1,j}^n)):j\in
[2^{n-5}+2^{n-7}+1,2^{n-5}+2^{n-6}]\}$ and $\{{\rm
In}(\mathscr{I}_{-n}(v_{1,j}^n)):j\in
[2^{n-5}+2^{n-7}+1,2^{n-5}+2^{n-6}]\}$.

Combined \eqref{equation In(Eblue) first}, \eqref{equation In(Eblue)
second} and  \eqref{equation In(Eblue) third}, we have
\begin{equation}\label{equation In(Elue)}
\nu_{\Gamma_n}\big{(}\ {\rm In}(\bigcup\limits_{t\in
\{-n,n\}}\mathscr{I}_{t}(U_{1,1}^n\cup V_{1,1}^n))\
\big{)}={2^{n-6}\choose 2}+{2^{n-7}\choose 2}+2\cdot{2^{n-8}\choose
2}.
\end{equation}

Therefore, it follows from \eqref{equation counting Eblue},
\eqref{equation Out(Elue)} and \eqref{equation In(Elue)} that
\begin{equation}\label{equation E-blue}
\nu_{\Gamma_n}(E_{blue}^n)=2\cdot 2^{n-3}+8\cdot\big{(}\
{2^{n-6}\choose 2}+{2^{n-7}\choose 2}+2\cdot{2^{n-8}\choose 2}\
\big{)}=11\cdot2^{2n-13}+2^{n-3}.
\end{equation}

\medskip

\textbf{2. Calculation of $\nu_{\Gamma_n}(E_{red}^n)$.}

By \eqref{equation Eb Er Eb}, \eqref{equation Dim -n},
\eqref{equation Dim n}, Inductive rule for the drawing of
$E_{red}^n$ and Property B, we have that
\begin{equation}\label{equation counting Ered}
\nu_{\Gamma_n}(E_{red}^n)=8\cdot\nu_{\Gamma_n}\big{(}\ {\rm
In}(\bigcup\limits_{t\in \{1,2\}}\mathscr{I}_{t}(U_{1,1}^n\cup
V_{1,1}^n))\ \big{)}+2\cdot \nu_{\Gamma_n}\big{(}\
\mathscr{I}_{2}(U_{1,1}^n\cup V_{1,1}^n), \
\mathscr{I}_{2}(U_{1,2}^n\cup V_{1,2}^n)\ \big{)}.
\end{equation}

Now we calculate $\nu_{\Gamma_n}\big{(}\ {\rm
In}(\bigcup\limits_{t\in \{1,2\}}\mathscr{I}_{t}(U_{1,1}^n\cup
V_{1,1}^n))\ \big{)}$.

By Inductive rule for the drawing of $E_{red}^n$ and Observation
\ref{Observation two bunches cross}, we have that
\begin{equation} \label{equation Ared-u}
\nu_{\Gamma_n}\big{(}\ {\rm In}(\bigcup\limits_{t\in
\{1,2\}}\mathscr{I}_{t}(U_{1,1}^n))\
\big{)}={2^{n-6}\choose2}+3\cdot2^{n-6}\cdot2^{n-6}
\end{equation}
where

${2^{n-6}\choose2}$ crossings are produced between $\{{\rm
In}(\mathscr{I}_1(u_{1,j}^n)):j\in [2^{n-4}-2^{n-6}+1,2^{n-4}]\}$
and $\{{\rm In}(\mathscr{I}_2(u_{1,j}^n)):j\in
[2^{n-4}-2^{n-6}+1,2^{n-4}]\}$,

$3\cdot2^{n-6}\cdot2^{n-6}$ crossings are produced between $\{{\rm
In}(\mathscr{I}_1(u_{1,j}^n)):j\in [2^{n-4}-2^{n-6}+1,2^{n-4}]\}$
and $\{{\rm In}(\mathscr{I}_2(u_{1,j}^n)):j\in
[1,2^{n-4}-2^{n-6}]\}$,

and that
\begin{equation} \label{equation Ared-v}
\nu_{\Gamma_n}\big{(}\ {\rm In}(\bigcup\limits_{t\in
\{1,2\}}\mathscr{I}_{t}(V_{1,1}^n))\ \big{)}={2^{n-4}\choose 2}
\end{equation}
where ${2^{n-4}\choose 2}$ crossings are produced between $\{{\rm
In}(\mathscr{I}_1(v_{1,j}^n)):j\in [1,2^{n-4}]\}$ and $\{{\rm
In}(\mathscr{I}_2(v_{1,j}^n)):j\in [1,2^{n-4}]\}$,

and that
\begin{equation} \label{equation Ared-u and Ared-v}
\nu_{\Gamma_n}\big{(}\ {\rm In}(\bigcup\limits_{t\in
\{1,2\}}\mathscr{I}_{t}(U_{1,1}^n)),\ {\rm In}(\bigcup\limits_{t\in
\{1,2\}}\mathscr{I}_{t}(V_{1,1}^n))\
\big{)}=3\cdot2^{n-7}\cdot2^{n-4}+2^{n-6}\cdot2^{n-5}
\end{equation}
where

$3\cdot2^{n-7}\cdot2^{n-4}$ crossings are produced between $\{{\rm
In}(\mathscr{I}_2(u_{1,j}^n)):j\in [1,2^{n-6}+2^{n-7}]\}$ and
$\{{\rm In}(\mathscr{I}_1(v_{1,j}^n)):j\in [1,2^{n-4}]\}$,

$2^{n-6}\cdot2^{n-5}$ crossings are produced between $\{{\rm
In}(\mathscr{I}_2(u_{1,j}^n)):j\in
[2^{n-6}+2^{n-7}+1,2^{n-5}+2^{n-7}]\}$ and $\{{\rm
In}(\mathscr{I}_1(v_{1,j}^n)):j\in [2^{n-5}+1,2^{n-4}]\}.$

Combined \eqref{equation Ared-u}, \eqref{equation Ared-v} and
\eqref{equation Ared-u and Ared-v}, we have
\begin{eqnarray}\label{equation counting Ered first}
& &\nu_{\Gamma_n}\big{(}\ {\rm In}(\bigcup\limits_{t\in
\{1,2\}}\mathscr{I}_{t}(U_{1,1}^n\cup
V_{1,1}^n))\ \big{)}\nonumber\\
&=& \big{(}\ {2^{n-6}\choose2}+3\cdot2^{n-6}\cdot2^{n-6}\
\big{)}+{2^{n-4}\choose
2}+\big{(}\ 3\cdot2^{n-7}\cdot2^{n-4}+2^{n-6}\cdot2^{n-5}\ \big{)}\nonumber\\
&=&39\cdot2^{2n-13}-5\cdot2^{n-7}.
\end{eqnarray}

On the other hand, it is easy to see that
\begin{equation}\label{equation counting Ered second}\nu_{\Gamma_n}\big{(}\ \mathscr{I}_{2}(U_{1,1}^n\cup V_{1,1}^n), \
\mathscr{I}_{2}(U_{1,2}^n\cup V_{1,2}^n)\ \big{)}=2^{n-3}\cdot
2^{n-3}=4^{n-3}. \end{equation}

Therefore, it follows from \eqref{equation counting Ered},
\eqref{equation counting Ered first} and \eqref{equation counting
Ered second} that
\begin{equation} \label{equation E-red}
\nu_{\Gamma_n}(E_{red}^n)=8\cdot \big{(}\
39\cdot2^{2n-13}-5\cdot2^{n-7}\ \big{)}+2\cdot
4^{n-3}=71\cdot2^{2n-10}-5\cdot2^{n-4}.
\end{equation}

\medskip

\textbf{3. Calculation of $\nu_{\Gamma_n}(E_{black}^n)$.}

By Property B and Lemma \ref{lemma counting of crossings}, we have
\begin{eqnarray}\label{equation Eblack first}
\nu_{\Gamma_n}(E_{black}^n)&=& 4\cdot \nu_{\Gamma_n}( E(U_1^n\cup V_1^n))\nonumber\\
&=&4\cdot \big{(}\
\nu_{\Gamma_n}(E(U_1^n))+\nu_{\Gamma_n}(E(V_1^n))+
\nu_{\Gamma_n}(E[U_1^n, \ V_1^n])\nonumber\\
& & \ \ \ \ \ + \ \nu_{\Gamma_n}(E[U_1^n,V_1^n], \
E(U_1^n))+\nu_{\Gamma_n}(E[U_1^n,V_1^n], \ E(V_1^n))\ \big{)}.
\end{eqnarray}

By  Lemma \ref{Lemma crossing of E-black}, we have
\begin{eqnarray}\label{equation Eblack second}
& &\nu_{\Gamma_n}(E(U_1^n))=\nu_{\Gamma_n}(E(V_1^n))\nonumber\\
&=&\nu_{\Upsilon_{(n-5)}}(\mathcal {E}^{(n-5)})\nonumber\\
&=&\frac{194}{3}\cdot4^{n-6}-(4\cdot (n-5)^2+23\cdot
(n-5)+\frac{101}{3}-\frac{7\cdot(1+(-1)^{n-5})}{6})\cdot2^{n-6}\nonumber\\
&=&\frac{97}{3}\cdot2^{2n-11}-(4n^2-17n+\frac{56}{3}-\frac{7\cdot(1+(-1)^{n-1})}{6})\cdot2^{n-6}.
\end{eqnarray}

By \eqref{equation order of Yu}, \eqref{equation order of Yv},
\eqref{equation Yu=Yv}, \eqref{equation Dim -(n-1)}, \eqref{equation
Dim (n-1)} and Conclusion 4 in Lemma \ref{Lemma enlarging the
edges}, we have
\begin{equation}\label{equation Eblack third}
\nu_{\Gamma_n}(E[U_1^n,V_1^n])=2^{n-4},
\end{equation}
and by \eqref{equation location Y} and Lemma \ref{Lemma C+ and C-},
we have
\begin{eqnarray}\label{equation Eblack fourth}
& &\nu_{\Gamma_n}(E[U_1^n,V_1^n], \
E(U_1^n))=\nu_{\Gamma_n}(E[U_1^n,V_1^n], \ E(V_1^n))\nonumber\\
&=&2\cdot \mathcal {C}_{-}^{(n-5)}\nonumber\\
&=&2\cdot\big{(}\ \frac{5}{3}\cdot
4^{n-4}-(2\cdot(n-5)+\frac{13}{3}+\frac{7\cdot
(1+(-1)^{n-5})}{6})\cdot 2^{n-5}\ \big{)}\nonumber\\
&=&\frac{5}{3}\cdot 2^{2n-7}-(2n-\frac{17}{3}+\frac{7\cdot
(1+(-1)^{n-1})}{6})\cdot 2^{n-4}.
\end{eqnarray}

Therefore, it follows from \eqref{equation Eblack first},
\eqref{equation Eblack second}, \eqref{equation Eblack third} and
\eqref{equation Eblack fourth} that
\begin{eqnarray}\label{equation general E-black}
\nu_{\Gamma_n}(E_{black}^n)
&=&4\cdot\big{(}\ \frac{97}{3}\cdot2^{2n-10}-(4n^2-17n+\frac{56}{3}-\frac{7\cdot(1+(-1)^{n-1})}{6})\cdot2^{n-5}+2^{n-4}\nonumber\\
& &\ \ \ \ \ +\ \frac{5}{3}\cdot2^{2n-6}-(2n-\frac{17}{3}+\frac{7\cdot (1+(-1)^{n-1})}{6})\cdot2^{n-3}\ \big{)}\nonumber\\
&=&59\cdot 2^{2n-8}-\big{(}4n^2-9n-6\big{)}\cdot
2^{n-3}-7\cdot(1+(-1)^{n-1})\cdot2^{n-4}.
\end{eqnarray}

\medskip

\textbf{4. Calculation of $\nu_{\Gamma_n}(E_{red}^n,E_{black}^n)$.}

By  Property B and Lemma \ref{lemma counting of crossings}, we have
that
\begin{eqnarray} \label{equation E-red and E-black general}
& &\nu_{\Gamma_n}(E_{red}^n,E_{black}^n) \nonumber\\
&=&8\cdot \nu_{\Gamma_n}\big{(}\ {\rm In}(E_{red}^n),\ {\rm In}(E_{black}^n) \ \big{)}\nonumber\\
&=&8\cdot \nu_{\Gamma_n}\big{(}\ {\rm In}(\bigcup\limits_{t\in
\{1,2\}}\mathscr{I}_t(U_{1,1}^n\cup V_{1,1}^n)), \ {\rm In}(E(U_1^n\cup V_1^n))\ \big{)} \nonumber\\
&=&8\cdot\big{(}\ \nu_{\Gamma_n}({\rm In}(\bigcup\limits_{t\in
\{1,2\}}\mathscr{I}_t(U_{1,1}^n)),\ {\rm
In}(E(U_1^n)))+\nu_{\Gamma_n}({\rm In}(\bigcup\limits_{t\in
\{1,2\}}\mathscr{I}_t(U_{1,1}^n)),\ E[U_{1,1}^n,V_{1,1}^n])\nonumber\\
& & \ \ \ \ \ +\ \nu_{\Gamma_n}({\rm In}(\bigcup\limits_{t\in
\{1,2\}}\mathscr{I}_t(U_{1,1}^n)),\ {\rm In}(E(V_1^n)))
+\nu_{\Gamma_n}({\rm In}(\bigcup\limits_{t\in
\{1,2\}}\mathscr{I}_t(V_{1,1}^n)),\ {\rm In}(E(U_1^n)))\nonumber\\
& &\ \ \ \ \ +\ \nu_{\Gamma_n}({\rm In}(\bigcup\limits_{t\in
\{1,2\}}\mathscr{I}_t(V_{1,1}^n)),\
E[U_{1,1}^n,V_{1,1}^n])+\nu_{\Gamma_n}({\rm In}(\bigcup\limits_{t\in
\{1,2\}}\mathscr{I}_t(V_{1,1}^n)),\ {\rm In}(E(V_1^n))) \ \big{)}
\end{eqnarray}

By Observation \ref{Observation two bunches cross}, Lemma \ref{Lemma
C+ and C-}, Lemma \ref{Lemma three conclusions on C+ and C-} and
Inductive rule for the drawing of $E_{red}^n$, we have that
\begin{eqnarray} \label{equation Ared-u and blackl}
& &\nu_{\Gamma_n}\big{(}\ {\rm In}(\bigcup\limits_{t\in
\{1,2\}}\mathscr{I}_t(U_{1,1}^n)),\
{\rm In}(E(U_1^n))\ \big{)}\nonumber\\
&=&\sum\limits_{j\in \mathcal{I}_6^{(n-5)}\cup\mathcal{I}_7^{(n-5)}\cup\mathcal{I}_8^{(n-5)}}\big{(}\ \mathscr{C}_{+}^{(n-5)}(\mathcal {E}^{(n-5)},z_j^{(n-5)})+\mathscr{C}_{-}^{(n-5)}(\mathcal {E}^{(n-5)},z_j^{(n-5)})\ \big{)}\nonumber\\
& &+\ 2\sum\limits_{j\in \mathcal{I}_5^{(n-5)}}\mathscr{C}_{-}^{(n-5)}(\mathcal{E}^{(n-5)},z_j^{(n-5)})\nonumber\\
&=&\frac{335}{3}\cdot4^{n-7}-(16n-\frac{100}{3}+\frac{7\cdot(1+(-1)^{n-1})}{3})\cdot2^{n-8},
\end{eqnarray}

and that
\begin{equation} \label{equation Ared-u and blacklr}
\nu_{\Gamma_n}\big{(}\ {\rm In}(\bigcup\limits_{t\in
\{1,2\}}\mathscr{I}_t(U_{1,1}^n)),\ E[U_{1,1}^n,V_{1,1}^n]\
\big{)}=2\cdot2\cdot{3\cdot2^{n-7}\choose2}+2\cdot2\cdot{2^{n-7}\choose2}+2\cdot{2^{n-6}\choose2}+2\cdot3\cdot2^{n-6}\cdot2^{n-6}
\end{equation}
where

$2\cdot2\cdot{3\cdot2^{n-7}\choose2}$ crossings are produced between
$\{{\rm In}(\mathscr{I}_2(u_{1,j}^n)):j\in [1,2^{n-6}+2^{n-7}]\cup
[2^{n-5}+2^{n-7}+1,2^{n-4}]\}$ and $E[U_{1,1}^n,V_{1,1}^n]$;

$2\cdot2\cdot{2^{n-7}\choose2}$ crossings are produced between
$\{{\rm In}(\mathscr{I}_2(u_{1,j}^n)):j\in
[2^{n-6}+2^{n-7}+1,2^{n-5}+2^{n-7}]\}$ and $E[U_{1,1}^n,V_{1,1}^n]$;

$2\cdot{2^{n-6}\choose2}+2\cdot3\cdot2^{n-6}\cdot2^{n-6}$ crossings
are produced between $\{{\rm In}(\mathscr{I}_1(u_{1,j}^n)):j\in
[2^{n-5}+2^{n-6}+1,2^{n-4}]\}$ and $E[U_{1,1}^n,V_{1,1}^n]$,

and that
\begin{equation} \label{equation Ared-u and blackr}
\nu_{\Gamma_n}\big{(}\ {\rm In}(\bigcup\limits_{t\in
\{1,2\}}\mathscr{I}_t(U_{1,1}^n)),\ {\rm In}(E(V_1^n))\
\big{)}=4\cdot2^{n-5}\cdot2^{n-6}+2\cdot2^{n-4}\cdot3\cdot2^{n-7}
\end{equation}
where

$4\cdot2^{n-5}\cdot2^{n-6}$ crossings are produced between $\{{\rm
In}(\mathscr{I}_2(u_{1,j}^n)):j\in
[2^{n-6}+2^{n-7}+1,2^{n-5}+2^{n-7}]\}$ and ${\rm In}(E(V_1^n))$;

$2\cdot2^{n-4}\cdot3\cdot2^{n-7}$ crossings are produced between
$\{{\rm In}(\mathscr{I}_2(u_{1,j}^n)):j\in
[2^{n-5}+2^{n-7}+1,2^{n-4}]\}$ and ${\rm In}(E(V_1^n))$,

and that
\begin{equation} \label{equation Ared-v and blackr}
\nu_{\Gamma_n}\big{(}\ {\rm In}(\bigcup\limits_{t\in
\{1,2\}}\mathscr{I}_t(V_{1,1}^n)),\ {\rm In}(E(V_1^n))\
\big{)}=2\cdot \frac{1}{2}\cdot\mathcal
{C}_{+}^{(n-5)}=\frac{7}{3}\cdot4^{n-4}-(2n-\frac{14}{3}+\frac{7(1+(-1)^n)}{6})\cdot2^{n-5},
\end{equation}

and that
\begin{equation} \label{equation Ared-v and black1,blacklr first}
\nu_{\Gamma_n}\big{(}\ {\rm In}(\bigcup\limits_{t\in
\{1,2\}}\mathscr{I}_t(V_{1,1}^n)),\ {\rm In}(E(U_1^n))\ \big{)}=0,
\end{equation}

and that
\begin{equation} \label{equation Ared-v and black1,blacklr}
\nu_{\Gamma_n}\big{(}\ {\rm In}(\bigcup\limits_{t\in
\{1,2\}}\mathscr{I}_t(V_{1,1}^n)),\ E[U_{1,1}^n,V_{1,1}^n]\
\big{)}=0.
\end{equation}

Therefore, it follows from \eqref{equation E-red and E-black
general}, \eqref{equation Ared-u and blackl}, \eqref{equation Ared-u
and blacklr}, \eqref{equation Ared-u and blackr}, \eqref{equation
Ared-v and blackr}, \eqref{equation Ared-v and black1,blacklr first}
and \eqref{equation Ared-v and black1,blacklr} that
\begin{eqnarray} \label{equation E-red and E-black}
\nu_{\Gamma_n}(E_{red}^n,E_{black}^n)
&=&8\cdot\big{(}
 389\cdot2^{2n-14}-(n-1)\cdot2^{n-3}+7\cdot(1+(-1)^{n-1})\cdot2^{n-8}\ \big{)}\nonumber\\
&=&389\cdot2^{2n-11}-(n-1)\cdot2^{n}+7\cdot(1+(-1)^{n-1})\cdot2^{n-5}.
\end{eqnarray}

\medskip

\textbf{5. Calculation of $\nu_{\Gamma_n}(E_{blue}^n,E_{red}^n)$.}

By \eqref{equation Eb Er Eb}, Property B and Lemma \ref{lemma
counting of crossings}, we have that
\begin{eqnarray} \label{equation E-blue and E-red general}
& &\nu_{\Gamma_n}(E_{blue}^n,E_{red}^n)  \nonumber\\
&=& 8\cdot \nu_{\Gamma_n}\big{(}\ {\rm In}(E_{blue}^n),{\rm
In}(E_{red}^n)\ \big{)}\nonumber\\
&=& 8\cdot \nu_{\Gamma_n}\big{(}\ {\rm In}(\bigcup\limits_{t\in
\{-n,n\}}\mathscr{I}_t(U_{1,1}^n\cup V_{1,1}^n)), \ {\rm
In}(\bigcup\limits_{t\in
\{1,2\}}\mathscr{I}_t(U_{1,1}^n\cup V_{1,1}^n))\ \big{)}\nonumber\\
&=& 8\cdot \nu_{\Gamma_n}\big{(}\ {\rm In}(\bigcup\limits_{t\in
\{-n,n\}}\mathscr{I}_t(U_{1,1}^n))\cup {\rm In}(\bigcup\limits_{t\in
\{-n,n\}}\mathscr{I}_t(V_{1,1}^n)), \ {\rm In}(\bigcup\limits_{t\in
\{1,2\}}\mathscr{I}_t(U_{1,1}^n))\cup {\rm In}(\bigcup\limits_{t\in
\{1,2\}}\mathscr{I}_t(V_{1,1}^n))\ \big{)}\nonumber\\
&=&8\cdot\big{(} \ \nu_{\Gamma_n}( {\rm In}(\bigcup\limits_{t\in
\{-n,n\}}\mathscr{I}_t(U_{1,1}^n)),\ {\rm In}(\bigcup\limits_{t\in
\{1,2\}}\mathscr{I}_t(U_{1,1}^n)) )+\nu_{\Gamma_n}( {\rm
In}(\bigcup\limits_{t\in \{-n,n\}}\mathscr{I}_t(U_{1,1}^n)),\  {\rm
In}(\bigcup\limits_{t\in \{1,2\}}\mathscr{I}_t(V_{1,1}^n)))  \nonumber\\
& & \ \ \ \ +\ \nu_{\Gamma_n} ( {\rm In}(\bigcup\limits_{t\in
\{-n,n\}}\mathscr{I}_t(V_{1,1}^n)),\  {\rm In}(\bigcup\limits_{t\in
\{1,2\}}\mathscr{I}_t(U_{1,1}^n)) )+\nu_{\Gamma_n}( {\rm
In}(\bigcup\limits_{t\in \{-n,n\}}\mathscr{I}_t(V_{1,1}^n)),\  {\rm
In}(\bigcup\limits_{t\in \{1,2\}}\mathscr{I}_t(V_{1,1}^n)) ) \ \big{)}\nonumber\\
\end{eqnarray}

By Inductive rule for the drawing of $E_{red}^n$, Inductive rule for
the drawing of $E_{blue}^n$ and Observation \ref{Observation two
bunches cross}, we have that
\begin{equation} \label{equation Ablue-u and Ared-v}
\nu_{\Gamma_n}\big{(} \ {\rm In}(\bigcup\limits_{t\in
\{-n,n\}}\mathscr{I}_t(U_{1,1}^n)), \ {\rm In}(\bigcup\limits_{t\in
\{1,2\}}\mathscr{I}_t(V_{1,1}^n))\ \big{)}=0,
\end{equation}
and that
\begin{equation} \label{equation Ablue-u and Ared-u}
\nu_{\Gamma_n}\big{(} \ {\rm In}(\bigcup\limits_{t\in
\{-n,n\}}\mathscr{I}_t(U_{1,1}^n)), \ {\rm In}(\bigcup\limits_{t\in
\{1,2\}}\mathscr{I}_t(U_{1,1}^n))\
\big{)}=2\cdot{2^{n-5}+2^{n-6}\choose2}
\end{equation}
where $2\cdot{2^{n-5}+2^{n-6}\choose2}$ crossings are produced
between ${\rm In}(\bigcup\limits_{t\in
\{-n,n\}}\mathscr{I}_t(U_{1,1}^n))$ and $\{{\rm
In}(\mathscr{I}_1(u_{1,j}^n)):j\in [1,2^{n-5}+2^{n-6}]\}$, \\
\\
and that
\begin{equation} \label{equation Ablue-v and Ared-v}
\nu_{\Gamma_n}\big{(}\ {\rm In}(\bigcup\limits_{t\in
\{-n,n\}}\mathscr{I}_t(V_{1,1}^n)), \ {\rm In}(\bigcup\limits_{t\in
\{1,2\}}\mathscr{I}_t(V_{1,1}^n))\ \big{)}=2\cdot{2^{n-5}\choose2}
\end{equation}
where $2\cdot{2^{n-5}\choose2}$ crossings are produced between
$\{{\rm In}(\mathscr{I}_n(v_{1,j}^{n})):j\in [1,2^{n-5}]\}$ and
$\{{\rm In}(\bigcup\limits_{t\in
\{1,2\}}\mathscr{I}_t(v_{1,j}^{n})):j\in [1,2^{n-5}]\}$.

It remains to calculate $\nu_{\Gamma_n}\big{(} \ {\rm
In}(\bigcup\limits_{t\in \{-n,n\}}\mathscr{I}_t(V_{1,1}^n)), \ {\rm
In}(\bigcup\limits_{t\in \{1,2\}}\mathscr{I}_t(U_{1,1}^n)) \
\big{)}$.

By Lemma \ref{Lemma s and t}, we have that
\begin{equation} \label{equation sni1}
s_{n,j\times 2^{n-7}+1}=2\cdot s_{n-1,j\times 2^{n-8}+1}=2^{2}\cdot
s_{n-2,j\times 2^{n-9}+1}=\cdots=2^{n-8}\cdot s_{8,2j+1}
\end{equation}
for all $j\in [0,2^3]$,

 and that \begin{eqnarray}\label{equation sni2}
s_{n,j\times 2^{n-7}}&=&s_{n-1,j\times 2^{n-8}}+s_{n-1,j\times 2^{n-8}+1} \nonumber\\
                       &=&s_{n-2,j\times 2^{n-9}}+s_{n-2,j\times 2^{n-9}+1}+2\cdot s_{n-2,j\times 2^{n-9}+1} \nonumber\\
                       &=&s_{n-2,j\times 2^{n-9}}+(2^{2}-1)\cdot s_{n-2,j\times 2^{n-9}+1} \nonumber\\
                       &=&s_{n-3,j\times 2^{n-10}}+(2^{3}-1)\cdot s_{n-3,j\times 2^{n-10}+1} \nonumber\\
                       &\vdots&\nonumber\\
                       &=&s_{8,2j}+(2^{n-8}-1)\cdot s_{8,2j+1}
\end{eqnarray}
for all $j\in [1,2^3]$.

By \eqref{equation sni1} and Lemma \ref{Lemma s and t}, we have that
\begin{eqnarray} \label{equation sni3}
s_{n,j\times 2^{n-7}+2}&=&s_{n,2\cdot (j\times 2^{n-8}+1)}\nonumber\\
                       &=&s_{n-1,j\times 2^{n-8}+1}+s_{n-1,j\times 2^{n-8}+2} \nonumber\\
                       &=&2\cdot s_{n-2,j\times 2^{n-9}+1}+s_{n-2,j\times 2^{n-9}+1}+s_{n-2,j\times 2^{n-9}+2} \nonumber\\
                       &=&(2^{2}-1)\cdot s_{n-2,j\times 2^{n-9}+1}+s_{n-2,j\times 2^{n-9}+2} \nonumber\\
                       &=&(2^{3}-1)\cdot s_{n-3,j\times 2^{n-10}+1}+s_{n-3,j\times 2^{n-10}+2} \nonumber\\
                       &\vdots&\nonumber\\
                       &=&(2^{n-8}-1)\cdot s_{8,2j+1}+s_{8,2j+2}
\end{eqnarray}
for all $j\in [1,2^3-1]$.

By \eqref{equation sni1}, \eqref{equation sni2} and Lemma \ref{Lemma
s and t}, we have that
\begin{eqnarray} \label{equation sum of sni}
\sum_{j=1}^{2^{n-4}+1}s_{n,i}&=&2\sum_{j=1}^{2^{n-5}+1}s_{n-1,j}+\sum_{j=1}^{2^{n-5}}(s_{n-1,j}+s_{n-1,j+1}) \nonumber\\
                           &=&4\sum_{j=1}^{2^{n-5}+1}s_{n-1,j}-s_{n-1,2^{n-5}+1}-s_{n-1,1} \nonumber\\
                           &=&4\sum_{j=1}^{2^{n-5}+1}s_{n-1,j}-2^{n-9}\cdot s_{8,17}-2^{n-9}\cdot s_{8,1} \nonumber\\
                           &=&4\sum_{j=1}^{2^{n-5}+1}s_{n-1,j}-(s_{8,17}+ s_{8,1})\cdot 2^{n-9} \nonumber\\
                           &=&4^{2}\sum_{j=1}^{2^{n-6}+1}s_{n-2,j}-(s_{8,17}+ s_{8,1})\sum_{j=0}^1 2^{n-9+j} \nonumber\\
                           &\vdots& \nonumber\\
                           &=&4^{n-8}\sum_{j=1}^{2^{4}+1}s_{8,j}-(s_{8,17}+ s_{8,1})\sum_{j=0}^{n-9}2^{n-9+j} \nonumber\\
                           &=&4^{n-8}\sum_{j=1}^{2^{4}+1}s_{8,j}-7\cdot2^{2n-15}+7\cdot2^{n-7}.
\end{eqnarray}

It is easy to verify the following
\begin{table}[htbp]\label{Table 3}
\centering
{\small\begin{tabular}{|c|c|c|c|c|c|c|c|c|c|c|c|c|c|c|c|c|}
  \hline
  % after \\: \hline or \cline{col1-col2} \cline{col3-col4} ...
  $s_{8,1}$ & $s_{8,2}$ & $s_{8,3}$ & $s_{8,4}$  & $s_{8,5}$  & $s_{8,6}$  & $s_{8,7}$  & $s_{8,8}$  & $s_{8,9}$  & $s_{8,10}$ & $s_{8,11}$ & $s_{8,12}$ & $s_{8,13}$ & $s_{8,14}$ & $s_{8,15}$ & $s_{8,16}$ & $s_{8,17}$ \\ \hline
  22 & 20 & 18 & 16 & 14 & 12 & 10 & 10 & 10 & 8 & 6 & 6 & 6 & 6 & 6 & 6 & 6 \\ \hline
  \end{tabular}}
  \caption{\small{The values of $s_{8,j}$ for $j\in [1,2^4+1]$}}
\end{table}

By \eqref{equation sni1}, \eqref{equation sni2}, \eqref{equation sum
of sni}, \eqref{equation definition s i}, \eqref{equation definition
s 2exp(n-4)+1},  Property A , Assertion C, Lemma \ref{Lemma s and t}
and Table 3.1, we conclude that
\begin{eqnarray} \label{equation Ablue-v and Ared-u}
& &\nu_{\Gamma_n}\big{(} \ {\rm In}(\bigcup\limits_{t\in
\{-n,n\}}\mathscr{I}_t(V_{1,1}^n)), \ {\rm In}(\bigcup\limits_{t\in
\{1,2\}}\mathscr{I}_t(U_{1,1}^n)) \
\big{)} \nonumber\\
&=&\nu_{\Gamma_n}\big{(} \ {\rm In}(\mathscr{I}_{-n}(V_{1,1}^n)), \
{\rm In}(\bigcup\limits_{t\in \{1,2\}}\mathscr{I}_t(U_{1,1}^n)) \
\big{)}+\big{(} \ {\rm In}(\mathscr{I}_n(V_{1,1}^n)), \ {\rm
In}(\bigcup\limits_{t\in \{1,2\}}\mathscr{I}_t(U_{1,1}^n)) \
\big{)}\nonumber\\
&=&\big{(}\
\sum_{j=1}^{2^{n-4}}s_{n,j}+(s_{n,2^{n-6}+1}-s_{n,2^{n-6}})+
(s_{n,2^{n-6}+2^{n-7}+1}-s_{n,2^{n-6}+2^{n-7}})+ (s_{n,2^{n-5}+1}-s_{n,2^{n-5}})\nonumber\\
& &\ \ \ +\  (s_{n,2^{n-5}+2^{n-7}+1}-s_{n,2^{n-5}+2^{n-7}}) +
(s_{n,2^{n-5}+2^{n-6}+1}-s_{n,2^{n-5}+2^{n-6}}) \nonumber\\
& &\ \ \ +\ \sum\limits_{j=2^{n-5}+2^{n-6}+2}^{2^{n-4}}(s_{n,j+1}-s_{n,j})\ \big{)}\nonumber\\
& &+\ \big{(}\ 2^{n-5}\cdot s_{n,1}+2^{n-7}\cdot s_{n,2^{n-5}+1}+2^{n-8}\cdot s_{n,2^{n-5}+2^{n-7}+1}+2^{n-8}\cdot s_{n,2^{n-5}+2^{n-6}+1}+2^{n-6}\cdot s_{n,2^{n-4}+1}\ \big{)}\nonumber\\
&=&\big{(}\ \sum_{j=1}^{2^{n-4}+1}s_{n,j}+\sum_{j=2}^{6}s_{n,j\times 2^{n-7}+1}-\sum_{j=2}^{6}s_{n,j\times 2^{n-7}}-s_{n,6\times 2^{n-7}+2}\ \big{)}\nonumber\\
& &+\ \big{(}\ 2^{n-5}\cdot s_{n,1}+2^{n-7}\cdot s_{n,4\times2^{n-7}+1}+2^{n-8}\cdot s_{n,5\times2^{n-7}+1}+2^{n-8}\cdot s_{n,6\times2^{n-7}+1}+2^{n-6}\cdot s_{n,8\times2^{n-7}+1}\ \big{)} \nonumber\\
&=&\big{(}\ (4^{n-8}\sum_{j=1}^{2^{4}+1}s_{8,j}-7\cdot2^{2n-15}+7\cdot2^{n-7})+\sum_{j=2}^{6}2^{n-8}\cdot s_{8,2j+1}-\sum_{j=2}^{6}(s_{8,2j}+(2^{n-8}-1)\cdot s_{8,2j+1}) \nonumber\\
& &\ \ \ -\ ((2^{n-8}-1)\cdot s_{8,13}+s_{8,14})\ \big{)}\nonumber\\
& &+\ \big{(}\ 2^{n-5}\cdot2^{n-8}\cdot s_{8,1}+2^{n-7}\cdot2^{n-8}\cdot s_{8,9}+2^{n-8}\cdot2^{n-8}\cdot s_{8,11}+2^{n-8}\cdot2^{n-8}\cdot s_{8,13}+2^{n-6}\cdot2^{n-8}\cdot s_{8,17}\ \big{)} \nonumber\\
&=&\big{(}\ (182\cdot 2^{2n-16}-7\cdot2^{2n-15}+7\cdot2^{n-7})+46\cdot2^{n-8}-(52+46\cdot(2^{n-8}-1))-(6\cdot(2^{n-8}-1)+6)\ \big{)} \nonumber\\
& &+\ \big{(}\ 22\cdot2^{2n-13}+10\cdot2^{2n-15}+6\cdot2^{2n-16}+6\cdot2^{2n-16}+6\cdot2^{2n-14}\ \big{)} \nonumber\\
%&=&91\cdot2^{2n-15}-2^{2n-12}+2^{n-4}-?+23\cdot2^{n-7}-52-46\cdot(2^{n-8}-1)+29\cdot2^{2n-13} \nonumber\\
&=&25\cdot2^{2n-12}+2^{n-5}-6.
\end{eqnarray}

Therefore, it follows from \eqref{equation E-blue and E-red
general}, \eqref{equation Ablue-u and Ared-v}, \eqref{equation
Ablue-u and Ared-u}, \eqref{equation Ablue-v and Ared-v} and
\eqref{equation Ablue-v and Ared-u} that
\begin{eqnarray} \label{equation E-blue and E-red}
\nu_{\Gamma_n}(E_{blue}^n,E_{red}^n)
&=&8\cdot\big{(}\ 2\cdot{3\cdot2^{n-6}\choose2}+0+2\cdot{2^{n-5}\choose2}+(25\cdot2^{2n-12}+2^{n-5}-6)\ \big{)}\nonumber\\
&=&19\cdot2^{2n-8}-3\cdot2^{n-3}-48.
\end{eqnarray}

\medskip

\textbf{6. Calculation of $\nu_{\Gamma_n}(E_{blue}^n,E_{black}^n)$.}

By Property B and Lemma \ref{lemma counting of crossings}, we have
that \begin{eqnarray} \label{equation E-blue and E-black general}
& &\nu_{\Gamma_n}(E_{blue}^n,E_{black}^n)\nonumber\\
 &=&8\cdot
\nu_{\Gamma_n}\big{(} \ {\rm In}(E_{blue}^n), \ {\rm
In}(E_{black}^n) \ \big{)}\nonumber\\
 &=&8\cdot \nu_{\Gamma_n}\big{(} \ {\rm In}(\bigcup\limits_{t\in
\{-n,n\}}\mathscr{I}_t(U_{1,1}^n\cup V_{1,1}^n)), \ {\rm
In}(E(U_1^n\cup V_1^n)) \ \big{)}\nonumber\\
 &=&8\cdot \big{(} \ \nu_{\Gamma_n}({\rm In}(\bigcup\limits_{t\in
\{-n,n\}}\mathscr{I}_t(U_{1,1}^n)), \ {\rm In}(E(U_1^n\cup
V_1^n)))+\nu_{\Gamma_n}({\rm In}(\bigcup\limits_{t\in
\{-n,n\}}\mathscr{I}_t(V_{1,1}^n)), \ {\rm
In}(E(U_1^n\cup V_1^n))) \ \big{)}\nonumber\\
 &=&8\cdot \big{(} \ \nu_{\Gamma_n}({\rm In}(\bigcup\limits_{t\in
\{-n,n\}}\mathscr{I}_t(U_{1,1}^n)), \ {\rm In}(E(U_1^n\cup
V_1^n)))\nonumber\\
& & \ \ \ \ \ +\ \nu_{\Gamma_n}({\rm In}(\bigcup\limits_{t\in
\{-n,n\}}\mathscr{I}_t(V_{1,1}^n)),\ {\rm In}(E(U_1^n)\cup
E[U_1^n,V_1^n])) +\nu_{\Gamma_n}({\rm In}(\bigcup\limits_{t\in
\{-n,n\}}\mathscr{I}_t(V_{1,1}^n)),\ {\rm In}(E(V_1^n)))\ \big{)}\nonumber\\
\end{eqnarray}

By Lemma \ref{Lemma C+ and C-}, Lemma \ref{Lemma three conclusions
on C+ and C-} and Inductive rule for the drawing of $E_{blue}^n$, we
have that
\begin{eqnarray} \label{equation Ablue-u and E-black}
& &\nu_{\Gamma_n}\big{(}\ {\rm In}(\bigcup\limits_{t\in
\{-n,n\}}\mathscr{I}_t(U_{1,1}^n)), \ {\rm In}(E(U_1^n\cup
V_1^n))\ \big{)}\nonumber\\
&=&\nu_{\Gamma_n}\big{(}\ {\rm In}(\bigcup\limits_{t\in
\{-n,n\}}\mathscr{I}_t(U_{1,1}^n)), \ {\rm In}(E(U_1^n))\ \big{)}\nonumber\\
&=&2\cdot \frac{1}{2}\cdot\mathcal
{C}_{+}^{(n-5)}=\frac{7}{3}\cdot4^{n-4}-(2n-\frac{14}{3}+\frac{7(1+(-1)^n)}{6})\cdot2^{n-5}
\end{eqnarray}
and
\begin{eqnarray} \label{equation Ablue-v and E-black}
& &\nu_{\Gamma_n}\big{(}\ {\rm In}(\bigcup\limits_{t\in\{-n,n\}}\mathscr{I}_t(V_{1,1}^n)),\ {\rm In}(E(V_1^n))\ \big{)}\nonumber\\
&=&\nu_{\Gamma_n}\big{(}\ {\rm In}(\mathscr{I}_{-n}(V_{1,1}^n)),\ {\rm In}(E(V_1^n))\ \big{)}+\nu_{\Gamma_n}\big{(}\ {\rm In}(\mathscr{I}_{n}(V_{1,1}^n)),\ {\rm In}(E(V_1^n))\ \big{)}\nonumber\\
&=&\nu_{\Gamma_n}\big{(}\ {\rm In}(\mathscr{I}_{-n}(V_{1,1}^n)),\ {\rm In}(E(V_1^n))\ \big{)}+\nu_{\Gamma_n}\big{(}\ \{{\rm In}(\mathscr{I}_n(v_{1,j}^n)):j\in [1,2^{n-5}]\},\ {\rm In}(E(V_1^n))\ \big{)}\nonumber\\
& & +\ \nu_{\Gamma_n}\big{(}\ \{{\rm In}(\mathscr{I}_n(v_{1,j}^n)):j\in [2^{n-5}+1,2^{n-4}]\},\ {\rm In}(E(V_1^n))\ \big{)}\nonumber\\
&=&\frac{1}{2}\cdot\mathcal{C}_{-}^{(n-5)}+\sum\limits_{j\in \mathcal{I}_1^{(n-5)}\cup \mathcal{I}_2^{(n-5)}}\mathscr{C}_{+}^{(n-5)}(\mathcal {E}^{(n-5)},z_j^{(n-5)})+\sum\limits_{j\in \mathcal{I}_3^{(n-5)}\cup \mathcal{I}_4^{(n-5)}}\mathscr{C}_{-}^{(n-5)}(\mathcal {E}^{(n-5)},z_j^{(n-5)})\nonumber\\
&=&\frac{5}{3}\cdot2^{2n-9}-(2n-\frac{17}{3}+\frac{7\cdot (1+(-1)^{n-1})}{6})\cdot2^{n-6} \nonumber\\
& &+\ \frac{35}{3}\cdot2^{2n-13}-(4n-\frac{28}{3}+\frac{7\cdot (1+(-1)^{n})}{3})\cdot2^{n-8} \nonumber\\
& &+\ \frac{43}{3}\cdot2^{2n-13}-(4n-\frac{34}{3}+\frac{7\cdot (1+(-1)^{n-1})}{3})\cdot2^{n-8} \nonumber\\
&=&\frac{79}{3}\cdot2^{2n-12}-(4n-\frac{29}{3}+\frac{7\cdot(1+(-1)^{n-1})}{6})\cdot2^{n-6}.
\end{eqnarray}

It remains to calculate $\nu_{\Gamma_n}({\rm
In}(\bigcup\limits_{t\in \{-n,n\}}\mathscr{I}_t(V_{1,1}^n)),\ {\rm
In}(E(U_1^n)\cup E[U_1^n,V_1^n]))$.

Similarly as \eqref{equation sni1}, \eqref{equation sni2},
\eqref{equation sni3} and \eqref{equation sum of sni}, we can derive
that
\begin{equation} \label{equation tni1}
t_{n,j\times 2^{n-7}+1}=2^{n-8}\cdot t_{8,2j+1} \mbox{ \ \ for all }
\ j\in [0,2^3],
\end{equation}

and that
\begin{equation} \label{equation tni2}
t_{n,j\times 2^{n-7}}=t_{8,2j}+(2^{n-8}-1)\cdot
t_{8,2j+1}+2\cdot(n-8) \mbox{ \ \ for all } \ j\in [1,2^3],
\end{equation}

and that
\begin{equation} \label{equation tni3}
t_{n,j\times 2^{n-7}+2}=(2^{n-8}-1)\cdot
t_{8,2j+1}+t_{8,2j+2}+2\cdot(n-8) \mbox{ \ \ for all } \ j\in
[1,2^3-1],
\end{equation}

and that \begin{equation} \label{equation sum of tni}
\sum_{j=1}^{2^{n-4}+1}t_{n,j}=4^{n-8}\sum_{j=1}^{2^{4}+1}t_{8,j}.
\end{equation}

It is easy to verify the following
\begin{table}[htbp]\label{Table 4}
\centering
{\small\begin{tabular}{|c|c|c|c|c|c|c|c|c|c|c|c|c|c|c|c|c|}
  \hline
  % after \\: \hline or \cline{col1-col2} \cline{col3-col4} ...
  $t_{8,1}$ & $t_{8,2}$ & $t_{8,3}$ & $t_{8,4}$  & $t_{8,5}$  & $t_{8,6}$  & $t_{8,7}$  & $t_{8,8}$  & $t_{8,9}$  & $t_{8,10}$ & $t_{8,11}$ & $t_{8,12}$ & $t_{8,13}$ & $t_{8,14}$ & $t_{8,15}$ & $t_{8,16}$ & $t_{8,17}$ \\ \hline
  0 & 10 & 16 & 22 & 24 & 30 & 32 & 34 & 32 & 38 & 40 & 42 & 40 & 42 & 40 & 38 & 32 \\ \hline
  \end{tabular}}
  \caption{\small{The values of $t_{8,j}$ for $j\in [1,2^4+1]$}}
\end{table}

Notice that $$E(U_1^n)\cup E[U_1^n,V_1^n]=H_{black}^n.$$ By
\eqref{equation definition t i}, \eqref{equation definition t
2exp(n-4)+1}, \eqref{equation t' i}, \eqref{equation t'
2exp(n-4)+1}, \eqref{equation t and t'}, Lemma \ref{lemma counting
of crossings}, Property A, Assertion C and Table 3.2, we conclude
that
\begin{eqnarray}\label{equation E blue and E black final}
& &\nu_{\Gamma_n}\big{(}\ {\rm In}(\bigcup\limits_{t\in
\{-n,n\}}\mathscr{I}_t(V_{1,1}^n)), \ {\rm
In}(E(U_1^n)\cup E[U_1^n,V_1^n])\ \big{)} \nonumber\\
&=&\nu_{\Gamma_n}\big{(}\ {\rm In}(\mathscr{I}_{-n}(V_{1,1}^n)),\
{\rm In}(E(U_1^n)\cup E[U_1^n,V_1^n])\ \big{)}\nonumber\\
& &+\ \nu_{\Gamma_n}\big{(}\ {\rm In}(\mathscr{I}_{n}(V_{1,1}^n)),\ {\rm In}(E(U_1^n))\ \big{)}+\nu_{\Gamma_n}\big{(}\ {\rm In}(\mathscr{I}_{n}(V_{1,1}^n)),\ E[U_{1,1}^n,V_{1,1}^n]\ \big{)}\nonumber\\
&=&\big{(}\
\sum_{j=1}^{2^{n-4}}t_{n,j}+(t_{n,2^{n-6}+1}-t_{n,2^{n-6}})+
        (t_{n,2^{n-6}+2^{n-7}+1}-t_{n,2^{n-6}+2^{n-7}})+ (t_{n,2^{n-5}+1}-t_{n,2^{n-5}})\nonumber\\
& &\ \ \ +\ (t_{n,2^{n-5}+2^{n-7}+1}-t_{n,2^{n-5}+2^{n-7}}) +
        (t_{n,2^{n-5}+2^{n-6}+1}-t_{n,2^{n-5}+2^{n-6}}) \nonumber\\
& &\ \ \ +\ \sum\limits_{j=2^{n-5}+2^{n-6}+2}^{2^{n-4}}(t_{n,j+1}-t_{n,j})\ \big{)}\nonumber\\
& &+\ \big{(}\ 2^{n-5}\cdot t_{n,1}^{'}+2^{n-7}\cdot t_{n,2^{n-5}+1}^{'}+2^{n-8}\cdot t_{n,2^{n-5}+2^{n-7}+1}^{'}+2^{n-8}\cdot t_{n,2^{n-5}+2^{n-6}+1}^{'}+2^{n-6}\cdot t_{n,2^{n-4}+1}^{'}\ \big{)}\nonumber\\
& &+\ \big{(}\ 2\cdot{2^{n-7}\choose2}+2\cdot2\cdot{2^{n-8}\choose2}+2\cdot{2^{n-6}\choose2}\ \big{)}\nonumber\\
&=&\big{(}\
\sum_{j=1}^{2^{n-4}}t_{n,j}+(t_{n,2^{n-6}+1}-t_{n,2^{n-6}})+
        (t_{n,2^{n-6}+2^{n-7}+1}-t_{n,2^{n-6}+2^{n-7}})+ (t_{n,2^{n-5}+1}-t_{n,2^{n-5}})\nonumber\\
& &\ \ \ +\ (t_{n,2^{n-5}+2^{n-7}+1}-t_{n,2^{n-5}+2^{n-7}}) +
        (t_{n,2^{n-5}+2^{n-6}+1}-t_{n,2^{n-5}+2^{n-6}}) \nonumber\\
& &\ \ \ +\ \sum\limits_{j=2^{n-5}+2^{n-6}+2}^{2^{n-4}}(t_{n,j+1}-t_{n,j})\ \big{)}\nonumber\\
& &+\ \big{(}\ 2^{n-5}\cdot t_{n,1}+2^{n-7}\cdot t_{n,2^{n-5}+1}+2^{n-8}\cdot t_{n,2^{n-5}+2^{n-7}+1}+2^{n-8}\cdot t_{n,2^{n-5}+2^{n-6}+1}+2^{n-6}\cdot t_{n,2^{n-4}+1}\ \big{)}\nonumber\\
& &+\ \big{(}\ 2\cdot{2^{n-7}\choose2}+2\cdot2\cdot{2^{n-8}\choose2}+2\cdot{2^{n-6}\choose2}\ \big{)}\nonumber\\
&=&\big{(}\ \sum_{j=1}^{2^{n-4}+1}t_{n,j}+\sum_{j=2}^{6}t_{n,j\times 2^{n-7}+1}-\sum_{j=2}^{6}t_{n,j\times 2^{n-7}}-t_{n,6\times 2^{n-7}+2}\ \big{)}\nonumber\\
& &+\ \big{(}\ 2^{n-7}\cdot t_{n,4\times 2^{n-7}+1}+2^{n-8}\cdot t_{n,5\times 2^{n-7}+1}+2^{n-8}\cdot t_{n,6\times 2^{n-7}+1}+2^{n-6}\cdot t_{n,8\times 2^{n-7}+1}\ \big{)}\nonumber\\
& &+\ \big{(}\ 11\cdot2^{2n-15}-2^{n-5}\ \big{)}\nonumber\\
&=&\big{(}\ 4^{n-8}\sum_{j=1}^{2^{4}+1}t_{8,j}+\sum_{j=2}^{6}2^{n-8}\cdot t_{8,2j+1}-\sum_{j=2}^{6}(t_{8,2j}+(2^{n-8}-1)\cdot t_{8,2j+1}+2\cdot(n-8))\nonumber\\
& &\ \ \ -\ ((2^{n-8}-1)\cdot t_{8,13}+t_{8,14}+2\cdot (n-8))\ \big{)}\nonumber\\
& &+\ \big{(}\ 2^{n-7}\cdot2^{n-8}\cdot t_{8,9}+2^{n-8}\cdot2^{n-8}\cdot t_{8,11}+2^{n-8}\cdot2^{n-8}\cdot t_{8,13}+2^{n-6}\cdot2^{n-8}\cdot t_{8,17}\ \big{)}\nonumber\\
& &+\ \big{(}\ 11\cdot2^{2n-15}-2^{n-5}\ \big{)}\nonumber\\
&=&\big{(}\ 512\cdot2^{2n-16}+168\cdot2^{n-8}-(166+168\cdot(2^{n-8}-1)+10\cdot(n-8))\nonumber\\
& &\ \ \ -\ (40\cdot (2^{n-8}-1)+42+2\cdot (n-8))\ \big{)}\nonumber \\
& &+\ \big{(}\ 32\cdot2^{2n-15}+40\cdot2^{n-16}+40\cdot2^{2n-16}+32\cdot2^{n-14}\ \big{)}\nonumber\\
& &+\ \big{(}\ 11\cdot2^{2n-15}-2^{n-5}\ \big{)}\nonumber\\
&=&403\cdot2^{2n-15}-3\cdot2^{n-4}-12n+96.
%&=&49\cdot2^{2n-12}-5\cdot2^{n-5}-12n+96.
\end{eqnarray}

Therefore, it follows from \eqref{equation E-blue and E-black
general}, \eqref{equation Ablue-u and E-black}, \eqref{equation
Ablue-v and E-black} and \eqref{equation E blue and E black final}
that
\begin{eqnarray} \label{equation E-blue and E-black}
\nu_{\Gamma_n}(E_{blue}^n,E_{black}^n)
&=&8\cdot\big{(}\ \frac{7}{3}\cdot4^{n-4}-(2n-\frac{14}{3}+\frac{7(1+(-1)^n)}{6})\cdot2^{n-5}\nonumber\\
& &\ \ \ \ \ +\ 403\cdot2^{2n-15}-3\cdot2^{n-4}-12n+96 \nonumber\\
& &\ \ \ \ \ +\ \frac{79}{3}\cdot2^{2n-12}-(4n-\frac{29}{3}+\frac{7\cdot(1+(-1)^{n-1})}{6})\cdot2^{n-6}\ \big{)}\nonumber\\
&=&8\cdot\big{(}\ \frac{2737}{3}\cdot2^{2n-15}-(8n-\frac{14}{3}+\frac{7\cdot (1+(-1)^n)}{6})\cdot 2^{n-6}-12n+96\ \big{)}\nonumber\\
&=&\frac{2737}{3}\cdot2^{2n-12}-(8n-\frac{14}{3}+\frac{7\cdot
(1+(-1)^n)}{6})\cdot 2^{n-3}-96n+768.
\end{eqnarray}

By \eqref{equation E-blue}, \eqref{equation E-red}, \eqref{equation
general E-black}, \eqref{equation E-blue and E-red}, \eqref{equation
E-red and E-black} and \eqref{equation E-blue and E-black}, for
$n\geq 8$, we have that
$$\begin{array}{rlll}
\nu_{\Gamma_n}(AQ_n)&=&\nu_{\Gamma_n}(E_{blue}^n)+\nu_{\Gamma_n}(E_{red}^n)+\nu_{\Gamma_n}(E_{black}^n)\\
& &+\ \nu_{\Gamma_n}(E_{red}^n, E_{blue}^n)+\nu_{\Gamma_n}(E_{red}^n, E_{black}^n)+\nu_{\Gamma_n}(E_{blue}^n, E_{black}^n)\\
&=&\big{(}\ 11\cdot2^{2n-13}+2^{n-3}\ \big{)} \\
& &+\ \big{(}\ 71\cdot2^{2n-10}-5\cdot2^{n-4}\ \big{)} \\
& &+\ \big{(}\ 59\cdot 2^{2n-8}-\big{(}4n^2-9n-6\big{)}\cdot
2^{n-3}-7\cdot(1+(-1)^{n-1})\cdot2^{n-4}\ \big{)} \\
& &+\ \big{(}\ 19\cdot2^{2n-8}-3\cdot2^{n-3}-48 \ \big{)}  \\
& &+\ \big{(}\ 389\cdot2^{2n-11}-(n-1)\cdot2^{n}+7\cdot(1+(-1)^{n-1})\cdot2^{n-5}\ \big{)} \\
& &+\ \big{(}\ \frac{2737}{3}\cdot2^{2n-12}-(8n-\frac{14}{3}+\frac{7\cdot(1+(-1)^n)}{6})\cdot 2^{n-3}-96n+768\ \big{)} \\
&=&\frac{19367}{3}\cdot 2^{2n-13}-(8n^2+14n-\frac{85}{3})\cdot 2^{n-4}-(\frac{28}{3}+\frac{7\cdot (1+(-1)^{n-1})}{3})\cdot2^{n-5}-96n+720\\
&=&\frac{19367}{3}\cdot 2^{2n-13}-(8n^2+14n-\frac{71}{3})\cdot 2^{n-4}-\frac{7\cdot (1+(-1)^{n-1})}{3}\cdot2^{n-5}-96n+720\\
&<&\frac{19367}{3}\cdot 2^{2n-13}-(8n^2+14n-\frac{71}{3})\cdot 2^{n-4}\\
&<&\frac{19368}{3}\cdot 2^{2n-13}-(8n^2+14n-\frac{72}{3})\cdot 2^{n-4}\\
&=&807\cdot 4^{n-5}-(4n^2+7n-12)\cdot 2^{n-3}\\
&<&\frac{26}{32}\cdot4^{n}-(2n^2+\frac{7}{2}n-6)\cdot2^{n-2}. \\
\end{array}$$
This completes the proof of Theorem \ref{Theorem Upper Bound for
general n}. \qed

\section{Concluding remarks}

In this section, we make a study of the crossing number of $AQ_n$
for $n\leq 7$. It is clear that $cr(AQ_1)=cr(AQ_2)=0$. In Figure 4.1
(1), we show a drawing  of $AQ_3$ with 4 crossings, it means
$cr(AQ_3)\leq 4$. Since $AQ_3$ contains a subgraph isomorphic to
$K_{4,4}$, we have $cr(AQ_3)\geq cr(K_{4,4})=4$. Hence,
\begin{proposition}
$cr(AQ_3)=4$.
\end{proposition}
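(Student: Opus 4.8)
The plan is to establish the claimed value by sandwiching $cr(AQ_3)$ between matching upper and lower bounds, both of which are already signalled in the surrounding text.

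For the upper bound, I would simply exhibit a concrete good drawing of $AQ_3$ in which exactly four pairs of edges cross; this is the drawing displayed in Figure 4.1(1). Since $cr(AQ_3)$ is by definition the minimum over all good drawings, producing a single drawing with four crossings immediately yields $cr(AQ_3)\le 4$. The only thing to verify here is that the displayed drawing is indeed \emph{good} (no edge crosses itself, adjacent edges do not cross, no two edges cross more than once, and no three edges meet at a point) and that a careful count of its crossings gives exactly four; this is a routine inspection.

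For the lower bound, the key step is to locate a subgraph of $AQ_3$ isomorphic to $K_{4,4}$ and then invoke monotonicity of the crossing number under taking subgraphs together with the known value $cr(K_{4,4})=4$. Concretely, I would partition $V(AQ_3)$ according to the parity of the Hamming weight, setting $A=\{000,011,101,110\}$ (even weight) and $B=\{001,010,100,111\}$ (odd weight). Using the edge description of $AQ_3$ read off from Definition \ref{definition augmented cube} (the two copies of $AQ_2\cong K_4$ are complete, and each vertex of one copy is joined to the two vertices of the other copy that either agree with it or are its coordinate-wise complement on the last two bits), I would check that every one of the sixteen pairs in $A\times B$ is an edge of $AQ_3$. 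Since $H\subseteq G$ implies $cr(H)\le cr(G)$, this gives $cr(AQ_3)\ge cr(K_{4,4})=4$, where the value $cr(K_{4,4})=4$ is the one predicted by the Zarankiewicz formula, which is known to be correct for $K_{m,n}$ with $\min\{m,n\}\le 6$.

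Combining the two inequalities yields $cr(AQ_3)=4$. The only genuine obstacle is the middle step, namely correctly identifying the $K_{4,4}$; but this reduces to the pleasant observation that the weight-parity bipartition of the eight vertices of $AQ_3$ induces a complete bipartite graph. Once the adjacency list of $AQ_3$ is written out explicitly, this verification is immediate, and the remainder of the argument is entirely standard.
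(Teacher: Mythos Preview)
Your proposal is correct and follows exactly the same route as the paper's own proof: an explicit drawing with four crossings for the upper bound, and the inequality $cr(AQ_3)\ge cr(K_{4,4})=4$ via a $K_{4,4}$ subgraph for the lower bound. Your extra detail---the explicit even/odd-weight bipartition $A=\{000,011,101,110\}$, $B=\{001,010,100,111\}$ and the check that all sixteen cross pairs are edges---is accurate and simply fleshes out what the paper asserts in one sentence.
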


\begin{figure}[ht]
\centering
\includegraphics[scale=1.0]{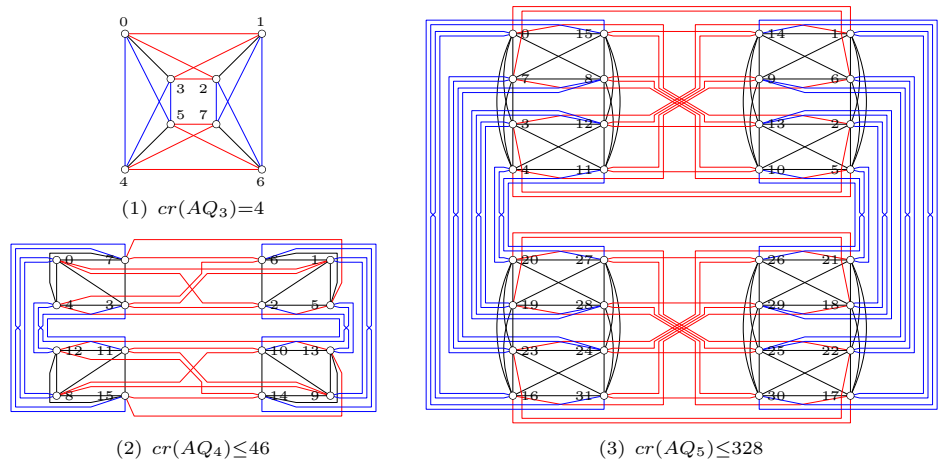}
\caption{\small{Some drawings of $AQ_3$, $AQ_4$ and $AQ_5$}}
\end{figure}

Meanwhile, in Figure 4.1 (2)-(3), Figure 4.2 and Figure 4.3 we show
the drawings of $AQ_4,AQ_5,AQ_6$ and $AQ_7$ with 46, 328, 1848 and
9112 crossings, respectively. We remark that the calculations of
crossings in the following drawings which are given in Figures
4.1-4.3 are similar to $\nu_{\Gamma_n}(AQ_n)$ for $n\geq 8$, and
omit them here. Hence, we have the following
\begin{proposition}
$cr(AQ_4)\leq 46$, $cr(AQ_5)\leq 328$, $cr(AQ_6)\leq 1848$,
$cr(AQ_7)\leq 9112$.
\end{proposition}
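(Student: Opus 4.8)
The statement asserts four upper bounds, so the plan is purely constructive: for each $n\in\{4,5,6,7\}$ I would exhibit one explicit good drawing of $AQ_n$ and verify that its number of crossings equals the claimed value (these drawings are exactly what Figures~4.1--4.3 record). Since $cr(G)$ is the minimum over all good drawings, producing a single good drawing of $AQ_n$ with $c$ crossings certifies $cr(AQ_n)\le c$, so no lower-bound argument is needed here; the entire task reduces to drawing and counting.

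For $n\in\{5,6,7\}$ I would reuse as much of the $\Gamma_n$ machinery as remains valid. The vertex partition into $U_1^n,\dots,V_4^n$ and the edge decomposition $E(AQ_n)=E_{black}^n\cup E_{red}^n\cup E_{blue}^n$ in \eqref{equation decomposition of edges of AQn} already hold for $n\ge5$, and each block $\langle\mathcal W^n\rangle$ can be drawn as $\Upsilon_{(n-5)}$ through \eqref{equation value of z1-z4}--\eqref{equation location Y}, whose internal crossing count $\nu_{\Upsilon_{(n-5)}}(\mathcal E^{(n-5)})$ is supplied by Lemma~\ref{Lemma crossing of E-black}. I would then route $E_{red}^n$ (dimensions $1,2$) and $E_{blue}^n$ (dimensions $\pm n$) by hand, respecting the $X$/$Y$-axis and $y=\tfrac52$ symmetries and Property~B, and count the six terms of \eqref{equation crossings of AQn} exactly as in Subsection~3.2, using Lemma~\ref{lemma counting of crossings} to split each count and Observation~\ref{Observation two bunches cross} for the bunch-versus-bunch crossings. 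The smallest case $AQ_4$ has only $16$ vertices, so I would draw it directly (as in Figure~4.1) and simply tally the crossings, without invoking the block apparatus at all.

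The crux is to explain why the closed form for $\nu_{\Gamma_n}(AQ_n)$ cannot be evaluated at these small $n$. The inductive rules for $E_{red}^n$ and $E_{blue}^n$, together with the relations \eqref{equation orginial for uij} and \eqref{equation orginial for vij}, express the drawing of $AQ_n$ as an inflation $\Omega^{(n-8)}$ of the base drawing $\Gamma_8$; this presupposes $n\ge8$, and for $n<8$ there is no such base on which to anchor the recursion. Consequently each small drawing must be constructed independently, and the genuine difficulty is bookkeeping rather than theory: I must confirm that every figure is a \emph{good} drawing (no edge self-crossing, no adjacent edges crossing, no two edges crossing twice, no triple points) and that each crossing is counted once and only once across the six cross-terms. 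Because the totals are modest, I expect this to be most reliably discharged by drawing each figure explicitly and checking the count by hand or by a short computer verification, rather than by appealing to any uniform formula.
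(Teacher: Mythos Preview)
Your proposal is correct and matches the paper's own treatment: the paper simply exhibits explicit drawings of $AQ_4,\dots,AQ_7$ in Figures~4.1--4.3 and states that the crossing counts (46, 328, 1848, 9112) are obtained by calculations ``similar to $\nu_{\Gamma_n}(AQ_n)$ for $n\ge 8$'', omitting the details entirely. One small caveat: you cite Lemma~\ref{Lemma crossing of E-black} for $\nu_{\Upsilon_{(n-5)}}(\mathcal E^{(n-5)})$, but that lemma is stated only for $m\ge 3$, so for $n\in\{5,6,7\}$ you would instead use the directly computed small values (e.g.\ $\nu_{\Upsilon_{(1)}}(\mathcal E^{(1)})=4$ from \eqref{equation nu(E(1))=4}); since you already plan to fall back on explicit drawing and hand verification, this does not affect the validity of your approach.
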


In the final of this paper, by just applying the same technique of
congestions proposed by Leighton \cite{L81}, we can obtain the following lower bound:
$$cr(AQ_n)>\frac{4^n}{5\times(1+2^{2-n})^2}-(4n^2+4n+\frac{17}{5})2^{n-1}.$$

\newpage

\begin{figure}[h]
\centering \vspace{100pt}
\includegraphics[scale=1.0]{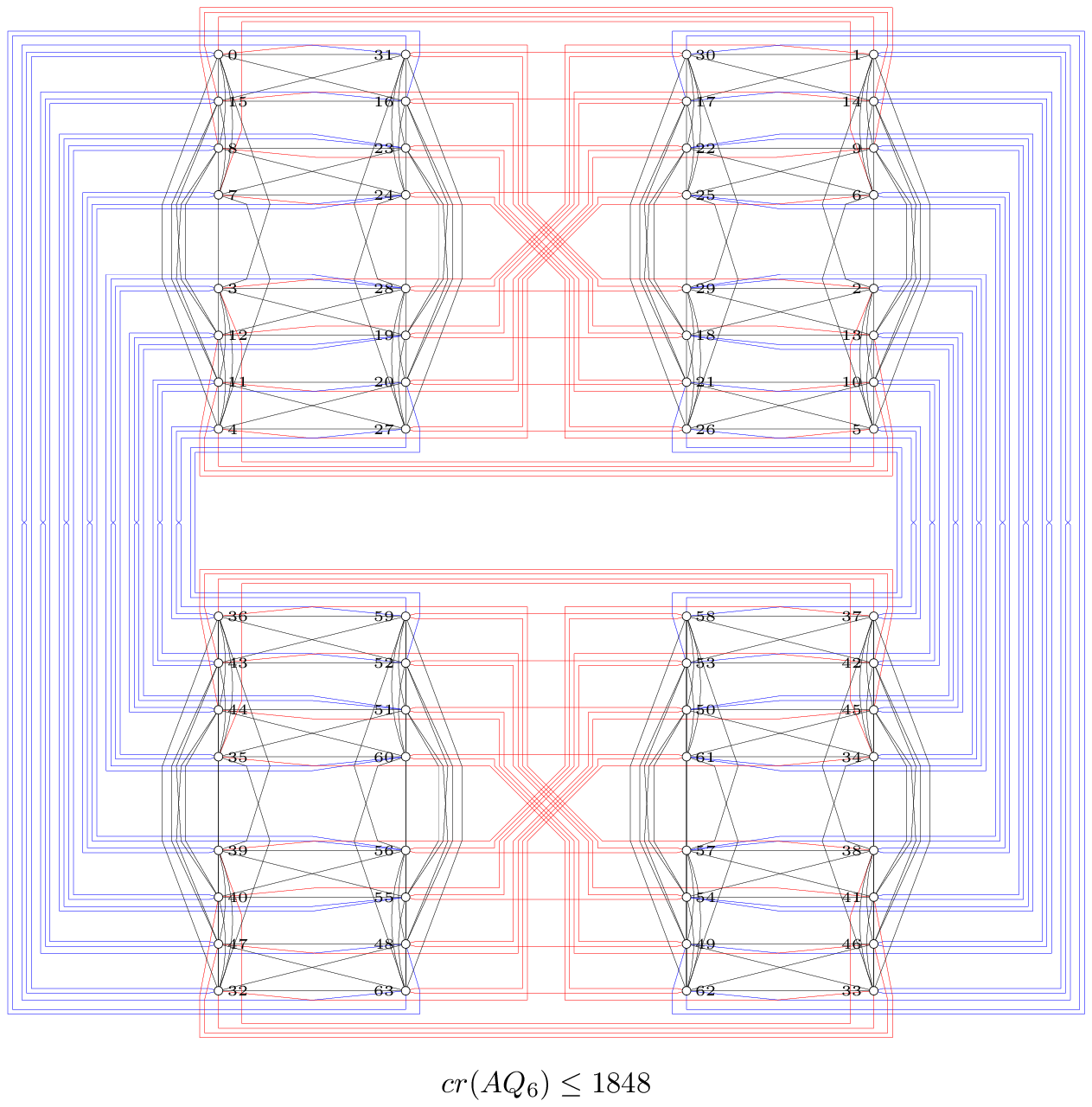}
\caption{\small{A drawing of $AQ_6$ with 1848 crossings}}
\end{figure}

\newpage

\begin{figure}[h]
\centering
\includegraphics[scale=1.0]{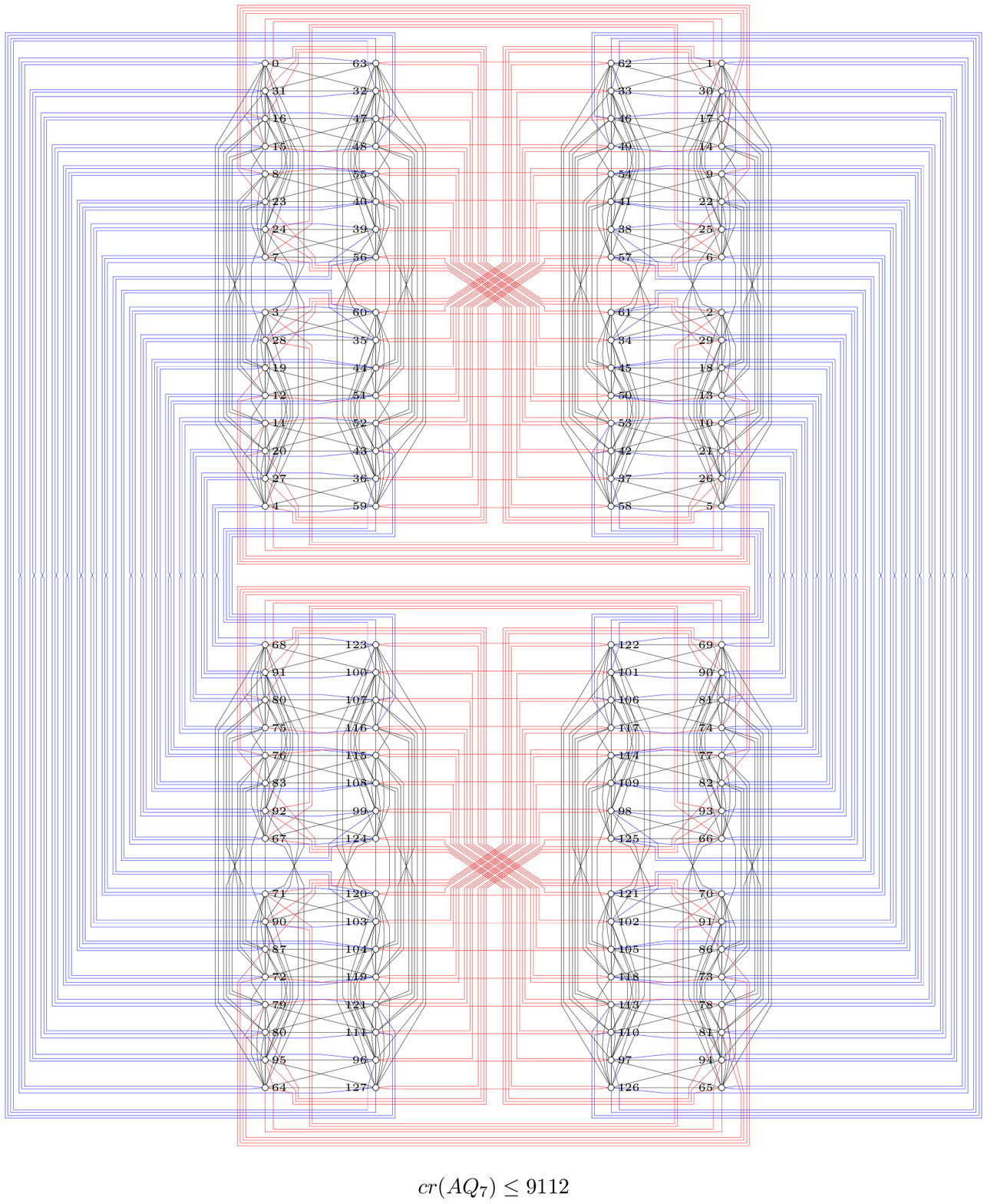}
\caption{\small{A drawing of $AQ_7$ with 9112 crossings}}
\end{figure}

\newpage

\end{document}